\def\Var{{\rm Var}}
\DeclareMathAlphabet{\mathonebb}{U}{bbold}{m}{n}
\newcommand{\1}{\ensuremath{\mathonebb{1}}}
\def\1{1\hskip-2.6pt{\rm l}}
\def\N{{\mathbb{N}}}
\def\Z{{\mathbb{Z}}}
\def\M{{\mathbb{M}}}
\def\R{{\mathbb{R}}}
\def\C{{\mathbb{C}}}
\def\E{{\mathbb{E}}}
\def\P{{\mathbb{P}}}
\def\IL{{\mathbb{L}}}
\def\D{{\mathcal D}}
\def\m{{{\rm m}}}
\def\VV{{\mathcal V}}
\def\m{{\mathfrak{m}}}
\def\SS{{\mathbb{S}}}
\def\gT{{\mathbf{T}}}
\def\B{{{\mathcal B}}}
\def\F{{\mathcal{F}}}
\def\X{{\mathcal{X}}}
\def\T{{\mathcal T}}
\def\M{\mathcal M}
\def\PP{\mathcal P}
\def\EE{{\mathcal E}}
\def\LL{{\mathcal L}}
\def\RR{{\mathcal R}}
\def\S{{\mathcal S}}
\def\A{{\mathcal A}}
\def\CC{\mathcal C}
\def\crit{{\rm crit}}
\def\eps{{\varepsilon}}
\def\<{{\langle}}
\def\>{{\rangle}}
\newcommand{\pen}{\mathop{\rm pen}\nolimits}
\newcommand{\eref}[1]{(\ref{#1})}
\newcommand{\pa}[1]{\left({#1}\right)}
\newcommand{\norm}[1]{\left\|{#1}\right\|}
\newcommand{\cro}[1]{\left[{#1}\right]}
\newcommand{\ab}[1]{\left|{#1}\right|}
\newcommand{\ac}[1]{\left\{{#1}\right\}}
\newtheorem{thm}{Theorem}
\newtheorem{lemma}{Lemma}
\newtheorem{prop}{Proposition}
\newtheorem{cor}{Corollary}
\newtheorem{defi}{Definition}
\newtheorem{ass}{Assumption}
\newtheorem{ex}{Example}
\newtheorem{pb}{Problem}
\newtheorem{proc}{Procedure}
\def\A{{\mathcal A}}
\def\PP{\mathcal P}
\def\EE{{\mathcal E}}
\def\S{{\mathcal S}}
\def\M{\mathcal M}
\def\CC{\mathcal C}
\def\telque{\big |}
\begin{document}
\title[Estimator selection]{Estimator selection with respect to Hellinger-type risks}
\date{May, 9 2009} 
\author{Yannick Baraud} 
\address{Universit\'e de Nice Sophia-Antipolis, Laboratoire J-A Dieudonn\'e,
  Parc Valrose, 06108 Nice cedex 02} 
\email{baraud@unice.fr}
\keywords{Estimator selection - Model selection - Variable selection - $T$-estimator- Histogram - Estimator aggregation - Hellinger loss} 
\subjclass[2000]{Primary 62G05; Secondary 62N02, 62M05, 62M30, 62G07}
\begin{abstract}
We observe a random measure $N$ and aim at estimating its intensity $s$. This statistical framework allows to deal simultaneously with the problems of estimating a density, the marginals of a multivariate distribution, the mean of a random vector with nonnegative components and the intensity of a Poisson process. Our estimation strategy is based on estimator selection. Given a family of estimators of $s$ based on the observation of $N$, we propose a selection rule, based on $N$ as well, in view of selecting among these. Little assumption is made on the collection of estimators. The procedure offers the possibility to perform model selection and also to select among estimators associated to different model selection strategies. Besides, 
it provides an alternative to the $T$-estimators as studied recently in Birg\'e~\citeyearpar{MR2219712}. For illustration, we consider the problems of estimation and (complete) variable selection in various regression settings. 
\end{abstract}
\maketitle
%
\section{Introduction}\label{sect:I}
We consider $k$  independent random measures $N_{1},\ldots,N_{k}$ where the $N_{i}$ are defined on an abstract probability space $(\Omega,\T,\Bbb{P})$ with values in the class of positive measures on measured spaces $(\X_{i},\A_{i},\mu_{i})$. We assume that 
\begin{equation}
\Bbb{E}[N_{i}(A)]=\int_{A} s_{i}d\mu_{i}<+\infty,\quad\mbox{for all }A\in\A_{i}\ {\rm and\ all}\ \ i=1,\ldots,k
\label{Eq-fond}
\end{equation}
where each $s_{i}$ is a nonnegative and measurable function on $\X_{i}$ that we shall call the intensity of $N_{i}$. Equality~\eref{Eq-fond} implies that the $N_{i}$ are a.s. finite measures and that for all measurable and nonnegative functions $f_{i}$ on $\X_{i}$, 
\begin{equation}\label{Eq-f}
\E\cro{\int_{\X_{i}} f_{i}dN_{i}}=\int_{\X_{i}} f_{i}s_{i}d\mu_{i}.
\end{equation}
Our aim is to estimate $s=(s_{1},\ldots,s_{k})$ from the
observation of  $N=(N_{1},\ldots,N_{k})$. We shall set $\X=(\X_{1},\ldots,\X_{k})$, $\A=(\A_{1},\ldots,\A_{k})$, $\mu=(\mu_{1},\ldots,\mu_{k})$
and denote by $\LL$ the cone of nonnegative and measurable functions $t$ of the form $(t_{1},\ldots,t_{k})$ where the $t_{i}$ are positive and integrable functions on $\pa{\X_{i},\A_{i},\mu_{i}}$. For $f=(f_{1},\ldots,f_{k})\in\LL$, we use the notations
\begin{eqnarray*}
\int_{\X}fdN &=& \sum_{i=1}^{k}\int_{\X_{i}}f_{i}dN_{i}\ \ {\rm and}\ \ \int_{\X}fd\mu = \sum_{i=1}^{k}\int_{\X_{i}}f_{i}d\mu_{i}.
\end{eqnarray*}

Throughout, $\LL_{0}$ denotes a known subset of $\LL$ which we assume to contain $s$. This statistical framework we have described allows to deal simultaneously with the more classical ones given below:

\begin{ex}[Density Estimation]\label{ex-densite}
Consider the problem of estimating a density $s$ on $(\X,\A,\mu)$ from the observation of an $n$-sample $X_1,\ldots,X_n$ with distribution $P_{s}=sd\mu$. In order to handle this problem, we shall take $k=1$, $\LL_{0}$ the set of densities on $\pa{\X,\A}$ with respect to $\mu$ and $N=n^{-1}\sum_{i=1}^{n}\delta_{X_{i}}$.
\end{ex}

\begin{ex}[Estimation of marginals]\label{ex-marginal}
Let $X_{1},\ldots,X_{n}$ be independent random variables with values in the measured spaces $(\X_{1},\A_{1},\mu_{1}),\ldots,(\X_{n},\A_{n},\mu_{n})$ respectively. We assume that for all $i$, $X_{i}$ admits a density $s_{i}$ with respect to $\mu_{i}$ and our aim is to estimate $s=(s_{1},\ldots,s_{n})$ from the observation of of $X=(X_{1},\ldots,X_{n})$. 
We shall deal with this problem by taking $k=n$ and $N_{i}=\delta_{X_{i}}$ for $i=1,\ldots,n$. Note that this setting includes as a particular case that of the regression framework
\[
X_{i}=f_{i}+\eps_{i},\ i=1,\ldots,n
\] 
where the $\eps_{i}$ are i.i.d. random variables with a known distribution. The problem of estimating the densities of the $X_{i}$ then amounts to estimating the shift parameter $f=(f_{1},\ldots,f_{n})$. 
\end{ex}

\begin{ex}[Estimating the intensity of a Poisson process]\label{ex-poisson}
Consider the problem of estimating the intensity $s$ of a possibly inhomogeneous Poisson process $N$ on a measurable space $(\X,\A)$. We shall assume that $s$ is integrable. This statistical setting is a particular case of our general one by taking $k=1$ and $\LL_{0}=\LL$. 
\end{ex}
Other examples will be introduced later on. 

Throughout, we shall deal with estimators with values in  $\LL_{0}$ and to measure their risks, endow $\LL_{0}$ with the distance $H$  defined for $t,t'$ in  $\LL_{0}$ by 
\[
H^2(t,t')={1\over 2}\int_{\X}\pa{\sqrt{t}-\sqrt{t'}}^2d\mu={1\over 2}\sum_{i=1}^{k}\int_{\X_{i}}\pa{\sqrt{t_{i}}-\sqrt{t'_{i}}}^2d\mu_{i}.
\]
When $k=1$ and $t,t'$ are densities with respect to $\mu$, $H$ is merely the Hellinger distance between the corresponding probabilities. Given an  estimator $\hat{s}$ of $s$, i.e.\ a measurable function of $N$ with $\hat{s}\in\LL_{0}$, we define its risk by $\E\left[H^2(s,\hat{s})\right]$.  

Let us now give an account of our estimation strategy. We consider an at most countable family $\ac{S_{m},\ m\in\M}$ of subsets of $\LL_{0}$, that we shall call {\it models}, and a family of positive weights $\ac{\Delta_{m}, m\in\M}$ on these satisfying 
\[
\Sigma=\sum_{m\in\M}e^{-\Delta_{m}}<+\infty.
\]
When $\Sigma=1$, the $\Delta_{m}$ define a prior distribution on the family of models and give thus a Bayesian flavor to the procedure. Then, we assume that we have at disposal a collection $\ac{\hat s_{\lambda},\ \lambda\in \Lambda}$ of estimators of $s$ based on $N$ with values in $\SS=\bigcup_{m\in\M}S_{m}$. We mean that each estimator $\hat s_{\lambda}$ belongs to some $S_{m}$ among the family, the index $m=\hat m(\lambda)$ being possibly random depending on the observation $N$. The index set $\Lambda$ need not be countable even though we shall assume so in order to avoid measurability problems. However, the reader can check that the cardinality of $\Lambda$ will play no role in our results. Our aim is to select some $\hat \lambda$ among $\Lambda$, on the basis of the same observation $N$, in such a way that the risk of estimator $\tilde s=\hat s_{\hat \lambda}$ is as close as possible to $\inf_{\lambda\in\Lambda}\E\cro{H^{2}\pa{s,\hat s_{\lambda}}}$. More precisely,  the results we get have the following form
\begin{equation}\label{oracle}
C\E\cro{H^{2}(s,\tilde s)}\le \inf_{\lambda\in\Lambda}\ac{\E\cro{H^{2}\pa{s,\hat s_{\lambda}}}+\tau\E\cro{D_{\hat m(\lambda)}\vee \Delta_{\hat m(\lambda)}}}+\tau\Sigma^{2},
\end{equation}
where 
\begin{itemize}
\item the number $C$ is a positive universal constant;
\item the number $\tau$ is a scaling parameter depending on the statistical framework ($\tau=1/n$ in the density case and $\tau=1$ in the case of Example~\ref{ex-marginal});
\item the numbers $D_{m}$ measure the massiveness (in some suitable sense) of the models $S_{m}$ (typically, $D_{m}$ corresponds to its metric dimension to be defined later on).
\end{itemize}
In Inequality~\eref{oracle}, the element $\hat m(\lambda)$ corresponds to an arbitrary element (chosen by the statistician) among the random subset $\M(\hat s_{\lambda})$ defined by 
\[
\M(\hat s_{\lambda})=\ac{m\in\M,\ \hat s_{\lambda}\in S_{m}}.
\]
Of course, a minimizer of $D_{m}\vee \Delta_{m}$ among those $m$ in $\M(\hat s_{\lambda})$ provides a natural choice for $\hat m(\lambda)$ since it minimizes the right-hand side of~\eref{oracle}. Other choices are possible. For example if for some deterministic $m\in\M$, $\hat s_{\lambda}$ belongs to some $S_{m}$ with probability one, it is convenient to take $\hat m(\lambda)=m$. This is in general the case in the context of model selection for which one associates to each model $S_{m}$ a single estimator, denoted $\hat s_{m}$ rather than $\hat s_{\lambda}$, with values in $S_{m}$. Then, by taking $\Lambda=\M$, Inequality~\eref{oracle} takes the more usual form
\begin{equation}\label{mstypique}
C\E\cro{H^{2}(s,\tilde s)}\le \inf_{m\in\M}\ac{\E\cro{H^{2}\pa{s,\hat s_{m}}}+\tau \pa{D_{m}\vee \Delta_{m}\vee 1}}
\end{equation}
where $C$ depends on $\Sigma$ only. 

In the present paper, our purpose is to go beyond the classical model selection scheme by allowing the family of estimators to take their values in a random model, depending on $N$, among the collection $\ac{S_{m},\ m\in\M}$. Using the same observation $N$, our selection procedure is based on a comparison pair by pair of the estimators $\hat s_{\lambda}$. We do so by mean of a penalized criterion based on an estimation of the distance $H$ of each estimator to the true $s$. From these pairwise comparisons, we use the selection device inspired from Birg\'e~\citeyearpar{MR2219712} and Baraud and Birg\'e~\citeyearpar{MR2449129} to select our estimator $\tilde s$ among the family $\ac{\hat s_{\lambda},\ \lambda\in\Lambda}$. 

Because of these comparisons pair by pair, our procedure is all the more difficult to implement that the cardinality of $\Lambda$ is large. For example, if one tries to estimate a density by an histogram and aims at finding a ``good" partition among a family $\Lambda$ of candidate ones, these comparisons will be time consuming and practically almost useless if $|\Lambda|$ is too large. Nevertheless, one can take advantage that our procedure allows to deal with random partitions $m$ in view of reducing the family $\Lambda$ to those $m$ selected from the data by an appropriate algorithm such as CART for example. From this point of view, our approach can be seen (at least theoretically) as an alternative to resampling procedures (such as V-fold cross-validation, bootstrap,...).

The starting point of this paper originates from a series of papers by Lucien Birg\'e (Birg\'e~\citeyearpar{MR2219712}, Birg\'e~\citeyearpar{Birge-Poisson} and Birg\'e~\citeyearpar{Birge-L2}) providing a new perspective on estimation theory. His approach relies on ideas borrowed from old papers by Le Cam~\citeyearpar{MR0334381}, Le Cam~\citeyearpar{MR0395005}, Birg\'e~\citeyearpar{MR722129}, Birg\'e~\citeyearpar{MR764150}, Birg\'e~\citeyearpar{MR762855}, showing how to derive good estimators  from families of robust tests between simple hypotheses, and also more recent ones about complexity and model selection such as Barron and Cover~\citeyearpar{MR1111806} and Barron, Birg\'e and Massart~\citeyearpar{MR1679028}. The resulting estimator is called a $T$-estimator ($T$ for test) and its construction, detailed in Birg\'e~\citeyearpar{MR2219712}, relies on a good discretization of the models. A nice feature of those $T$-estimators lies in the fact that they require very few assumptions on the collections of models and the parameter set. Our general approach is inspired by this paper even though the procedure we propose is different and allows to consider estimators instead of only discretization points.  

The problem of designing a selection rule solely based on the data in order to choose a ``good" model among a collection of candidate ones is the art of model selection. This approach has been intensively studied in the recent years. For example, Castellan~\citeyearpar{Castellan00}, Castellan~\citeyearpar{Castellan1}, Birg\'e~\citeyearpar{Birge-L2}, Massart~\citeyearpar{MR2319879} (Chapter 7) considered the problem of estimating a density, Reynaud-Bouret~\citeyearpar{MR1981635} and Birg\'e~\citeyearpar{Birge-Poisson} that of estimating the intensity of a Poisson process, and the regression setting has been studied in Baraud~\citeyearpar{MR1777129},  Birg\'e and Massart~\citeyearpar{MR1848946} and Yang~\citeyearpar{Yang99} among other references. Performing model selection for the problem of selecting among histogram-type estimators in the statistical frameworks described in Examples~\ref{ex-densite} and~\ref{ex-poisson} (among others) has been considered in Baraud and Birg\'e~\citeyearpar{MR2449129}. A common feature of all these results on model selection lies in the fact that they hold for specific estimators built on a given model. In the present paper, we shall not specify the estimators $\hat s_{m}$ which can therefore be arbitrary. 

An alternative to model selection is aggregation (or mixing). The basic idea is to design a suitable combination of given estimators in order to outperform each of these separately. This approach can be found in Juditsky and Nemirovski~\citeyearpar{MR1792783}, Nemirovski~\citeyearpar{MR1775640}, Yang~\citeyearpar{MR1790617}, ~\citeyearpar{MR1762904}, ~\citeyearpar{MR1946426}, Tsybakov~\citeyearpar{tsy03}, Wegkamp~\citeyearpar{Wegkamp03}, Bunea, Tsybakov and Wegkamp~\citeyearpar{MR2351101} and Catoni~\citeyearpar{Catoni04} (we refer to his course of Saint Flour which takes back some mixing technics he introduced earlier). When the data are not i.i.d., some nice results of aggregation can be also be found in Leung and Barron~\citeyearpar{MR2242356} for the problem of mixing least-squares estimators of a mean of a Gaussian vector $Y$. In their paper, they assume the components of $Y$ to be  independent with a known common variance. Giraud~\citeyearpar{CGiraud09} extended their results to the case where it is unknown. 

The paper is organized as follows. The basic ideas underlying our approach will be described in Section~\ref{sect-base} and the main results are presented in Section~\ref{sect-main}. In Sections~\ref{sect-histo} and~\ref{sect-discret}, we show how our procedure provides an alternative to these $T$-estimators and histogram-type estimators respectively studied in Birg\'e~\citeyearpar{MR2219712} and Baraud and Birg\'e~\citeyearpar{MR2449129} under the same assumptions. Moreover, we shall also consider in Section~\ref{sect-histo} the case of histogram-type estimators based on random partitions (obtained by an algorithm such as CART for example). In Section~\ref{sect-GLM}, we consider the problem of estimating the mean $s$ of a random vector with nonnegative and independent components (typically the distributions we have in mind are Binomial, Poisson or Gamma). We consider two cases. One corresponds to the situation where $\sqrt{s}=(\sqrt{s_{1}},\ldots,\sqrt{s_{n}})$ is of the form $(F(x_{1}),\ldots,F(x_{n}))$ for some nonnegative function $F$ and points $x_{1},\ldots,x_{n}$ in $[0,1]$. For this problem, we show that the resulting estimator achieves the usual rate of convergence over classes of Besov balls. Alternatively, we consider the situation where $\sqrt{s}$ is a linear combination of predictors $v^{1},\ldots,v^{p}$ the number $p$ being allowed to be larger than $n$. The problem we consider is that of variable selection and we aim at selecting a ``best" subset of predictors in view of minimizing the estimation risk. Section~\ref{sect-regression} is devoted to the regression framework as described in Example~\ref{ex-marginal}. We consider there the problem of complete variable selection when the errors are not Gaussian nor sub-Gaussian which, to our knowledge, is new. In the opposite, the Gaussian case has been intensively studied in the recent years. It has been the usual statistical setting for justifying the use of  numerous procedures among which Birg\'e and Massart~\citeyearpar{MR1848946}, Tibshirani~\citeyearpar{MR1379242} with the Lasso, Efron {\it et al}~\citeyearpar{MR2060166} for LARS, Candes and Tao~\citeyearpar{MR2382644} for the Dantzig selector and Baraud, Giraud and Huet~\citeyearpar{BaGiHu2009} when the variance of the errors is unknown. As we shall see, our selection procedure requires very mild assumptions on the distribution of the errors (provided that it is known). In particular, we need not assume that the errors admit any finite moment. Finally, Section~\ref{sect:proof} is devoted to the proofs.
  
Throughout, we shall use the following notations. The quantity $|E|$ denotes the cardinal of a finite set $E$. The Euclidean norm of $\R^{n}$ is denoted $\norm{\ }$. We set $\R_{+}^{*}=\R_{+}\setminus\ac{0}$ and for $t\in \R_{+}^{*n}$, denote by $\sqrt{t}$ the vector $\pa{\sqrt{t_{1}},\ldots,\sqrt{t_{n}}}$. Given a closed convex subset $A$ of $\R^{n}$, $\Pi_{A}$ is the projection operator onto $A$.  We set for $t\in\LL_{0}$ and $\F\subset\LL_{0}$, $H(t,\F)=\inf_{f\in \F}H(t,f)$ and for $y>0$, 
\[
\B(t,y)=\ac{t'\in\LL_{0},\ H(t,t')\le y}.
\]
Throughout $z$ denotes some number in the interval $(0,1-1/\sqrt{2})$ to be chosen arbitrarily by the statistician and $C, C',C'',...$ constants that may vary from line to line.

\section{Basic formulas and basic ideas}\label{sect-base}
The aim of this section is to present the basic formulas and ideas underlying our approach. For the sake of simplicity, we shall assume $k=1$ until further notice. For $t\in\LL_{0}$, we define 
\[
\rho(s,t)=\int_{\X}\sqrt{st}\ d\mu.
\] 
This quantity corresponds to the Hellinger affinity whenever $s$ and $t$ are densities. Note that $H^{2}(s,t)$ is related to $\rho(s,t)$ by the formula
\[
2H^{2}(s,t)=\int_{\X}sd\mu+\int_{\X}td\mu-2\rho(s,t).
\]
Throughout, $t,t'$ will denote two elements of $\LL_{0}$ one should think of as estimators of $s$. One would  prefer $t'$ to $t$ if $H^{2}(s,t')$ is smaller than $H^{2}(s,t)$ or equivalently if 
\[
\cro{\rho(s,t')-{1\over 2}\int_{\X}t'd\mu}-\cro{\rho(s,t)-{1\over 2}\int_{\X}td\mu}\ge 0.
\]
Since $\int_{\X}td\mu$ and $\int_{\X}t'd\mu$ are both known, deciding whether $t'$ is preferable to $t$ amounts to estimating $\rho(s,t)$ and $\rho(s,t')$ in a suitable way. In the following sections, we present the material that will enable us to estimate these quantities on the basis of the observation $N$.

\subsection{An approximation of $\rho(.,.)$}
We start with the following variational formula. 
\begin{prop}\label{var}
Let $S$ be a subset of $\LL_{0}$ containing  $s$. For all $t\in \LL_{0}$, we have 
\[
\rho(s,t)=\inf_{r\in S}\rho_{r}(sd\mu,t)
\]
where, for a measure $\nu$ on $(\X,\A)$,
\begin{equation}\label{def-rhoQ}
\rho_{r}(\nu,t)={1\over 2}\cro{\rho(t,r)+\int_{\X}\sqrt{{t\over r}}d\nu}\le +\infty
\end{equation}
(using the conventions $0/0=0$ and $a/0=+\infty$ for all $a>0$). Besides, the infimum is achieved for $r=s$.
\end{prop}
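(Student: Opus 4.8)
The plan is to reduce the identity to a pointwise arithmetic--geometric mean (AM--GM) inequality and to exploit its equality case. First I would make the right-hand side explicit: substituting $\nu=sd\mu$ into \eref{def-rhoQ} and using $\rho(t,r)=\int_{\X}\sqrt{tr}\,d\mu$ gives
\[
\rho_{r}(sd\mu,t)={1\over 2}\cro{\int_{\X}\sqrt{tr}\,d\mu+\int_{\X}\sqrt{t\over r}\,s\,d\mu},
\]
so that the assertion $\rho(s,t)=\inf_{r\in S}\rho_{r}(sd\mu,t)$ amounts to comparing the integrand above with $\sqrt{st}$.

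For the lower bound, on the set $\ac{s>0,\ r>0}$ I would factor the integrand as $\tfrac12\sqrt{t}\pa{\sqrt{r}+s/\sqrt{r}}$ and apply AM--GM in the form $\sqrt{r}+s/\sqrt{r}\ge 2\sqrt{s}$, which yields $\tfrac12\sqrt{t}\pa{\sqrt{r}+s/\sqrt{r}}\ge\sqrt{st}$ at every such point. Integrating this pointwise bound, and discarding the manifestly nonnegative contribution of $\sqrt{tr}$ on $\ac{s=0}$, gives $\rho_{r}(sd\mu,t)\ge\int_{\X}\sqrt{st}\,d\mu=\rho(s,t)$ for every $r\in S$, hence $\inf_{r\in S}\rho_{r}(sd\mu,t)\ge\rho(s,t)$.

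To obtain the reverse inequality and identify the minimizer simultaneously, I would use that $s\in S$ together with the fact that AM--GM is an equality exactly when $\sqrt{r}=s/\sqrt{r}$, i.e.\ when $r=s$. Taking $r=s$, on $\ac{s>0}$ both $\sqrt{ts}$ and $\sqrt{t/s}\,s=\sqrt{ts}$ reduce the integrand to $\sqrt{ts}$, while on $\ac{s=0}$ the term $\sqrt{tr}=\sqrt{t\cdot s}$ vanishes; thus $\rho_{s}(sd\mu,t)=\int_{\X}\sqrt{st}\,d\mu=\rho(s,t)$. This shows the infimum equals $\rho(s,t)$ and is attained at $r=s$, as claimed.

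The only delicacy --- more a bookkeeping matter than a genuine obstacle --- lies in the degenerate points handled by the stated conventions $0/0=0$ and $a/0=+\infty$. I would verify the pointwise inequality case by case: where $r=0<t$ the term $\sqrt{t/r}\,s$ equals $+\infty$ on $\ac{s>0}$, making the bound trivial, and where $r=t=0$ both sides vanish. The cleanest way to dispose of the ambiguous product of $+\infty$ with $s=0$ is to read $\int_{\X}\sqrt{t/r}\,s\,d\mu$ as $\int_{\X}\sqrt{t/r}\,d\nu$ with $\nu=sd\mu$, so that the $\nu$-null set $\ac{s=0}$ never contributes --- which is precisely the reading implicit in \eref{def-rhoQ}.
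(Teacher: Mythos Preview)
Your proof is correct and is essentially the same as the paper's. The paper states the single pointwise inequality $2\sqrt{x}\le \sqrt{y}+x/\sqrt{y}$ and applies it with $x=st$, $y=rt$, which after division by $\sqrt{t}$ is exactly your AM--GM bound $\sqrt{r}+s/\sqrt{r}\ge 2\sqrt{s}$; your treatment of the degenerate sets $\{r=0\}$ and $\{s=0\}$ is in fact more explicit than the paper's, which simply invokes the stated conventions.
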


\begin{proof}
With the above conventions, note that for all nonnegative numbers $x,y$, $2\sqrt{x}\le \sqrt{y}+x/\sqrt{y}$. By applying this inequality with $x=st$, $y=rt$, the result follows by integration with respect to $\mu$. Besides, equality holds for $r=s$.
\end{proof}

It follows from the above proposition that, for a given $r\in\LL_{0}$, $\rho_{r}(sd\mu,t)$ approximates $\rho(s,t)$ from above. In fact, we can make this statement a little bit more precise.

\begin{prop}\label{var2}
Let $s,t,r\in\LL_{0}$. We have, 
\[
\rho_{r}\pa{sd\mu,t}-\rho(s,t)={1\over 2}\int_{\X}\sqrt{{t\over r}}\pa{\sqrt{s}-\sqrt{r}}^{2}d\mu.
\]
If $r=(t+t')/2$ with $t'\in\LL_{0}$, then
\begin{equation}\label{rhoM-approx}
0\le \rho_{r}\pa{sd\mu,t}-\rho(s,t)\le {1\over \sqrt{2}}\cro{H^{2}(s,t)+H^{2}(s,t')}.
\end{equation}
\end{prop}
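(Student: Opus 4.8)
The plan is to obtain the identity by a direct algebraic rearrangement of the definition~\eref{def-rhoQ}, and then to read off both inequalities in~\eref{rhoM-approx} from it.

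First I would pull the common factor $\sqrt{t/r}$ out of the two terms defining $\rho_{r}(sd\mu,t)$. Using the conventions $0/0=0$ and $a/0=+\infty$, one has $\sqrt{tr}=\sqrt{t/r}\,r$ and $\sqrt{st}=\sqrt{t/r}\,\sqrt{sr}$ pointwise, while $\int_{\X}\sqrt{t/r}\,d(sd\mu)=\int_{\X}\sqrt{t/r}\,s\,d\mu$. Hence
\[
\rho_{r}\pa{sd\mu,t}={1\over 2}\int_{\X}\sqrt{{t\over r}}\,(r+s)\,d\mu,\qquad \rho(s,t)=\int_{\X}\sqrt{{t\over r}}\,\sqrt{sr}\,d\mu,
\]
and subtracting leaves the integrand $\sqrt{t/r}\cro{(r+s)/2-\sqrt{sr}}$. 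Recognizing the elementary identity $(r+s)/2-\sqrt{sr}=\tfrac12\pa{\sqrt{s}-\sqrt{r}}^{2}$ then yields the claimed formula for $\rho_{r}\pa{sd\mu,t}-\rho(s,t)$. The only care needed is to check that this bookkeeping with the conventions is consistent on the sets where $r$, $t$ or $s$ vanish.

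The lower bound in~\eref{rhoM-approx} is then immediate, since the integrand $\sqrt{t/r}\,\pa{\sqrt{s}-\sqrt{r}}^{2}$ is nonnegative. For the upper bound I would specialize to $r=(t+t')/2$ and combine two elementary estimates. First, since $r\ge t/2$ one has $\sqrt{t/r}\le\sqrt2$, so the identity gives $\rho_{r}\pa{sd\mu,t}-\rho(s,t)\le (1/\sqrt2)\int_{\X}\pa{\sqrt{s}-\sqrt{r}}^{2}d\mu$. Second, the map $x\mapsto\pa{\sqrt{s}-\sqrt{x}}^{2}$ is convex on $x\ge0$ (its second derivative is $\sqrt{s}/(2x^{3/2})\ge0$), so evaluating at the midpoint $r=(t+t')/2$ gives the pointwise bound $\pa{\sqrt{s}-\sqrt{r}}^{2}\le\tfrac12\pa{\sqrt{s}-\sqrt{t}}^{2}+\tfrac12\pa{\sqrt{s}-\sqrt{t'}}^{2}$. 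Integrating against $\mu$ produces exactly $H^{2}(s,t)+H^{2}(s,t')$, and combining the two estimates gives the bound $(1/\sqrt2)\cro{H^{2}(s,t)+H^{2}(s,t')}$.

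The computation is essentially routine; the only genuine point to watch is the pointwise control of the weight $\sqrt{t/r}$, which is precisely what the choice $r=(t+t')/2$ is designed to deliver, since it forces $t/r\le2$. The convexity step could be replaced by a parallelogram-type inequality for the Hellinger distance, but the midpoint-convexity route is the cleanest.
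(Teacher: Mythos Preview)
Your proof is correct and follows essentially the same route as the paper's: the identity is obtained by the same algebraic rearrangement, and the upper bound combines the pointwise estimate $\sqrt{t/r}\le\sqrt{2}$ with the inequality $2H^{2}(s,r)\le H^{2}(s,t)+H^{2}(s,t')$. The only cosmetic difference is that you phrase the latter via convexity of $x\mapsto(\sqrt{s}-\sqrt{x})^{2}$, whereas the paper invokes concavity of $\sqrt{\,\cdot\,}$ to obtain $\rho(s,r)\ge\tfrac12[\rho(s,t)+\rho(s,t')]$; these are the same fact.
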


\begin{proof} It follows from the definition of $\rho_{r}$ that 
\begin{eqnarray*}
2\cro{\rho_{r}(sd\mu,t)-\rho(s,t)}&=& \int_{\X}\sqrt{tr}\ d\mu+\int_{\X}\sqrt{{t\over r}}\ sd\mu-2\int_{\X}\sqrt{st}\ d\mu\\
&=& \int_{\X}\sqrt{{t\over r}}\pa{\sqrt{s}-\sqrt{r}}^{2}d\mu.
\end{eqnarray*}
For the second part, note that $(t/r)(x)\le 2$ for all $x\in\X$ and therefore $
\rho_{r}(sd\mu,t)-\rho(s,t)\le \sqrt{2}\ H^{2}(s,r)$. It remains to bound $\ H^{2}(s,r)$ from above. The concavity of the map $t\mapsto \sqrt{t}$ implies that $\rho(s,r)\ge\cro{\rho(s,t)+\rho(s,t')}/2$ and therefore $2H^{2}(s,r)\le H^{2}(s,t)+H^{2}(s,t')$, which leads to the result.
\end{proof}

The important point about Proposition~\ref{var2} (more precisely Inequality~\eref{rhoM-approx}) lies in the fact that the constant $1/\sqrt{2}$ is smaller than 1. This makes it possible to use the (sign of the) difference 
\[
T(sd\mu,t,t')=\cro{\rho_{r}(sd\mu,t')-{1\over 2}\int_{\X}t'd\mu}-\cro{\rho_{r}(sd\mu,t)-{1\over 2}\int_{\X}td\mu}
\]
with $r=(t+t')/2$ as an alternative benchmark to find the closest element to $s$ (up to a multiplicative constant) among the pair $\pa{t,t'}$. More precisely, we can deduce from Proposition~\ref{var2} the following corollary.

\begin{cor}
If $T(sd\mu,t,t')\ge 0$, then
\[
H^{2}(s,t')\le {\sqrt{2}+1\over \sqrt{2}-1}H^{2}(s,t).
\]
\end{cor}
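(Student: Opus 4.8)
The plan is to compare the observable criterion $T(sd\mu,t,t')$ with the ``ideal'' criterion that one would obtain by replacing each approximation $\rho_{r}(sd\mu,\cdot)$ by the true affinity $\rho(s,\cdot)$, and to control the gap between the two through Proposition~\ref{var2}.

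First I would record the elementary identity that the ideal criterion is exactly the difference of the two Hellinger risks. Using $2H^{2}(s,u)=\int_{\X}sd\mu+\int_{\X}ud\mu-2\rho(s,u)$ for $u=t$ and $u=t'$, the terms $\int_{\X}sd\mu$ cancel upon subtraction and one gets
\[
\cro{\rho(s,t')-{1\over 2}\int_{\X}t'd\mu}-\cro{\rho(s,t)-{1\over 2}\int_{\X}td\mu}=H^{2}(s,t)-H^{2}(s,t').
\]
Consequently, writing $a=\rho_{r}(sd\mu,t')-\rho(s,t')$ and $b=\rho_{r}(sd\mu,t)-\rho(s,t)$ with $r=(t+t')/2$, the observable criterion decomposes as
\[
T(sd\mu,t,t')=\cro{H^{2}(s,t)-H^{2}(s,t')}+(a-b).
\]

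Next I would bound $a$ and $b$ via Proposition~\ref{var2}. That proposition gives directly $b\ge0$, and, since $r=(t+t')/2$ is symmetric in the pair, the upper bound in~\eref{rhoM-approx} applies to $a$ as well (apply it with the roles of $t$ and $t'$ exchanged, which leaves $r$ unchanged), yielding $a\le \tfrac{1}{\sqrt2}\cro{H^{2}(s,t)+H^{2}(s,t')}$. Feeding the hypothesis $T(sd\mu,t,t')\ge0$ into the decomposition and using $b\ge0$ to drop $-b$ then gives
\[
0\le H^{2}(s,t)-H^{2}(s,t')+a\le H^{2}(s,t)-H^{2}(s,t')+{1\over\sqrt2}\cro{H^{2}(s,t)+H^{2}(s,t')}.
\]
Rearranging the terms in $H^{2}(s,t')$ and $H^{2}(s,t)$ leads to $\pa{1-1/\sqrt2}H^{2}(s,t')\le \pa{1+1/\sqrt2}H^{2}(s,t)$, and multiplying numerator and denominator by $\sqrt2$ produces the announced constant $(\sqrt2+1)/(\sqrt2-1)$.

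I do not expect a genuine obstacle here: the statement is essentially algebraic once the decomposition $T=\cro{H^{2}(s,t)-H^{2}(s,t')}+(a-b)$ is in place. The only point requiring a little care is the bookkeeping of signs, namely combining the lower bound $b\ge0$ with the upper bound on $a$, and observing that Proposition~\ref{var2} may be invoked for $a$ precisely because the midpoint $r$ is symmetric in $t$ and $t'$. The crucial quantitative input is that the constant $1/\sqrt2$ in~\eref{rhoM-approx} is strictly less than $1$, which is exactly what keeps the coefficient $1-1/\sqrt2$ positive and hence the resulting bound finite.
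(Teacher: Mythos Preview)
Your proof is correct and follows essentially the same route as the paper's: both rewrite $T(sd\mu,t,t')$ as the difference $H^{2}(s,t)-H^{2}(s,t')$ plus the two correction terms coming from $\rho_{r}(sd\mu,\cdot)-\rho(s,\cdot)$, discard the favourable one via the lower bound in~\eref{rhoM-approx} and control the other via the upper bound, then rearrange. The only cosmetic difference is that you name the corrections $a,b$ and start from $T$, whereas the paper starts from $H^{2}(s,t')-H^{2}(s,t)$ and works toward $-T$.
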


\begin{proof}
Using Inequality~\eref{rhoM-approx} and the assumption, we have
\begin{eqnarray*}
H^{2}\pa{s,t'}- H^{2}\pa{s,t}&=&\cro{\rho(s,t)-{1\over 2}\int_{\X}td\mu}-\cro{\rho(s,t')-{1\over 2}\int_{\X}t'd\mu}\\
&=& \cro{\rho_{r}(sd\mu,t)-{1\over 2}\int_{\X}td\mu}-\cro{\rho_{r}(sd\mu,t')-{1\over 2}\int_{\X}t'd\mu}\\
&& +\ \rho\pa{s,t}-\rho_{r}\pa{sd\mu,t}+\ \rho_{r}\pa{sd\mu,t'}-\rho\pa{s,t'}\\
&\le& {1\over \sqrt{2}}\cro{H^{2}\pa{s,t}+H^{2}\pa{s,t'}}
\end{eqnarray*}
which leads to the result. 
\end{proof}

\subsection{An estimator of $\rho_{r}(.,.)$}
Throughout, given $t,t'\in\LL_{0}$, we set 
\[
r={t+t'\over 2}\in\LL_{0}.
\]
The superiority of the quantity $\rho_{r}\pa{sd\mu,t}$ over 
$\rho\pa{s,t}$ lies in the fact that the former can easily be estimated by its empirical counterpart, namely 
\begin{equation}\label{def-rhohat}
\rho_{r}\pa{N,t}={1\over 2}\cro{\rho(t,r)+\int \sqrt{{t\over r}}dN}.
\end{equation}
Note that $\rho_{r}\pa{N,t}$ is an unbiased estimator of $\rho_{r}\pa{sd\mu,t}$ because of~\eref{Eq-f}. Consequently, a natural way of deciding which between $t$ and $t'$ is the closest to $s$ is to consider the test statistics 
\[
T(N,t,t')=\cro{\rho_{r}(N,t')-{1\over 2}\int_{\X}t'd\mu}-\cro{\rho_{r}(N,t)-{1\over 2}\int_{\X}td\mu}.
\]

Replacing the ``ideal" test statistic $T(sd\mu,t,t')$ by its empirical counterpart leads to an estimation error given by the process $Z(N,.,.)$ defined on $\LL_{0}^{2}$ by
\begin{eqnarray*}
Z(N,t,t')&=&T(N,t,t')-T(sd\mu,t,t')\\
&=&
\cro{\rho_{r}\pa{N,t'}-\rho_{r}\pa{sd\mu,t'}}-\cro{\rho_{r}\pa{N,t}-\rho_{r}\pa{sd\mu,t}}\\
&=& \int_{\X}\psi(t,t',x)dN-\int_{\X}\psi(t,t',x)sd\mu
\end{eqnarray*}
where $\psi(t,t',x)$ is the function on $\LL_{0}^{2}\times\X$ with values in $[-1/\sqrt{2},1/\sqrt{2}]$ given by
\begin{equation}\label{def-psi}
\psi(t,t',x)={1\over \sqrt{2}}\cro{\ \sqrt{{1\over 1+t(x)/t'(x)}}-{\sqrt{1\over 1+t'(x)/t(x)}}\ }.
\end{equation}
The study of the empirical process $Z(N,.,.)$ over the product space $S\times S'$ is at the heart of our technics. 

\subsection{The multidimensional case $k>1$} 
In the multidimensional case, the same results can be obtained by reasoning component by component. More precisely, the formulas of the above sections extend by using the convention that for all $k$-uplets $\nu=(\nu_{1},\ldots,\nu_{k})$ of measures on $(\X_{1},\A_{1}),\ldots,(\X_{k},\A_{k})$ respectively,
\[
\int_{\X}\phi(s,t,t',r)d\nu=\sum_{i=1}^{k}\int_{\X_{i}}\phi(s_{i},t_{i},t'_{i},r_{i})d\nu_{i},
\]
whatever the functions $s,t,t',r\in\LL_{0}$ and mappings $\phi$ from $\R_{+}^{4}$ into $\R$. 

\section{The main results}\label{sect-main}
Throughout this section, we consider an at most countable index set $\M$ and 
a family $\ac{S_{m},\ m\in\M}$  of nonvoid subsets of $\LL_{0}$, we shall refer to as models. Besides, we assume we have at disposal an at most countable family $\ac{\hat s_{\lambda},\ \lambda\in\Lambda}$ of estimators of $s$ based on $N$ with values in $\SS=\bigcup_{m\in\M} S_{m}$. In particular, to each $\lambda\in\Lambda$ corresponds an estimator $\hat s_{\lambda}$ together with a (possibly random) index $\hat m(\lambda)\in\M$ such that $\hat s_{\lambda}\in S_{\hat m(\lambda)}$. Setting for $t\in\SS$,   
\[
\M\pa{t}=\ac{m\in\M,\ t\in S_{m}}
\]
we therefore have $\hat m(\lambda)\in\M(\hat s_{\lambda})$. We associate a nonnegative weight $\Delta_{m}$ to each $m\in\M$ and assume that 
\begin{equation}\label{sigma}
\Sigma=\sum_{m\in \M}e^{-\Delta_{m}}<+\infty\ \ \text{and}\ \ \Delta_{m}\ge 1\ \ \text{for all}\ \ m\in\M.
\end{equation}
The condition $\Delta_{m}\ge 1$ for all $m\in\M$ is only required to simplify the presentation of our results. 

As already mentioned in the introduction, our aim is to select some estimator among the family $\ac{\hat s_{\lambda},\ \lambda\in\Lambda}$ in order to achieve the smallest possible risk. We shall distinguish between two situations. 

\subsection{Direct selection}
Let $\tau,\gamma$ be positive numbers. We consider the following selection procedure 
\begin{proc}\label{P1}
Let $\pen$ be some penalty function mapping $\SS$ into $\R_{+}$. Given a pair $(\hat s_{\lambda},\hat s_{\lambda'})$ such that $\hat s_{\lambda}\neq\hat s_{\lambda'}$, we consider the test statistic
\begin{eqnarray}
\gT(N,\hat s_{\lambda},\hat s_{\lambda'})&=&\cro{\rho_{r}(N,\hat s_{\lambda'})-{1\over 2}\int_{\X}\hat s_{\lambda'}d\mu-\pen(\hat s_{\lambda'})}\label{critere}\\
&&\ \ -\cro{\rho_{r}(N,\hat s_{\lambda})-{1\over 2}\int_{\X}\hat s_{\lambda}d\mu-\pen(\hat s_{\lambda})}\nonumber
\end{eqnarray}
where $r=(\hat s_{\lambda}+\hat s_{\lambda'})/2$ and $\rho_{r}(N,.)$ is given by~\eref{def-rhohat}. We set 
\[
\EE(\hat s_{\lambda})=\ac{\hat s_{\lambda'},\ \gT(N,\hat s_{\lambda},\hat s_{\lambda'})\ge 0}
\]
and note that either $\hat s_{\lambda}\in \EE(\hat s_{\lambda'})$ or $\hat s_{\lambda'}\in \EE(\hat s_{\lambda})$ since $\gT(N,\hat s_{\lambda},\hat s_{\lambda'})=-\gT(N,\hat s_{\lambda'},\hat s_{\lambda})$. Then, we define 
\[
\D(\hat s_{\lambda})=\sup\ac{H^{2}\pa{\hat s_{\lambda},\hat s_{\lambda'}}\telque\  \hat s_{\lambda'}\in\EE(\hat s_{\lambda})}\ \ {\rm if}\ \ \EE(\hat s_{\lambda})\neq \varnothing
\]
and $\D(\hat s_{\lambda})=0$ otherwise. Finally, we select $\hat \lambda$ among $\Lambda$ as any element satisfying 
\[
\D(\hat s_{\hat \lambda})\le \D(\hat s_{\lambda})+\tau,\ \ \ \forall \lambda\in\Lambda.
\]
\end{proc}

For $(t,t')\in\LL_{0}^{2}$ and $y>0$, let us set
\[
w^{2}(t,t',y)=\cro{{H^{2}\pa{s,t}+H^{2}\pa{s,t'}}}\vee y^{2}.
\]
We assume the following.
\begin{ass}[$\tau,\gamma$]\label{hypo}
For all pairs $(m,m')\in\M^{2}$, there exist positive numbers $d_{m},d_{m'}$ such that for all $\xi>0$ and $y^{2}\ge \tau\pa{d_{m}\vee d_{m'}+\xi}$,
\[
\P\cro{\sup_{(t,t')\in S_{m}\times S_{m'}}{Z(N,t,t')\over w^{2}(t,t',y)}\ge z}\le \gamma e^{-\xi}.
\]
\end{ass} 
This assumption means that for $\xi$ large enough the error process $Z(N,t,t')$ is uniformly controlled by $w^{2}(t,t',y)$ over $S_{m}\times S_{m'}$ with probability close to 1. Under suitable assumptions, the quantities $d_{m}$ measure in some sense the massiveness of the $S_{m}$. For example, if  $S_{m}$ is the linear span of piecewise constant function on each element of a partition $m$ of $\X$, then $d_{m}$ is merely proportional to the cardinality of $m$. If $S_{m}$ is a discrete subset $\LL_{0}$, $d_{m}$ is related to its metric dimension (in a sense to be specified later on). 

We obtain the following result.
\begin{thm}\label{main}
Let $\tau,\gamma$ be numbers and $\ac{\Delta_{m},\ m\in\M}$ a family of nonnegative numbers satisfying~\eref{sigma}. Under Assumption~\ref{hypo},  choose $\tilde s=\hat s_{\hat \lambda}$ among the family $\ac{\hat s_{\lambda},\ \lambda\in\Lambda}$  according to Procedure~\ref{P1} with  $\pen$ satisfying
\begin{equation}\label{def-pen}
\pen(t)\ge z\tau\inf\ac{d_{m}+\Delta_{m},\ m\in \M\pa{t}}\ \ \forall t\in\SS.
\end{equation}
Then, for all $\xi>0$,
\[
\P\cro{H^{2}\pa{s,\tilde s}\ge C_{1}\cro{\inf_{\lambda\in\Lambda}\cro{H^{2}\pa{s,\hat s_{\lambda}}+\pen\pa{\hat s_{\lambda}}}}+C_{2}\tau\xi}\le \pa{\gamma\Sigma^{2} e^{-\xi}}\wedge 1.
\]
where $C_{1}=C_{1}(z)$ and $C_{2}=C_{2}(z)$ are positive numbers given by~\eref{C1} and~\eref{C2} respectively, depending on the choice of $z$ only. 
\end{thm}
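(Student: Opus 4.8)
The plan is to run the tournament analysis underlying $T$-estimation: linearise the penalised criterion $\gT$ around the ideal Hellinger comparison, control the fluctuation process $Z(N,\cdot,\cdot)$ on a single event of large probability, and then compare the selected $\tilde s$ with an arbitrary competitor $\hat s_{\lambda_0}$. First I would turn the selection event into a deterministic inequality between Hellinger distances. For a pair $(t,t')$ with $r=(t+t')/2$, writing $b(u)=\rho_{r}(sd\mu,u)-\rho(s,u)$, the identity $2H^{2}(s,u)=\int_{\X}sd\mu+\int_{\X}ud\mu-2\rho(s,u)$ gives $T(sd\mu,t,t')=H^{2}(s,t)-H^{2}(s,t')+[b(t')-b(t)]$. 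Proposition~\ref{var2}, applied to both $u=t$ and $u=t'$ (legitimate since $t/r\le2$ and $t'/r\le2$), shows $0\le b(u)\le\tfrac1{\sqrt2}[H^{2}(s,t)+H^{2}(s,t')]$ by~\eref{rhoM-approx}, whence
\[
T(sd\mu,t,t')\le\pa{1+\tfrac1{\sqrt2}}H^{2}(s,t)-\pa{1-\tfrac1{\sqrt2}}H^{2}(s,t').
\]
Since $\gT(N,t,t')=T(sd\mu,t,t')+Z(N,t,t')+\pen(t)-\pen(t')$, the membership $\hat s_{\lambda'}\in\EE(\hat s_{\lambda_0})$, i.e.\ $\gT(N,\hat s_{\lambda_0},\hat s_{\lambda'})\ge0$, will eventually let me solve for $H^{2}(s,\hat s_{\lambda'})$; the coefficient to be inverted is $1-\tfrac1{\sqrt2}$, and the inversion works precisely because $z<1-\tfrac1{\sqrt2}$.

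Next I would fix the control of $Z$. Put $y_{m,m'}^{2}=\tau\pa{d_{m}\vee d_{m'}+\xi+\Delta_{m}+\Delta_{m'}}$ and let $\Omega_{\xi}$ be the intersection over all $(m,m')\in\M^{2}$ of the events $\ac{\sup_{(t,t')\in S_{m}\times S_{m'}}Z(N,t,t')/w^{2}(t,t',y_{m,m'})<z}$. Applying Assumption~\ref{hypo} with its free parameter set to $\xi+\Delta_{m}+\Delta_{m'}$ and summing the resulting bounds $\gamma e^{-\xi}e^{-\Delta_{m}-\Delta_{m'}}$ over $\M^{2}$ yields $\P\pa{\Omega_{\xi}^{c}}\le\gamma e^{-\xi}\pa{\sum_{m}e^{-\Delta_{m}}}^{2}=\gamma\Sigma^{2}e^{-\xi}$ by~\eref{sigma}. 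Crucially $\Omega_{\xi}$ depends on no competitor, which is what will permit the final infimum over $\lambda_0$.

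For each $\lambda$ I choose $m(\lambda)\in\M(\hat s_{\lambda})$ (almost) attaining $\inf_{m\in\M(\hat s_{\lambda})}(d_{m}+\Delta_{m})$, so that $\pen(\hat s_{\lambda})\ge z\tau(d_{m(\lambda)}+\Delta_{m(\lambda)})$ by~\eref{def-pen}. Fixing $\hat s_{\lambda_0}$ and taking any $\hat s_{\lambda'}\in\EE(\hat s_{\lambda_0})$, on $\Omega_{\xi}$ I insert $Z(N,\hat s_{\lambda_0},\hat s_{\lambda'})\le z\,w^{2}(\hat s_{\lambda_0},\hat s_{\lambda'},y_{m(\lambda_0),m(\lambda')})$ and the bound of the first paragraph into $\gT\ge0$, and split on the two branches of $w^{2}$. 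On the branch $w^{2}=H^{2}(s,\hat s_{\lambda_0})+H^{2}(s,\hat s_{\lambda'})$ I invert the now strictly positive coefficient $1-\tfrac1{\sqrt2}-z$; on the branch $w^{2}=y^{2}$ I bound $z\,y^{2}\le\pen(\hat s_{\lambda_0})+\pen(\hat s_{\lambda'})+z\tau\xi$, the term $\pen(\hat s_{\lambda'})$ cancelling the $-\pen(\hat s_{\lambda'})$ already present. Either branch yields, for every $\hat s_{\lambda'}\in\EE(\hat s_{\lambda_0})$,
\[
H^{2}(s,\hat s_{\lambda'})\le C(z)\cro{H^{2}(s,\hat s_{\lambda_0})+\pen(\hat s_{\lambda_0})}+C'(z)\tau\xi,
\]
and, through $H^{2}(u,v)\le2H^{2}(s,u)+2H^{2}(s,v)$, a bound of the same form for $\D(\hat s_{\lambda_0})$ (trivial if $\EE(\hat s_{\lambda_0})=\varnothing$). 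I then compare $\tilde s$ with $\hat s_{\lambda_0}$: by Procedure~\ref{P1} either $\tilde s\in\EE(\hat s_{\lambda_0})$, and the displayed bound applies to $\tilde s$ directly, or $\hat s_{\lambda_0}\in\EE(\tilde s)$, in which case $H^{2}(\tilde s,\hat s_{\lambda_0})\le\D(\tilde s)\le\D(\hat s_{\lambda_0})+\tau$ by the selection rule and the triangle inequality closes the estimate, the stray $\tau$ being absorbed via $\tau\le\pen(\hat s_{\lambda_0})/z$ (valid since $d_{m}\ge0$ and $\Delta_{m}\ge1$). Taking the infimum over $\lambda_0$ on $\Omega_{\xi}$ then gives Theorem~\ref{main} with $C_{1},C_{2}$ functions of $z$ alone.

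The main obstacle is the joint calibration in the second and third paragraphs: the weights $y_{m,m'}^{2}$ must be large enough for the union bound to telescope to $\gamma\Sigma^{2}e^{-\xi}$, yet small enough that the excess $z\,y^{2}$ produced on the branch $w^{2}=y^{2}$ is exactly reabsorbed by the penalty lower bound~\eref{def-pen}, with the $\Delta_{m},\Delta_{m'}$ doing double duty in both roles. The constraint $z<1-\tfrac1{\sqrt2}$ inherited from Proposition~\ref{var2} is what keeps the coefficient of $H^{2}(s,\hat s_{\lambda'})$ negative throughout, and checking that the surviving constants depend only on $z$ is the one point requiring genuine care.
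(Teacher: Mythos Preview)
Your proposal is correct and follows essentially the same route as the paper's proof: the same good event $\Omega_{\xi}$ built from Assumption~\ref{hypo} with the shifted level $\xi+\Delta_{m}+\Delta_{m'}$, the same use of Proposition~\ref{var2} to turn $\gT\ge 0$ into an inequality with the critical coefficient $1-1/\sqrt{2}-z>0$, and the same passage from a uniform bound on $\EE(\hat s_{\lambda_0})$ to a bound on $\D(\hat s_{\lambda_0})$ and then on $H^{2}(s,\tilde s)$ via the selection rule. The only cosmetic differences are that the paper bounds $w^{2}\le [H^{2}+H^{2}]+y^{2}$ in one stroke rather than splitting on the two branches of the maximum, and handles the final comparison through $H^{2}(\hat s_{\lambda},\hat s_{\hat\lambda})\le \D(\hat s_{\hat\lambda})\vee\D(\hat s_{\lambda})$ rather than your explicit dichotomy; neither changes the logic, and the paper's version additionally tracks the explicit constants $C_{1}(z),C_{2}(z)$ via weighted triangle inequalities with parameter $\sqrt{\alpha}$.
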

The proof is delayed to Section~\ref{sect-pmain}. 

By integration with respect to $\xi$ we deduce the following risk bound.
\begin{cor}\label{cor1}
Under the assumptions of Theorem~\ref{main}, there exists a constant $C$ depending on $z$ only such that
\begin{eqnarray*}
C\E\cro{H^{2}\pa{s,\tilde s}}&\le& \E\cro{\inf_{\lambda\in\Lambda}\ac{H^{2}\pa{s,\hat s_{\lambda}}+\pen(\hat s_{\lambda})}}+\tau\cro{(\gamma\Sigma^{2})\vee 1}\\
&\le& \inf_{\lambda\in\Lambda}\ac{\E\cro{H^{2}\pa{s,\hat s_{\lambda}}+\pen(\hat s_{\lambda})}}+\tau\cro{(\gamma\Sigma^{2})\vee 1}.
\end{eqnarray*}

In particular, if equality holds in~\eref{def-pen},
\begin{eqnarray}
\E\cro{H^{2}\pa{s,\tilde s}}&\le& C'\inf_{\lambda\in\Lambda}\ac{\E\cro{H^{2}\pa{s,\hat s_{\lambda}}}+\E\cro{v^{2}(\hat s_{\lambda})}}\label{joli}
\end{eqnarray}
where, for all $\lambda\in\Lambda$,
\begin{equation}\label{v}
v^{2}(\hat s_{\lambda})=\tau\cro{\inf_{m\in\M(\hat s_{\lambda})}d_{m}\vee \Delta_{m}}\le \tau\pa{d_{\hat m(\lambda)}\vee \Delta_{\hat m(\lambda)}}
\end{equation}
and $C'$ is a constant depending on $z,\gamma$ and $\Sigma$.
\end{cor}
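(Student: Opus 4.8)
The plan is to obtain Corollary~\ref{cor1} purely by integrating the deviation bound of Theorem~\ref{main} against $\xi$, and then to convert the penalty into the dimension term $v^{2}$. Write $X=H^{2}(s,\tilde s)$ and let $W=\inf_{\lambda\in\Lambda}\cro{H^{2}(s,\hat s_{\lambda})+\pen(\hat s_{\lambda})}$, which is itself a nonnegative random variable since the $\hat s_{\lambda}$ depend on $N$. Theorem~\ref{main} says exactly that $\P\cro{X\ge C_{1}W+C_{2}\tau\xi}\le (\gamma\Sigma^{2}e^{-\xi})\wedge 1$ for every $\xi>0$. First I would apply the layer-cake identity $\E\cro{(X-C_{1}W)_{+}}=\int_{0}^{\infty}\P\cro{X-C_{1}W> u}\,du$, valid for any real random variable, and substitute $u=C_{2}\tau\xi$ to get
\[
\E\cro{(X-C_{1}W)_{+}}=C_{2}\tau\int_{0}^{\infty}\P\cro{X> C_{1}W+C_{2}\tau\xi}\,d\xi\le C_{2}\tau\int_{0}^{\infty}\pa{\gamma\Sigma^{2}e^{-\xi}\wedge 1}\,d\xi.
\]
Since $X\le C_{1}W+(X-C_{1}W)_{+}$ pointwise, taking expectations gives $\E[X]\le C_{1}\E[W]+\E\cro{(X-C_{1}W)_{+}}$.

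The remaining integral is elementary: writing $a=\gamma\Sigma^{2}$, if $a\le 1$ it equals $a$, while if $a>1$ it equals $1+\ln a\le a$ (because $\xi\mapsto 1+\ln\xi-\xi$ vanishes at $1$ and is decreasing on $[1,\infty)$), so in both cases $\int_{0}^{\infty}(a e^{-\xi}\wedge 1)\,d\xi\le a\vee 1=(\gamma\Sigma^{2})\vee 1$. Hence $\E[X]\le C_{1}\E[W]+C_{2}\tau\cro{(\gamma\Sigma^{2})\vee 1}$, and dividing by $\max(C_{1},C_{2})$ yields the first displayed inequality of the corollary with $C=1/\max(C_{1},C_{2})$, which depends on $z$ only through $C_{1},C_{2}$. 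The second inequality is the trivial bound $\E\cro{\inf_{\lambda}(\cdots)}\le\inf_{\lambda}\E\cro{\cdots}$, obtained by dominating the infimum by each fixed term and taking the expectation before the infimum.

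For the final bound~\eref{joli}, suppose equality holds in~\eref{def-pen}. Then for each $\lambda$, bounding $d_{m}+\Delta_{m}\le 2(d_{m}\vee\Delta_{m})$ inside the infimum over $m\in\M(\hat s_{\lambda})$ gives $\pen(\hat s_{\lambda})=z\tau\inf_{m}(d_{m}+\Delta_{m})\le 2z\,v^{2}(\hat s_{\lambda})\le v^{2}(\hat s_{\lambda})$, the last step using $z<1-1/\sqrt2<1/2$ so that $2z<1$, with $v^{2}$ as in~\eref{v}. Inserting this into the first inequality and once more using $\E\inf\le\inf\E$ bounds $C\E[X]$ by $\inf_{\lambda}\ac{\E[H^{2}(s,\hat s_{\lambda})]+\E[v^{2}(\hat s_{\lambda})]}+\tau\cro{(\gamma\Sigma^{2})\vee 1}$. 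To absorb the additive constant into the infimum I would note that $\Delta_{m}\ge 1$ forces $d_{m}\vee\Delta_{m}\ge 1$, whence $v^{2}(\hat s_{\lambda})\ge\tau$ and thus $\inf_{\lambda}\ac{\E[H^{2}]+\E[v^{2}]}\ge\tau$; consequently $\tau\cro{(\gamma\Sigma^{2})\vee 1}$ is at most $\cro{(\gamma\Sigma^{2})\vee 1}$ times that same infimum. Summing the two contributions yields~\eref{joli} with $C'=(1+(\gamma\Sigma^{2})\vee 1)/C$, a constant depending on $z,\gamma$ and $\Sigma$.

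The only genuinely delicate point is that $W$ sits inside the probability in Theorem~\ref{main} and is data-dependent; the integration above is legitimate precisely because the right-hand side $(\gamma\Sigma^{2}e^{-\xi})\wedge 1$ is deterministic, so the layer-cake formula applied to the single random variable $X-C_{1}W$ lets the randomness of $W$ pass through the expectation unharmed.
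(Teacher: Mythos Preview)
Your proof is correct and follows exactly the approach the paper indicates, namely ``by integration with respect to $\xi$'' of the deviation bound in Theorem~\ref{main}; the paper does not write out this computation. Your treatment of the layer-cake argument with the data-dependent $W$, the evaluation of $\int_0^\infty(\gamma\Sigma^2 e^{-\xi}\wedge 1)\,d\xi\le(\gamma\Sigma^2)\vee 1$, and the absorption of the additive term via $v^2(\hat s_\lambda)\ge\tau$ (from $\Delta_m\ge 1$) are all sound and make explicit what the paper leaves implicit.
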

Inequality~\eref{joli} compares the risk of the resulting estimator $\tilde s$ to those of the $\hat s_{\lambda}$ plus an additional term $\E\cro{v^{2}(\hat s_{\lambda})}$. If $\hat s_{\lambda}$ belongs to $S_{m}$  with probability 1,
\begin{equation}\label{born1}
v^{2}(\hat s_{\lambda})\le \tau \pa{d_{m}\vee \Delta_{m}}.
\end{equation}
We emphasize that~\eref{born1} does not take into account the complexity of the collection of estimators $\ac{\hat s_{\lambda},\ \lambda\in\Lambda}$ itself. In particular, if for all $\lambda\in\Lambda$, $\hat s_{\lambda}$ belongs to a same model $S_{m}$ with probability 1, then by taking $\M=\ac{m}$ and $\Delta_{m}=1$, we obtain for $\tilde s$ the following risk bound 
\[
\E\cro{H^{2}(s,\tilde s)}\le C'\ac{\inf_{\lambda\in\Lambda}\E\cro{H^{2}\pa{s,\hat s_{\lambda}}}+\tau \pa{d_{m}\vee 1}}
\]
no matter how large the collection of $\hat s_{\lambda}$ is. 

\subsection{Indirect selection}
Let $\tau,M$ be some positive numbers. Throughout this section, we assume that for some nonnegative numbers $a,b,c$, the measure $N$ satisfies the following.

\begin{ass}[$a,b,c$]\label{bern}
For all $y,\xi> 0$ 
\[
\sup_{t,t'\in \B(s,y)}\P\cro{Z(N,t,t')>\xi}\le b\exp\cro{-{a\xi^{2}\over y^{2}+c\xi}}.
\]
\end{ass}

This assumption is satisfied in the following cases.

\begin{prop}\label{Examples}
Assumption~\ref{bern} holds with $a=n^{2}/6$, $b=1$ and $c=n\sqrt{2}/6$ for Example~\ref{ex-densite}, with $a=1/6$, $b=1$ and $c=\sqrt{2}/6$ for Example~\ref{ex-marginal} and with $a=1/12$, $b=1$ and $c=\sqrt{2}/36$ for Example~\ref{ex-poisson}.
\end{prop}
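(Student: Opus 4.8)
The plan is to establish the three cases through a single mechanism, exploiting that in each Example the quantity $Z(N,t,t')=\int_{\X}\psi\,dN-\int_{\X}\psi\,s\,d\mu$ is a centered linear functional of a random measure whose integrand is uniformly bounded, $\ab{\psi(t,t',\cdot)}\le 1/\sqrt{2}$ by~\eref{def-psi}, and that the componentwise convention of Section~\ref{sect-base} reduces $Z$ to a sum over the independent pieces $N_{i}$. Beyond this boundedness, the one analytic input I need is a variance estimate: I claim that for $t,t'\in\B(s,y)$,
\[
\int_{\X}\psi^{2}(t,t',x)\,s\,d\mu\le H^{2}(s,t)+H^{2}(s,t')\le 2y^{2}.
\]

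First I would prove this variance bound. Writing $a=\sqrt{t}$, $b=\sqrt{t'}$, $c=\sqrt{s}$ pointwise, formula~\eref{def-psi} gives $\psi^{2}s=\tfrac12(b-a)^{2}c^{2}/(a^{2}+b^{2})$, so the estimate follows, after integration and summation over the components, from the pointwise inequality $(b-a)^{2}c^{2}/(a^{2}+b^{2})\le(c-a)^{2}+(c-b)^{2}$. The latter is a perfect-square identity,
\[
(c-a)^{2}+(c-b)^{2}-\frac{(b-a)^{2}c^{2}}{a^{2}+b^{2}}=\frac{\pa{c(a+b)-(a^{2}+b^{2})}^{2}}{a^{2}+b^{2}}\ge 0,
\]
and the final bound $2y^{2}$ is just the definition of $\B(s,y)$.

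With the variance proxy $v:=\int_{\X}\psi^{2}s\,d\mu\le 2y^{2}$ and the sup-norm bound in hand, I would then invoke the concentration tool adapted to each model. For Example~\ref{ex-marginal} the convention turns $Z$ into $\sum_{i=1}^{n}\pa{\psi_{i}(X_{i})-\E\psi_{i}(X_{i})}$, a sum of independent centered variables each at most $\sqrt{2}$ and of total variance at most $v$, so Bernstein's inequality yields $\P\cro{Z>\xi}\le\exp\pa{-\xi^{2}/\cro{2\pa{v+\sqrt{2}\,\xi/3}}}$, which is the announced Bennett form once $v\le 2y^{2}$ is inserted. Example~\ref{ex-densite} is the same computation carried out for the normalized empirical measure $N=n^{-1}\sum_{i}\delta_{X_{i}}$, the $n^{-1}$ normalization being what produces the powers of $n$ in $a$ and $c$. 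For Example~\ref{ex-poisson} one cannot split $Z$ into independent summands, so instead I would use the exact Laplace functional of a Poisson random measure,
\[
\log\E\cro{e^{\lambda Z}}=\int_{\X}\pa{e^{\lambda\psi}-1-\lambda\psi}s\,d\mu,
\]
bound the integrand by $2\psi^{2}\pa{e^{\lambda/\sqrt{2}}-1-\lambda/\sqrt{2}}$ using $\ab{\psi}\le 1/\sqrt{2}$, and optimize the resulting Chernoff bound over $\lambda>0$. Since the variance and range bounds are uniform over $t,t'\in\B(s,y)$, the same exponential controls every individual probability, so the supremum in Assumption~\ref{bern} is handled with prefactor $b=1$.

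The main obstacle is concentrated in two places. Conceptually everything rests on the variance estimate, but once the perfect-square identity above is spotted it is immediate; the genuinely case-specific work is the Poisson computation, where the absence of a sum of independent increments forces one to control the Laplace transform directly and then convert the Bennett bound into the polynomial form $a\xi^{2}/(y^{2}+c\xi)$. The remaining effort is bookkeeping: tracking the exact numerical constants---the range $1/\sqrt{2}$ of $\psi$ against the value $\sqrt{2}$ obtained after centering, and the $n$-dependent normalization in the density case---so that the three Bernstein or Bennett exponents match the announced values of $a$, $b$ and $c$.
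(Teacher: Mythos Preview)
Your overall plan coincides with the paper's: bound $\int_{\X}\psi^{2}s\,d\mu$ in terms of $H^{2}(s,t)+H^{2}(s,t')$, feed this together with the uniform bound $\ab{\psi}\le 1/\sqrt{2}$ into Bernstein's inequality for Examples~\ref{ex-densite} and~\ref{ex-marginal}, and into the Poisson exponential inequality for Example~\ref{ex-poisson} (the paper cites Reynaud-Bouret's Proposition~7 rather than redoing the Laplace computation you sketch, but the content is the same). Your reduction of the density case to the marginals case by the $n^{-1}$ scaling is exactly what the paper does.

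The one genuine difference is the variance estimate. You prove the sharp pointwise inequality $(b-a)^{2}c^{2}/(a^{2}+b^{2})\le (c-a)^{2}+(c-b)^{2}$ via the perfect-square identity, obtaining $\int\psi^{2}s\,d\mu\le H^{2}(s,t)+H^{2}(s,t')\le 2y^{2}$. The paper instead writes $s/r=(\sqrt{s/r}-1+1)^{2}\le 2(\sqrt{s/r}-1)^{2}+2$, integrates, and then bounds $H^{2}(s,r)$ by concavity of the square root, arriving at the weaker $\int\psi^{2}s\,d\mu\le 3\cro{H^{2}(s,t)+H^{2}(s,t')}\le 6y^{2}$. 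Your route is cleaner and yields a constant three times smaller; since Assumption~\ref{bern} is an upper bound, your sharper variance automatically implies the announced values of $a,b,c$ (and in fact better ones). The paper's route has the minor advantage of not requiring one to spot the algebraic identity, but costs a factor of~$3$.
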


The proof of the proposition is delayed to Section~\ref{preuves-ex}. 

In order to select among the family of estimators $\ac{\hat s_{\lambda},\ \lambda\in\Lambda}$, we introduce an auxiliary family $\ac{\S_{m},\ m\in\M}$ of discrete subsets of $\LL_{0}$ satisfying the following assumption. 

\begin{ass}[$\tau,M$]\label{h-discret}
For all $m\in\M$ and $s\in\LL_{0}$, there exists $\eta_{m}\ge 1/2$ such that
\[
\ab{\S_{m}\cap\B(s,r\sqrt{\tau})}\le M\exp\pa{{r^{2}\over 2}},\ \ \forall r\ge 2\eta_{m}.
\]
\end{ass}
As we shall see, the parameter $\eta_{m}^{2}$ is convenient to measure the massiveness of the discrete set $\S_{m}$. It is related to a metric dimension (in a sense to be specified later on). 

Assumptions~\ref{bern} and~\ref{h-discret} are related to our former Assumption~\ref{hypo} by the following result.  
\begin{lemma}\label{lem1}
If Assumptions~\ref{bern} and~\ref{h-discret} hold with $\tau=4(2+cz)/(az^{2})$ then the collection of models $\ac{\S_{m},m\in\M}$ satisfy Assumption~\ref{hypo} with $\gamma=bM^{2}$ and $d_{m}=4\eta_{m}^{2}$ for all $m\in\M$. 
\end{lemma}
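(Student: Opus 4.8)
The plan is to verify Assumption~\ref{hypo} for the collection $\ac{\S_m,\ m\in\M}$ by a union bound over the countable discrete sets, controlling each term with the Bernstein-type inequality of Assumption~\ref{bern} and resumming with the help of the entropy bound of Assumption~\ref{h-discret}. Fix $(m,m')$, $\xi>0$ and $y$ with $y^2\ge\tau(d_m\vee d_{m'}+\xi)$ where $d_m=4\eta_m^2$. Writing $h=H(s,t)$, $h'=H(s,t')$ and $w^2=w^2(t,t',y)=(h^2+h'^2)\vee y^2$, both $t$ and $t'$ lie in $\B(s,w)$ since $h\vee h'\le w$, so Assumption~\ref{bern} applied with radius $w$ and level $zw^2$ gives
\[
\P\cro{Z(N,t,t')\ge zw^2}\le b\exp\pa{-\kappa w^2},\qquad \kappa={az^2\over 1+cz},
\]
after the simplification $a(zw^2)^2/(w^2+czw^2)=\kappa w^2$. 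A union bound then reduces the problem to estimating $\sum_{t\in\S_m}\sum_{t'\in\S_{m'}}b\,e^{-\kappa w^2}$.

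Next I would use the elementary inequality $w^2\ge\frac12(h^2+h'^2)+\frac12 y^2$, valid because $A\vee B\ge(A+B)/2$, to factorise the double sum:
\[
\sum_{t,t'}b\,e^{-\kappa w^2}\le b\, e^{-\kappa y^2/2}\,U_m\,U_{m'},\qquad U_m=\sum_{t\in\S_m}e^{-\kappa H^2(s,t)/2}.
\]
The per-set sum $U_m$ is controlled by rewriting it as $\int_0^\infty\kappa x\,e^{-\kappa x^2/2}F(x)\,dx$ with $F(x)=\ab{\S_m\cap\B(s,x)}$ and splitting the integral at $x_0=2\eta_m\sqrt\tau$. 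On $[0,x_0]$ one uses $F(x)\le F(x_0)\le Me^{2\eta_m^2}$ together with $\int_0^\infty\kappa x\,e^{-\kappa x^2/2}dx=1$; on $[x_0,\infty)$ one inserts $F(x)\le M e^{x^2/(2\tau)}$ from Assumption~\ref{h-discret} and integrates the resulting Gaussian tail, the convergence requiring $\kappa/2>1/(2\tau)$. This yields $U_m\le (1+\epsilon_m)Me^{2\eta_m^2}$ with a remainder $\epsilon_m$ that is small because $\eta_m\ge1/2$.

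The key arithmetic point is that the prescribed value $\tau=4(2+cz)/(az^2)$ gives $\kappa\tau=4(2+cz)/(1+cz)\ge 4$; this simultaneously guarantees $\kappa/2\ge 2/\tau>1/(2\tau)$ (so the tail integral converges and $\epsilon_m$ is negligible) and, via $y^2\ge\tau(4(\eta_m^2\vee\eta_{m'}^2)+\xi)$, the inequality $\kappa y^2/2\ge 8(\eta_m^2\vee\eta_{m'}^2)+2\xi$. Since $8(\eta_m^2\vee\eta_{m'}^2)\ge 2\eta_m^2+2\eta_{m'}^2$, combining these gives $\kappa y^2/2-(2\eta_m^2+2\eta_{m'}^2)\ge\xi$ with room to spare, so that
\[
b\,e^{-\kappa y^2/2}U_mU_{m'}\le bM^2 e^{\,2\eta_m^2+2\eta_{m'}^2-\kappa y^2/2}\le bM^2e^{-\xi},
\]
which is exactly Assumption~\ref{hypo} with $\gamma=bM^2$ and $d_m=4\eta_m^2$.

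The main obstacle is the bookkeeping of constants: one must check that the slack produced by $\kappa\tau\ge4$ (together with $\eta_m\ge1/2$) is large enough to absorb both the multiplicative remainder $(1+\epsilon_m)(1+\epsilon_{m'})$ in the bound for $U_mU_{m'}$ and the factor lost in passing from the maximum to the $\tfrac12$-averages, so that the conclusion comes out with precisely $\gamma=bM^2$ and $d_m=4\eta_m^2$ rather than larger values. The analytic heart is the estimate of $U_m$ through the entropy bound; everything else is the assembly of the Bernstein inequality and the two elementary convexity inequalities.
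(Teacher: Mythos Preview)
Your approach is correct and the constants do work out (the slack $\kappa\tau\ge4$ together with $\eta_m\ge1/2$ is more than enough to absorb the $(1+\eps_m)$ factors and still land on $\gamma=bM^2$), but it is a genuinely different route from the paper's. The paper does not factorise via $A\vee B\ge\tfrac12(A+B)$ nor compute a Laplace-type sum $U_m$; instead it performs a dyadic \emph{peeling} of $\S_m\times\S_{m'}$ into shells
\[
C_j=\ac{(t,t'):\ 2^{j-1}y^2<H^2(s,t)+H^2(s,t')\le 2^jy^2},
\]
bounds $|C_j|$ by $M^2\exp(2^jy^2/\tau)$ via Assumption~\ref{h-discret}, applies Assumption~\ref{bern} on each shell at level $z2^{j-1}y^2$, and then sums the resulting geometric series $\sum_{j\ge0}\exp(-2^jy^2/\tau)$ using $2^j\ge j+1$. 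Your convexity-plus-integral argument is slightly more elementary (no shell decomposition, a single application of the entropy bound in integral form) and gives a clean factorisation $U_mU_{m'}$; the paper's peeling argument is a bit more mechanical but avoids the need to check the tail-integral convergence and the $(1+\eps_m)$ remainder. Both exploit the same arithmetic identity $\kappa\tau=4(2+cz)/(1+cz)\ge4$ for the prescribed $\tau$, and both arrive at $bM^2e^{-\xi}$ with no extra constant in front.
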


Consider now the following selection procedure.
\begin{proc}\label{P2}
Let $\pen$ be some penalty function from $\S=\bigcup_{m\in\M}\S_{m}$ into $\R_{+}$. To each $\lambda\in\Lambda$, associate the auxiliary estimator $\tilde s_{\lambda}$ as any element of $\S$ satisfying 
\[
H^{2}\pa{\hat s_{\lambda},\tilde s_{\lambda}}+\pen(\tilde s_{\lambda})\le A(\hat s_{\lambda},\S)+\tau
\]
where 
\[
A(\hat s_{\lambda},\S)=\inf_{t\in\S}\cro{H^{2}\pa{\hat s_{\lambda},t}+\pen(t)}
\]
Select $\tilde \lambda$ among $\Lambda$ by using Procedure~\ref{P1} with the family of estimators $\ac{\tilde s_{\lambda},\ \lambda\in\Lambda}$. Finally, select $\hat \lambda$ as any element of $\Lambda$ such that 
\[
H^{2}(\hat s_{\hat \lambda},\tilde s_{\tilde \lambda})\le \inf_{\lambda\in\Lambda}H^{2}(\hat s_{\lambda},\tilde s_{\tilde \lambda})+\tau.
\]
\end{proc}

The following holds.
\begin{thm}\label{main2}
Let $M$ be a positive number and $\ac{\Delta_{m},\ m\in\M}$ a family of numbers satisfying~\eref{sigma}. Assume that Assumption~\ref{bern} and~\ref{h-discret} hold with $\tau=4(2+cz)/(az^{2})$. Let $\tilde s=\hat s_{\hat \lambda}$ be the estimator obtained by selecting $\hat \lambda$ according to Procedure~\ref{P2} with 
 \begin{equation}\label{def-pen2}
\pen(t)\ge z\tau\inf_{m\in\M(t)}\pa{4\eta_{m}^{2}+\Delta_{m}}\ \ \forall t\in\S.
\end{equation}
Then, for all $\xi>0$,
\[
\P\cro{H^{2}\pa{s,\tilde s}\ge C\cro{\inf_{\lambda\in\Lambda}\pa{H^{2}\pa{s,\hat s_{\lambda}}+A(\hat s_{\lambda},\S)}+\tau\xi}}\le \pa{bM^{2}\Sigma^{2} e^{-\xi}}\wedge 1,
\]
and
\begin{eqnarray*}
C'\E\cro{H^{2}\pa{s,\tilde s}}&\le& \E\cro{\inf_{\lambda\in\Lambda}\ac{H^{2}\pa{s,\hat s_{\lambda}}+A(\hat s_{\lambda},\S)}}+\tau\cro{(bM^{2}\Sigma^{2})\vee 1}\\
&\le& \inf_{\lambda\in\Lambda}\ac{\E\cro{H^{2}\pa{s,\hat s_{\lambda}}+A(\hat s_{\lambda},\S)}}+\tau\cro{(bM^{2}\Sigma^{2})\vee 1}.
\end{eqnarray*}
where $C,C'$  are positive numbers depending on $z$ only. 
\end{thm}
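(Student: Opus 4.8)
The plan is to reduce Theorem~\ref{main2} to Theorem~\ref{main} applied to the auxiliary family $\ac{\tilde s_\lambda,\ \lambda\in\Lambda}$, and then to transport the resulting oracle inequality back to the original family $\ac{\hat s_\lambda,\ \lambda\in\Lambda}$ using the triangle inequality for $H$. First I would invoke Lemma~\ref{lem1}: since Assumptions~\ref{bern} and~\ref{h-discret} hold with $\tau=4(2+cz)/(az^2)$, the collection $\ac{\S_m,\ m\in\M}$ satisfies Assumption~\ref{hypo} with $\gamma=bM^2$ and $d_m=4\eta_m^2$. With these $d_m$ the penalty requirement~\eref{def-pen2} is exactly~\eref{def-pen}, so Procedure~\ref{P1} run on $\ac{\tilde s_\lambda,\ \lambda\in\Lambda}$ (whose members all lie in $\S=\bigcup_m\S_m$) selects an index $\tilde\lambda$ to which Theorem~\ref{main} applies. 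This yields, for every $\xi>0$ and with probability at least $1-\pa{bM^2\Sigma^2e^{-\xi}}\wedge1$,
\[
H^2\pa{s,\tilde s_{\tilde\lambda}}\le C_1\inf_{\lambda\in\Lambda}\cro{H^2\pa{s,\tilde s_\lambda}+\pen(\tilde s_\lambda)}+C_2\tau\xi .
\]

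Next I would rewrite the oracle term over $\ac{\tilde s_\lambda}$ as one over $\ac{\hat s_\lambda}$. By the defining property of the auxiliary estimator, $H^2(\hat s_\lambda,\tilde s_\lambda)+\pen(\tilde s_\lambda)\le A(\hat s_\lambda,\S)+\tau$, while the triangle inequality gives $H^2(s,\tilde s_\lambda)\le 2H^2(s,\hat s_\lambda)+2H^2(\hat s_\lambda,\tilde s_\lambda)$. Since $\pen\ge0$, combining these yields
\[
H^2\pa{s,\tilde s_\lambda}+\pen(\tilde s_\lambda)\le 2H^2\pa{s,\hat s_\lambda}+2A(\hat s_\lambda,\S)+2\tau ,
\]
and hence $\inf_\lambda\cro{H^2(s,\tilde s_\lambda)+\pen(\tilde s_\lambda)}\le 2\inf_\lambda\cro{H^2(s,\hat s_\lambda)+A(\hat s_\lambda,\S)}+2\tau$.

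Then I would pass from $\tilde s_{\tilde\lambda}$ to the returned estimator $\tilde s=\hat s_{\hat\lambda}$. Here the last step of Procedure~\ref{P2} is essential: it guarantees $H^2(\hat s_{\hat\lambda},\tilde s_{\tilde\lambda})\le \inf_\lambda H^2(\hat s_\lambda,\tilde s_{\tilde\lambda})+\tau$, and the point of minimizing over the whole family (rather than simply setting $\hat\lambda=\tilde\lambda$) is that it avoids the term $A(\hat s_{\tilde\lambda},\S)$ attached to the random index $\tilde\lambda$, which cannot be controlled by the oracle. Bounding $\inf_\lambda H^2(\hat s_\lambda,\tilde s_{\tilde\lambda})\le 2\inf_\lambda H^2(s,\hat s_\lambda)+2H^2(s,\tilde s_{\tilde\lambda})$ and $H^2(s,\tilde s)\le 2H^2(s,\tilde s_{\tilde\lambda})+2H^2(\tilde s_{\tilde\lambda},\hat s_{\hat\lambda})$, everything collects, on the same event, into a bound of the form $H^2(s,\tilde s)\le C_3\inf_\lambda\cro{H^2(s,\hat s_\lambda)+A(\hat s_\lambda,\S)}+C_4\tau\xi+C_5\tau$. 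To reach the clean form $C\cro{\inf_\lambda(\ldots)+\tau\xi}$ I would absorb the residual $C_5\tau$ by observing that $\eta_m\ge1/2$ and $\Delta_m\ge1$ force $\pen(t)\ge z\tau\inf_{m\in\M(t)}\pa{4\eta_m^2+\Delta_m}\ge2z\tau$ for every $t\in\S$, so that $A(\hat s_\lambda,\S)\ge2z\tau$ and the oracle term is itself bounded below by a fixed multiple of $\tau$; this lets $C_5\tau$ be folded into the constant in front of the infimum.

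With the probability bound in hand, the risk bound follows by the same integration in $\xi$ as in Corollary~\ref{cor1}: writing $K=bM^2\Sigma^2$, one has $\int_0^\infty\pa{Ke^{-\xi}}\wedge1\,d\xi\le K\vee1$, which produces the additive term $\tau\cro{(bM^2\Sigma^2)\vee1}$. The main obstacle is not any single estimate but the bookkeeping of the repeated triangle inequalities together with the several $\tau$-slacks coming from the near-minimization steps; the one genuinely structural point to get right is the role of the final minimization in Procedure~\ref{P2}, which is precisely what prevents the random index $\tilde\lambda$ from propagating an uncontrolled $A(\hat s_{\tilde\lambda},\S)$ into the final bound.
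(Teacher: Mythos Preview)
Your proof is correct and follows essentially the same route as the paper: invoke Lemma~\ref{lem1} to verify Assumption~\ref{hypo} for $\ac{\S_m}$ with $d_m=4\eta_m^2$ and $\gamma=bM^2$, apply Theorem~\ref{main} to the auxiliary family $\ac{\tilde s_\lambda}$, then combine the triangle inequality, the definition of $\tilde s_\lambda$, and the final near-minimization step of Procedure~\ref{P2} to transfer the bound to $\tilde s=\hat s_{\hat\lambda}$, absorbing the residual $\tau$-slacks via the lower bound $\pen\ge z\tau$ (from $\Delta_m\ge 1$). The paper's write-up fixes an arbitrary $\lambda$ from the start rather than carrying the infimum through, but this is only a cosmetic difference in bookkeeping.
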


The risk bound we get involves the quantity $A(\hat s_{\lambda},\S)$ which depends on the approximation property of $\S$ with respect to the (random) family $\ac{\hat s_{\lambda}, \lambda\in\Lambda}\subset \SS$. In the favorable situation where the $\hat s_{\lambda}$ take their values in $\S$ and if equality holds in~\eref{def-pen2}, then 
\[
A(\hat s_{\lambda},\S)\le \pen(\hat s_{\lambda})\le z\tau\pa{4\eta^{2}_{\hat m(\lambda)}+\Delta_{\hat m(\lambda)}}.
\]
In a more general case, one needs to choose $\S$ to possess good approximation properties with respect to the elements of $\SS$ in order to keep the quantity $A(\hat s_{\lambda},\S)$ as small as possible for all $\lambda\in\Lambda$. To ensure such a property, it is convenient to choose $\S_{m}$ as a suitable discretization of $S_{m}$ for all $m\in\M$. 

\begin{defi}
Let $S$ be a subset of $(\LL_{0},H)$ and $\eps$ some positive number. We shall say that $\S$ is an $\eps$-net for $S$ if $\S\subset S$ and if for all $t\in S$, there exists $t'\in\S$ such that $H(t,t')\le \eps$. For nonnegative numbers $M,D$, we shall specify that $\S$ is an $(M,\eps,D)$-net for $S$ if for all $s\in \LL_{0}$ and $r\ge 2\eps$,
\begin{equation}\label{nbdept}
\ab{\ac{t\in\S,\ H(s,t)\le r}}\le M\exp\cro{D\pa{{r\over \eps}}^{2}}.
\end{equation}
\end{defi}

The parameter $D$ corresponds to an upper bound to what is usually called the metric dimension of $S$ (we refer to Birg\'e~\citeyearpar{MR2219712}, Definition~6). Under suitable assumptions and provided that the $\eps$-net has been suitably chosen, the metric dimension $D$ of $S$ provides an upper bound (up to a suitable renormalisation) for the minimax estimation rate over $S$. In many cases of interest, it turns that $D$ actually provides the right order of magnitude but, unfortunately, not always. For a complete discussion with examples and counter-examples on the connection between metric dimensions and minimax estimation rates we refer the reader to Birg\'e~\citeyearpar{MR722129} and Yang and Barron~\citeyearpar{MR1742500}. 

We deduce from Theorem~\ref{main2} the following corollary.

\begin{cor}\label{cor2ter}
Let $M$ be a positive number and $\ac{\Delta_{m},\ m\in\M}$ a family of nonnegative numbers satisfying~\eref{sigma}. Assume that Assumption~\ref{bern} holds and that for $m\in\M$, $\S_{m}$ is a $(M,\eta_{m}\sqrt{\tau},D_{m})$-net for $S_{m}$ with $\tau=4(2+cz)/(az^{2})$ and $\eta_{m}^{2}=2(D_{m}\vee 1/8)$. If equality holds in~\eref{def-pen2}, the estimator $\tilde s$ defined in Theorem~\ref{main2} satisfies
\begin{eqnarray}
\E\cro{H^{2}\pa{s,\tilde s}}&\le& C\inf_{\lambda\in\Lambda}\ac{\E\cro{H^{2}\pa{s,\hat s_{\lambda}}}+\tau\E\cro{D_{\hat m(\lambda)}\vee\Delta_{\hat m(\lambda)}}}\label{jolib}
\end{eqnarray}
where $C$ is a constant depending on $z,M$ and $\Sigma$ only.
\end{cor}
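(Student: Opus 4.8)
The plan is to derive the corollary directly from Theorem~\ref{main2}: I first check that the net hypotheses force Assumption~\ref{h-discret}, so that the theorem applies, and then I bound the approximation term $A(\hat s_\lambda,\S)$ appearing in its conclusion. For the first point, write $r=\rho\sqrt{\tau}$ in the defining inequality~\eref{nbdept} of an $(M,\eta_m\sqrt{\tau},D_m)$-net, which becomes $\ab{\ac{t\in\S_m,\ H(s,t)\le\rho\sqrt{\tau}}}\le M\exp\cro{D_m\pa{\rho/\eta_m}^2}$ for every $\rho\ge 2\eta_m$. With the prescribed $\eta_m^2=2\pa{D_m\vee 1/8}\ge 2D_m$ one has $D_m\pa{\rho/\eta_m}^2\le\rho^2/2$, which is precisely the bound required in Assumption~\ref{h-discret}; moreover $\eta_m^2\ge 1/4$ gives $\eta_m\ge 1/2$. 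Since Assumption~\ref{bern} is assumed and $\tau=4(2+cz)/(az^2)$, Theorem~\ref{main2} applies with $d_m=4\eta_m^2$ and $\gamma=bM^2$.

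Next I would bound $A(\hat s_\lambda,\S)=\inf_{t\in\S}\cro{H^2(\hat s_\lambda,t)+\pen(t)}$ pathwise. On a given realization let $\hat m(\lambda)$ be the random index with $\hat s_\lambda\in S_{\hat m(\lambda)}$. Because $\S_{\hat m(\lambda)}$ is an $\eta_{\hat m(\lambda)}\sqrt{\tau}$-net of $S_{\hat m(\lambda)}$, there is $t^{*}\in\S_{\hat m(\lambda)}\subset\S$ with $H^2(\hat s_\lambda,t^{*})\le\tau\eta_{\hat m(\lambda)}^2$, and since $t^{*}\in S_{\hat m(\lambda)}$ we have $\hat m(\lambda)\in\M(t^{*})$, so equality in~\eref{def-pen2} gives $\pen(t^{*})\le z\tau\pa{4\eta_{\hat m(\lambda)}^2+\Delta_{\hat m(\lambda)}}$. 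Using $t^{*}$ as a competitor in the infimum and substituting $\eta_{\hat m(\lambda)}^2=2\pa{D_{\hat m(\lambda)}\vee 1/8}$ yields
\[
A(\hat s_\lambda,\S)\le\tau\cro{(2+8z)\pa{D_{\hat m(\lambda)}\vee 1/8}+z\Delta_{\hat m(\lambda)}}.
\]
As $\Delta_m\ge 1$ by~\eref{sigma}, both $D_{\hat m(\lambda)}\vee 1/8$ and $\Delta_{\hat m(\lambda)}$ are at most $D_{\hat m(\lambda)}\vee\Delta_{\hat m(\lambda)}$, so $A(\hat s_\lambda,\S)\le\tau(2+9z)\pa{D_{\hat m(\lambda)}\vee\Delta_{\hat m(\lambda)}}$; this a.s.\ inequality passes to expectations.

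Finally I would substitute this into the inf-outside-expectation form of the bound in Theorem~\ref{main2}, obtaining
\[
C'\E\cro{H^2(s,\tilde s)}\le\inf_{\lambda}\ac{\E\cro{H^2(s,\hat s_\lambda)}+\tau(2+9z)\E\cro{D_{\hat m(\lambda)}\vee\Delta_{\hat m(\lambda)}}}+\tau\cro{(bM^2\Sigma^2)\vee 1}.
\]
To fold the additive remainder into the infimum I use $\Delta_m\ge 1$ once more, which makes $\E\cro{D_{\hat m(\lambda)}\vee\Delta_{\hat m(\lambda)}}\ge 1$ for every $\lambda$, so the constant term is dominated term by term by $\tau\cro{(bM^2\Sigma^2)\vee 1}\E\cro{D_{\hat m(\lambda)}\vee\Delta_{\hat m(\lambda)}}$; combining the two $\tau$-contributions and pulling out the resulting factor gives~\eref{jolib} with $C$ a function of $z$, $M$ and $\Sigma$ only (recall $b=1$ in the examples of Proposition~\ref{Examples}). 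No step is hard once Theorem~\ref{main2} is in hand; the only delicate point is the constant bookkeeping---verifying that $\eta_m^2=2(D_m\vee 1/8)$ is large enough to yield the entropy bound $D_m(\rho/\eta_m)^2\le\rho^2/2$ yet bounded below so that $\eta_m\ge 1/2$, and keeping the estimate of $A$ pathwise in the random index $\hat m(\lambda)$ before integrating, which is exactly what accommodates the random-model generality of the statement.
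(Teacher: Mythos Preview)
Your argument is correct and follows essentially the same route as the paper: verify that the choice $\eta_m^2=2(D_m\vee 1/8)$ forces Assumption~\ref{h-discret}, apply Theorem~\ref{main2}, and then bound $A(\hat s_\lambda,\S)$ pathwise using the $\eta_{\hat m(\lambda)}\sqrt{\tau}$-net property together with equality in~\eref{def-pen2}. The paper's proof is terser and handles the constant slightly differently---it uses $D_m\vee 1/8\le D_m+1/8$ and then $1/8\le\Delta_m/8$ (since $\Delta_m\ge 1$) to arrive at $A(\hat s_\lambda,\S)\le\tau\bigl(2(1+4z)D_{\hat m(\lambda)}+(1/4+2z)\Delta_{\hat m(\lambda)}\bigr)$, whereas you go directly via $D_m\vee 1/8\le D_m\vee\Delta_m$---but both are equivalent for the purpose of the corollary. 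One small remark: your parenthetical ``recall $b=1$ in the examples of Proposition~\ref{Examples}'' is not part of the hypotheses of the corollary; the additive term $\tau\bigl[(bM^2\Sigma^2)\vee 1\bigr]$ genuinely carries a $b$, so strictly speaking the final constant also depends on $b$ (the paper's statement is slightly loose on this point as well).
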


Since the statistician is free to choose $\hat m(\lambda)$ any element among $\M(\hat s_{\lambda})$, a natural choice in view of minimizing~\eref{jolib} is to take it as (any) minimizer of  $D_{m}\vee\Delta_{m}$ among those $m\in\M(\hat s_{\lambda})$.

If one considers a family of estimators $\ac{\hat s_{m},\ m\in\M}$ (here $\Lambda=\M$) such that $\hat s_{m}$ belongs to $S_{m}$ with probability one, we deduce from Corollary~\ref{cor2ter} that the estimator $\tilde s=\hat s_{\hat m}$ satisfies, 
\begin{equation}\label{eqz}
\E\cro{H^{2}\pa{s,\tilde s}}\le C\inf_{m\in\M}\ac{\E\cro{H^{2}\pa{s,\hat s_{m}}}+\tau\pa{D_{m}\vee\Delta_{m}}}.
\end{equation}
Moreover, if for some universal constant $c>0$ the estimators $\hat s_{m}$ satisfy 
\[
\E\cro{H^{2}\pa{s,\hat s_{m}}}\ge c\tau D_{m},\ \forall s\in\LL_{0}\ \ \forall m\in\M,
\]
then~\eref{eqz} shows that $\tilde s$ satisfies the oracle-type inequality
\[
\E\cro{H^{2}\pa{s,\tilde s}}\le C'\inf_{m\in\M}\ac{\E\cro{H^{2}\pa{s,\hat s_{m}}}\vee (\tau \Delta_{m})}.
\]

\section{Selecting among histogram-type estimators}\label{sect-histo}
In this section we assume that $\M$ is a family of partitions of $\X$ and for $m\in\M$, $S_{m}$ the set gathering the elements of $\LL_{0}$ which are piecewise constant on each element of the partition $m$, that is 
\[
S_{m}=\ac{\sum_{I\in m}a_{I}\1_{I}\telque\ (a_{I})_{I\in m}\in\R^{|m|}}\bigcap \LL_{0}.
\]
We shall therefore consider a family $\ac{S_{m},\ m\in\M}$ of such models and $\ac{\hat s_{\lambda},\ \lambda\in\Lambda}$ a family of estimators of the form $\sum_{I\in \hat m}\hat a_{I}\1_{I}$, the values $\hat a_{I}$ and the partition $\hat m\in\M$ being allowed to be random depending on the observation $N$. 

Throughout this section, we assume that $k=1$. The applications we have in mind include Examples~\ref{ex-densite} and~\ref{ex-poisson} and also the following statistical setting. 
\begin{ex}\label{ex-means}
We observe a vector $X=(X_{1},\ldots,X_{n})$ the components of which are independent and nonnegative with respective means $s_{i}$. Our aim is to estimate $s=(s_{1},\ldots,s_{n})$ on the basis of the observation of $X$. This statistical setting is a particular case of our general one described in Section~\ref{sect:I} by taking $k=1$, $\X=\ac{1,\ldots,n}$, $\A=\PP(\X)$, $\mu$ the counting measure on $(\X,\A)$, $\LL_{0}=\LL$ and $N$ the measure defined for $A\subset \X$ by 
\[
N(A)=\sum_{i\in A}X_{i}.
\]
\end{ex}
Among the distributions we have in mind for the $X_{i}$, we mention the Binomial or Gamma. 

For partitions $m,m'$ of $\X$, we set 
\[
\X^{2}(m)=\sum_{I\in m}\pa{\sqrt{N(I)}-\sqrt{\E(N(I))}}^{2}.
\]
and
\[
m\vee m'=\ac{I\cap I',\ (I,I')\in m\times m'}.
\]

\subsection{The assumptions}
We assume that $N$ satisfies
\begin{ass}\label{H3}
There exists a positive number $\tau$ such that for all $\xi>0$ and all partition $m$ of $\X$
\begin{equation}\label{deviation}
\P\cro{\X^{2}(m)\ge a\pa{|m|+\xi}}\le e^{-\xi}.
\end{equation}
\end{ass}

Besides, we assume that the family of partitions $\M$ satisfies the following
\begin{ass}\label{H4}
There exists $\delta\ge 1$ such that $|m\vee m'|\le \delta \pa{|m|\vee |m'|}$ for all $m,m'\in\M$.
\end{ass}

These two assumptions also appeared in Baraud et Birg\'e~\citeyearpar{MR2449129} as Assumptions H and H' in their Theorem~6. In particular, the following result is proven there
\begin{prop}
Assumption~\ref{H3} holds with $a=200/n$ in the case of Example~\ref{ex-densite}, with $a=6$ in the case of Example~\ref{ex-poisson} and, in the case of Example~\ref{ex-means}, with 
\[
a=3\kappa\pa{1/\sqrt{2}+\sqrt{\pa{{\beta\over \kappa}-{1\over 2}}_{+}}}
\]
provided that for some $\beta\ge 0$ and $\kappa>0$, the $X_{i}$ satisfy for $i=1,\ldots,n$ 
\[
\E\cro{e^{u\pa{X_i-s_i}}}\le \exp\cro{\kappa\frac{u^2s_i}{2(1-u\beta)}}\quad
\mbox{for all }u\in\left[0,\frac{1}{\beta}\right[,
\]
with the (convention $1/\beta=+\infty$ if  $\beta=0$), and
\[
\E\cro{e^{-u\pa{X_i-s_i}}}\le \exp\cro{\kappa \frac{u^2s_i}{2}} \ \quad\mbox{for
all }u\geq 0.
\]
\end{prop}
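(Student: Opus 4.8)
The statement is exactly the content of Theorem~6 in \citet{MR2449129} (their Assumptions H and H'), so the shortest route is simply to invoke that reference. Let me instead sketch a self-contained argument, which in all three cases rests on controlling the Laplace transform of $\X^{2}(m)=\sum_{I\in m}\pa{\sqrt{N(I)}-\sqrt{\E(N(I))}}^{2}$ and then applying a Chernoff bound. Writing $\sigma_{I}=\E(N(I))$ and $W_{I}=\sqrt{N(I)}-\sqrt{\sigma_{I}}$, the plan is to establish, for each block $I$ and every $\theta$ in a suitable interval $[0,\theta_{0})$, a sub-gamma bound of the form $\log\E\cro{e^{\theta W_{I}^{2}}}\le \theta v/(1-\theta w)$ with explicit $v,w$; to multiply these over the blocks, using independence of the $N(I)$ in Examples~\ref{ex-poisson} and~\ref{ex-means}; and to optimise the resulting tail bound at the value $t=a(|m|+\xi)$ so as to obtain precisely $e^{-\xi}$.

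The core estimate is the single-coordinate control of $W_{I}^{2}$, which I would derive from two elementary pointwise inequalities isolating the two tails. By concavity of $x\mapsto\sqrt{x}$ one has $\pa{\sqrt{y}-\sqrt{\sigma}}^{2}\le (y-\sigma)^{2}/(4\sigma)$ when $y\ge\sigma$, while subadditivity of the square root gives $\pa{\sqrt{\sigma}-\sqrt{y}}^{2}\le \sigma-y$ when $y<\sigma$. The second bound is what makes the lower tail of $W_{I}^{2}$ light, while the first transfers the heavier upper tail to the raw deviation $N(I)-\sigma_{I}$. In the means case (Example~\ref{ex-means}) I would sum the assumed one-sided exponential-moment inequalities over $i\in I$ to obtain sub-gamma control of $N(I)-\sigma_{I}$ with variance proxy $\kappa\sigma_{I}$ and scale $\beta$; feeding this into the two pointwise bounds and tracking constants produces the stated $a=3\kappa\pa{1/\sqrt{2}+\sqrt{\pa{\beta/\kappa-1/2}_{+}}}$, the asymmetry between $\kappa$ and $\beta$ being exactly the asymmetry between the two tail estimates. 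For the Poisson case (Example~\ref{ex-poisson}) the variance-stabilising property of the square root makes $W_{I}^{2}$ sub-exponential with mean close to $1/4$ uniformly in $\sigma_{I}$, and the same scheme yields the universal constant $a=6$.

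The density case (Example~\ref{ex-densite}) differs because $N(I)=B_{I}/n$ with $(B_{I})_{I\in m}$ a multinomial vector, so the blocks are dependent and the product step above does not apply directly. The cleanest remedy is a Poissonisation argument: replacing the fixed sample size $n$ by an independent $\mathrm{Poisson}(n)$ count renders the block counts independent Poisson, reduces matters to the scheme of Example~\ref{ex-poisson}, and is then de-Poissonised to recover the bound for fixed $n$. The $1/n$ scaling of $\X^{2}(m)$ and the larger numerical constant $200$ reflect the passage from Poisson to binomial square-root moment estimates.

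The main obstacle is the single-coordinate Laplace bound on $\E\cro{e^{\theta W_{I}^{2}}}$ \emph{uniformly} over all values of the mean $\sigma_{I}$, and in particular for small $\sigma_{I}$ where $\sqrt{\cdot}$ is most nonlinear: this is where the variance-stabilising effect of the square root must be made quantitative and where the explicit numerical constants are determined. The multinomial dependence in the density case is a secondary difficulty, resolved by the Poissonisation reduction described above.
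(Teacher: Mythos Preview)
Your opening sentence is exactly the paper's own ``proof'': the proposition is stated without argument and immediately attributed to Theorem~6 in Baraud and Birg\'e~\citeyearpar{MR2449129}, where it is proven. So on the comparison you are asked to make, you match the paper precisely.

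Your additional sketch goes well beyond what the paper provides. The two pointwise inequalities you isolate are indeed the heart of the argument in \citet{MR2449129}: the upper-tail bound $(\sqrt{y}-\sqrt{\sigma})^{2}\le (y-\sigma)^{2}/(4\sigma)$ transfers to a Bernstein-type control of $N(I)-\sigma_{I}$, and the lower-tail bound $(\sqrt{\sigma}-\sqrt{y})^{2}\le \sigma-y$ handles the sub-Gaussian side. One clarification on the density case: in \citet{MR2449129} the multinomial dependence is not handled by Poissonisation but rather by a direct martingale/Hoeffding argument on the empirical measure (their Lemma~1 and the surrounding computation), which is why the constant $200/n$ is somewhat loose; your Poissonisation route would also work and is arguably cleaner, but it is not the one actually used there.
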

Throughout this section, we set $\tau=20az^{-2}$.

\subsection{The main result}
\begin{thm}\label{histo}
Assume that Assumptions~\ref{H3} and~\ref{H4} hold and that $\ac{\Delta_{m},\ m\in\M}$ satisfies~\eref{sigma}. Consider a family $\ac{\hat s_{\lambda},\ \lambda\in\Lambda}$ of estimators of $s$ with values in $\SS$. If $\pen$ is such that 
\[
\pen(t)\ge z\tau\inf_{m\in\M(t)}\pa{\delta|m|+\Delta_{m}}\ \ \forall t\in\SS
\] 
the estimator $\tilde s=\hat s_{\hat \lambda}$ selected by Procedure~\ref{P1} satisfies for some constant $C$ depending on $z$ only, 
\[
C\E\cro{H^{2}\pa{s,\tilde s}}\le \E\cro{\inf_{\lambda\in\Lambda}\cro{H^{2}\pa{s,\hat s_{\lambda}}+\pen\pa{\hat s_{\lambda}}}}+\tau(\Sigma^{2}\vee 1).
\]
\end{thm}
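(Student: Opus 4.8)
The plan is to derive Theorem~\ref{histo} from Corollary~\ref{cor1}: I shall check that the family $\ac{S_{m},\ m\in\M}$ satisfies Assumption~\ref{hypo} with $\gamma=1$, $d_{m}=\delta|m|$ and $\tau=20az^{-2}$. With this choice $d_{m}=\delta|m|$, the penalty hypothesis of Theorem~\ref{histo} coincides with~\eref{def-pen}, and the announced bound is exactly the first inequality of Corollary~\ref{cor1}, since then $\gamma\Sigma^{2}=\Sigma^{2}$.

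The structural observation is that, for $t\in S_{m}$ and $t'\in S_{m'}$, both functions — and hence $\psi(t,t',\cdot)$ of~\eref{def-psi} — are constant on each cell $K$ of the common refinement $m\vee m'$. Denoting by $\psi_{K}$ and $t_{K},t'_{K}$ these constant values and setting $v_{K}=\sqrt{\E[N(K)]}=\sqrt{\int_{K}sd\mu}$, I would write
\[
Z(N,t,t')=\sum_{K\in m\vee m'}\psi_{K}\pa{N(K)-v_{K}^{2}}=\sum_{K}\psi_{K}\pa{\sqrt{N(K)}-v_{K}}\pa{\sqrt{N(K)}+v_{K}}.
\]
Splitting $\sqrt{N(K)}+v_{K}=\pa{\sqrt{N(K)}-v_{K}}+2v_{K}$, bounding the first part by $|\psi_{K}|\le 1/\sqrt{2}$ and the second by Cauchy--Schwarz, gives
\[
\ab{Z(N,t,t')}\le{1\over\sqrt{2}}\X^{2}(m\vee m')+2B\,\X(m\vee m'),\qquad B^{2}:=\sum_{K}\psi_{K}^{2}v_{K}^{2}.
\]

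The step I expect to require the most care is the bound $B^{2}\le H^{2}(s,t)+H^{2}(s,t')$. Since $\psi_{K}^{2}=\tfrac12(\sqrt{t_{K}}-\sqrt{t'_{K}})^{2}/(t_{K}+t'_{K})$, this reduces to the pointwise inequality
\[
\psi_{K}^{2}\,s(x)\le{1\over2}\cro{\pa{\sqrt{s(x)}-\sqrt{t_{K}}}^{2}+\pa{\sqrt{s(x)}-\sqrt{t'_{K}}}^{2}}\qquad(x\in K),
\]
which I would verify by checking that the difference of the two sides, viewed as a quadratic in $\sqrt{s(x)}$, has nonnegative leading coefficient (because $(\sqrt{t_{K}}-\sqrt{t'_{K}})^{2}\le t_{K}+t'_{K}$) and vanishing discriminant. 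Integrating over $K$ and summing then produces $2H^{2}(s,t)$ and $2H^{2}(s,t')$ on the right, whence $B^{2}\le H^{2}(s,t)+H^{2}(s,t')\le w^{2}(t,t',y)$.

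Finally I would make the estimate probabilistic. Crucially $\X^{2}(m\vee m')$ depends only on the partition $m\vee m'$, not on the particular pair $(t,t')$, so a single event controls the whole supremum over $S_{m}\times S_{m'}$. On $\ac{\X^{2}(m\vee m')\le a(|m\vee m'|+\xi)}$, whose complement has probability at most $e^{-\xi}$ by Assumption~\ref{H3}, Assumption~\ref{H4} gives $|m\vee m'|\le\delta(|m|\vee|m'|)=d_{m}\vee d_{m'}$, and $\tau=20az^{-2}$ yields, for $y^{2}\ge\tau(d_{m}\vee d_{m'}+\xi)$,
\[
\X^{2}(m\vee m')\le a\pa{d_{m}\vee d_{m'}+\xi}={z^{2}\over20}\,\tau\pa{d_{m}\vee d_{m'}+\xi}\le{z^{2}\over20}\,w^{2}(t,t',y).
\]
Writing $w$ for the positive square root of $w^{2}(t,t',y)$, so that $\X(m\vee m')\le(z/\sqrt{20})\,w$, and using $B\le w$ together with $z<1$, the two displayed bounds give $\ab{Z(N,t,t')}\le z\pa{\tfrac1{20\sqrt{2}}+\tfrac1{\sqrt5}}w^{2}<zw^{2}$ uniformly on $S_{m}\times S_{m'}$. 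Hence the supremum in Assumption~\ref{hypo} stays below $z$ off an event of probability at most $e^{-\xi}$, which establishes Assumption~\ref{hypo} with $\gamma=1$ and $d_{m}=\delta|m|$; Corollary~\ref{cor1} then concludes.
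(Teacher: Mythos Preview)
Your proof is correct and follows essentially the same route as the paper's: verify Assumption~\ref{hypo} with $d_{m}=\delta|m|$ via the same decomposition of $Z(N,t,t')$ through $\X^{2}(m\vee m')$, then invoke Corollary~\ref{cor1}. The one noteworthy difference is your bound $B^{2}=\int\psi^{2}s\,d\mu\le H^{2}(s,t)+H^{2}(s,t')$, obtained by a clean pointwise discriminant argument; the paper instead recycles the estimate $\int\psi^{2}s\,d\mu\le 3\bigl(H^{2}(s,t)+H^{2}(s,t')\bigr)$ from Section~\ref{preuves-ex}, giving a slightly worse constant that still fits under $\tau=20az^{-2}$. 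Your final step (working directly on the good event $\{\X^{2}\le a(|m\vee m'|+\xi)\}$) is also a minor streamlining of the paper's argument, which instead bounds the ratio $\X^{2}/y^{2}\vee\X/y$ and then passes to an inclusion of events.
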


The above result holds for any choices of estimators  $\ac{\hat s_{\lambda},\ \lambda\in\Lambda}$ with values in $\SS$. Of special interest are the estimators $\hat s_{m}$ associated to a partition $m$ of $\X$ by the formula
\begin{equation}\label{def-histo}
\hat s_{m}=\sum_{I\in m}{N(I)\over\mu(I)}\1_{I}.
\end{equation}
Since when $\mu(I)=0$, $\E(N(I))=\int_{I}sd\mu=0$ and $N(I)=0$ a.s., the estimator $\hat s_{m}$ is well-defined with the conventions $0/0=0$ and $c/\infty=0$ for all $c>0$. One can prove (we refer to Baraud and Birg\'e~\citeyearpar{MR2449129}) that for all $m\in\M$,
\[
\E\cro{H^{2}(s,\hat s_{m})}\le 4\pa{H^{2}(s,S_{m})+\tau|m|}.
\]
In the following sections, we shall apply Theorem~\ref{histo} in order to choose among a family of such estimators.

\subsection{Model selection}
Let $\M$ be a family partitions of $\X$ and associate to each $m\in\M$, the estimator $\hat s_{m}$ defined by~\eref{def-histo}. We deduce from Theorem~\ref{histo} the following corollary.

\begin{cor}\label{BB}
Assume that Assumptions~\ref{H3} and~\ref{H4} hold and that $\ac{\Delta_{m},\ m\in\M}$ satisfies~\eref{sigma}. Choose $\tilde s=\hat s_{\hat m}$ among $\ac{\hat s_{m},\ m\in\M}$ by using Procedure~\ref{P1} and
\[
\pen(\hat s_{m})=z\tau\pa{\delta|m|+\Delta_{m}}\ \ \forall m\in\M.
\]
Then there exists a constant $C$ depending on $z,\delta$ and $\Sigma$ only such that
\begin{eqnarray*}
C\E\cro{H^{2}\pa{s,\tilde s}}&\le& \inf_{m\in\M}\ac{\E\cro{H^{2}(s,\hat s_{m})}+\tau\pa{|m|\vee \Delta_{m}}}\\
&\le& \inf_{m\in\M}\cro{H^{2}\pa{s,S_{m}}+\tau(|m|\vee \Delta_{m})}.
\end{eqnarray*}
\end{cor}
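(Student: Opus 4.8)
The plan is to obtain Corollary~\ref{BB} as a direct specialization of Theorem~\ref{histo}, taking $\Lambda=\M$ and the particular family $\ac{\hat s_{m},\ m\in\M}$ given by~\eref{def-histo}, and then to carry out some elementary bookkeeping of constants. First I would check that the prescribed penalty $\pen(\hat s_{m})=z\tau\pa{\delta|m|+\Delta_{m}}$ meets the hypothesis of Theorem~\ref{histo}. Since the histogram $\hat s_{m}$ defined by~\eref{def-histo} belongs to $S_{m}$ with probability one, we have $\hat s_{m}\in\SS$ and $m\in\M(\hat s_{m})$, so that $\inf_{m'\in\M(\hat s_{m})}\pa{\delta|m'|+\Delta_{m'}}\le \delta|m|+\Delta_{m}$; hence the chosen penalty satisfies $\pen(t)\ge z\tau\inf_{m'\in\M(t)}\pa{\delta|m'|+\Delta_{m'}}$ as required. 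Theorem~\ref{histo} then provides a constant $C_{0}=C_{0}(z)$ with
\[
C_{0}\E\cro{H^{2}\pa{s,\tilde s}}\le \E\cro{\inf_{m\in\M}\ac{H^{2}\pa{s,\hat s_{m}}+z\tau\pa{\delta|m|+\Delta_{m}}}}+\tau\pa{\Sigma^{2}\vee 1}.
\]

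Next I would bound the right-hand side. Using $\E\cro{\inf}\le\inf\E$ I pass the expectation inside the infimum, obtaining $\inf_{m}\ac{\E\cro{H^{2}\pa{s,\hat s_{m}}}+z\tau\pa{\delta|m|+\Delta_{m}}}$. Since $\delta\ge 1$ (Assumption~\ref{H4}), one has $\delta|m|+\Delta_{m}\le 2\delta\pa{|m|\vee\Delta_{m}}$, and the additive term $\tau\pa{\Sigma^{2}\vee 1}$ is absorbed into the infimum by noting that~\eref{sigma} forces $\Delta_{m}\ge 1$, hence $|m|\vee\Delta_{m}\ge 1$ and $\tau\pa{\Sigma^{2}\vee 1}\le \tau\pa{\Sigma^{2}\vee 1}\pa{|m|\vee\Delta_{m}}$ for every $m$. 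Collecting these gives $C_{0}\E\cro{H^{2}\pa{s,\tilde s}}\le \inf_{m}\ac{\E\cro{H^{2}\pa{s,\hat s_{m}}}+c_{1}\tau\pa{|m|\vee\Delta_{m}}}$ with $c_{1}=2z\delta+\pa{\Sigma^{2}\vee 1}\ge 1$, and since $c_{1}\ge 1$ the factor $c_{1}$ can be pulled out of the infimum at the cost of relabelling the constant, which yields the first displayed inequality with a constant $C$ depending on $z,\delta,\Sigma$ only.

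For the second displayed inequality I would invoke the risk bound for histogram estimators recalled just after~\eref{def-histo}, namely $\E\cro{H^{2}\pa{s,\hat s_{m}}}\le 4\pa{H^{2}\pa{s,S_{m}}+\tau|m|}$. Because $|m|\le |m|\vee\Delta_{m}$, this gives $\E\cro{H^{2}\pa{s,\hat s_{m}}}+\tau\pa{|m|\vee\Delta_{m}}\le 5\pa{H^{2}\pa{s,S_{m}}+\tau\pa{|m|\vee\Delta_{m}}}$, and taking the infimum over $m$ and absorbing the factor $5$ into $C$ delivers the bottom line. (Conversely, since $\hat s_{m}\in S_{m}$ a.s.\ one has $H^{2}\pa{s,S_{m}}\le \E\cro{H^{2}\pa{s,\hat s_{m}}}$, so the two right-hand sides are of the same order, which is why the display chains them.)

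I do not expect a genuine obstacle here: all the analytic work is carried by Theorem~\ref{histo}. The only points demanding care are the verification that the stated penalty dominates the infimum appearing in the hypothesis of Theorem~\ref{histo}, and the constant bookkeeping — in particular the absorption of the additive term $\tau\pa{\Sigma^{2}\vee 1}$ into the infimum, which relies essentially on the normalization $\Delta_{m}\ge 1$.
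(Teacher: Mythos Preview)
Your proof is correct and follows exactly the approach the paper intends: the corollary is stated immediately after Theorem~\ref{histo} with the words ``We deduce from Theorem~\ref{histo} the following corollary,'' and the paper gives no further argument. You have carefully filled in the routine steps --- verifying that the prescribed penalty dominates the infimum required by Theorem~\ref{histo}, absorbing the additive $\tau(\Sigma^{2}\vee 1)$ via $\Delta_{m}\ge 1$, and invoking the histogram risk bound $\E\!\left[H^{2}(s,\hat s_{m})\right]\le 4\!\left(H^{2}(s,S_{m})+\tau|m|\right)$ for the second inequality --- exactly as intended.
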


This corollary recovers the results of Theorem~6 in Baraud and Birg\'e~\citeyearpar{MR2449129} even though the selection procedure is different. The choice of a suitable family $\M$ of partitions is of course a crucial point. It should be chosen in such a way that the family $\ac{S_{m},\ m\in\M}$ possesses good approximation properties with respect to classes of functions $s$ of interest. This point has been discussed in Baraud and Birg\'e~\citeyearpar{MR2449129} (see their Section~3). Another concern is the computational cost. In the case of density estimation, alternative selection procedures based on the minimization of a penalized criterion over families $\M$ generated by an algorithm such as CART (or some related version) can be less time consuming. We refer for example to Blanchard {\it et al}~\citeyearpar{MR2177922} which considers families of partitions associated to some dyadic decision trees. Their algorithm is inspired from that of Donoho~\citeyearpar{MR1474073} in the context of regression in 2D. 

In view of reducing the computation cost of our selection procedure, we extend Corollary~\ref{BB} to the case where the partitions $m$ are possibly random, generated from the data themselves.  

\subsection{Selecting among model selection strategies}
Assume now that each $\lambda\in\Lambda$ is a model selection strategy allowing to choose a partition $\hat m(\lambda)$ among a collection of candidate partitions $\M$. Besides, to each $\lambda\in\Lambda$, associate the estimator $\hat s_{\lambda}=\hat s_{\hat m(\lambda)}$ with $\hat s_{m}$ defined by~\eref{def-histo} for all $m\in\M$. 

By applying Theorem~\ref{histo} to the collection $\ac{\hat s_{\lambda},\ \lambda\in\Lambda}$ we get the following result. 

\begin{cor}\label{BSA}
Assume that Assumptions~\ref{H3},~\ref{H4} hold and that $\ac{\Delta_{m},\ m\in\M}$ satisfies~\eref{sigma}. Choose $\tilde s=\hat s_{\hat \lambda}$ among $\ac{\hat s_{\lambda},\ \lambda\in\Lambda}$ by using Procedure~\ref{P1} and 
\[
\pen(\hat s_{\lambda})=z\tau\pa{\delta|\hat m(\lambda)|+\Delta_{\hat m(\lambda)}}\ \ \forall \lambda\in\Lambda.
\] 
Then, for some constant $C$ depending on $z,\delta$ and $\Sigma$ only, 
\[
C\E\cro{H^{2}\pa{s,\tilde s}}\le \inf_{\lambda\in\Lambda}\E\cro{H^{2}\pa{s,\hat s_{\lambda}}+\tau\pa{|\hat m(\lambda)|\vee\Delta_{\hat m(\lambda)}}}.
\]
\end{cor}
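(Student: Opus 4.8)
The plan is to deduce the corollary directly from Theorem~\ref{histo}, applied to the family $\ac{\hat s_{\lambda},\ \lambda\in\Lambda}$ with $\hat s_{\lambda}=\hat s_{\hat m(\lambda)}$, and then to rewrite the resulting bound in the announced form by elementary manipulations. Since $\hat s_{\lambda}$ is of the histogram form associated to the partition $\hat m(\lambda)\in\M$, it takes its values in $\SS=\bigcup_{m\in\M}S_{m}$, so Theorem~\ref{histo} is indeed applicable once the penalty is checked to be admissible.

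First I would verify that the prescribed penalty meets the lower-bound requirement of Theorem~\ref{histo}. Because $\hat s_{\lambda}=\hat s_{\hat m(\lambda)}$ belongs to $S_{\hat m(\lambda)}$, we have $\hat m(\lambda)\in\M(\hat s_{\lambda})$, hence
\[
\pen(\hat s_{\lambda})=z\tau\pa{\delta|\hat m(\lambda)|+\Delta_{\hat m(\lambda)}}\ge z\tau\inf_{m\in\M(\hat s_{\lambda})}\pa{\delta|m|+\Delta_{m}},
\]
so the penalty condition holds at every estimator of the family. Theorem~\ref{histo} then produces a constant $C_{0}=C_{0}(z)$ with
\[
C_{0}\,\E\cro{H^{2}\pa{s,\tilde s}}\le \E\cro{\inf_{\lambda\in\Lambda}\pa{H^{2}\pa{s,\hat s_{\lambda}}+\pen(\hat s_{\lambda})}}+\tau\pa{\Sigma^{2}\vee 1}.
\]

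Next I would pass from the expectation of the infimum to the infimum of the expectations, using $\E\cro{\inf_{\lambda}X_{\lambda}}\le\inf_{\lambda}\E\cro{X_{\lambda}}$ with $X_{\lambda}=H^{2}(s,\hat s_{\lambda})+\pen(\hat s_{\lambda})$. To replace the penalty by the quantity $\tau(|\hat m(\lambda)|\vee\Delta_{\hat m(\lambda)})$ of the statement, I would invoke $\delta\ge1$ (Assumption~\ref{H4}) to get $\delta|\hat m(\lambda)|+\Delta_{\hat m(\lambda)}\le 2\delta\pa{|\hat m(\lambda)|\vee\Delta_{\hat m(\lambda)}}$, so that $\pen(\hat s_{\lambda})\le 2z\delta\,\tau\pa{|\hat m(\lambda)|\vee\Delta_{\hat m(\lambda)}}$ and consequently $\inf_{\lambda}\E\cro{X_{\lambda}}\le(2z\delta\vee1)\inf_{\lambda}\E\cro{H^{2}(s,\hat s_{\lambda})+\tau(|\hat m(\lambda)|\vee\Delta_{\hat m(\lambda)})}$. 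Finally I would absorb the residual term $\tau(\Sigma^{2}\vee1)$: since $\Delta_{m}\ge1$ for all $m$ by~\eref{sigma}, each summand obeys $\tau(|\hat m(\lambda)|\vee\Delta_{\hat m(\lambda)})\ge\tau$, whence $\inf_{\lambda}\E\cro{H^{2}(s,\hat s_{\lambda})+\tau(|\hat m(\lambda)|\vee\Delta_{\hat m(\lambda)})}\ge\tau$, which lets me bound $\tau(\Sigma^{2}\vee1)$ by $(\Sigma^{2}\vee1)$ times that infimum. Collecting the two multiplicative constants gives
\[
C_{0}\,\E\cro{H^{2}\pa{s,\tilde s}}\le \cro{(2z\delta\vee1)+(\Sigma^{2}\vee1)}\inf_{\lambda\in\Lambda}\E\cro{H^{2}\pa{s,\hat s_{\lambda}}+\tau\pa{|\hat m(\lambda)|\vee\Delta_{\hat m(\lambda)}}},
\]
and dividing by the bracketed factor yields the claim with $C=C_{0}/\cro{(2z\delta\vee1)+(\Sigma^{2}\vee1)}$, depending on $z,\delta,\Sigma$ only.

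The computation is entirely routine; the only point deserving care is the first one, namely that $\pen(\hat s_{\lambda})$, although written through the possibly random index $\hat m(\lambda)$, is admissible as a penalty on $\SS$ in the sense of Theorem~\ref{histo}. This is precisely what the membership $\hat m(\lambda)\in\M(\hat s_{\lambda})$ guarantees, by furnishing the required lower bound. Beyond this verification no new probabilistic input is needed, since all of the concentration work already resides in Theorem~\ref{histo} (itself resting on Assumption~\ref{H3}); the present argument merely reorganizes its conclusion.
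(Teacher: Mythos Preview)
Your proposal is correct and follows exactly the route the paper itself takes: the paper derives this corollary directly from Theorem~\ref{histo} (stating only ``By applying Theorem~\ref{histo} to the collection $\ac{\hat s_{\lambda},\ \lambda\in\Lambda}$ we get the following result'') without spelling out the elementary manipulations, which you have filled in correctly. Your explicit check that the penalty, though indexed through the random $\hat m(\lambda)$, satisfies the lower bound required by Theorem~\ref{histo} via $\hat m(\lambda)\in\M(\hat s_{\lambda})$, and your absorption of the residual $\tau(\Sigma^{2}\vee 1)$ using $\Delta_{m}\ge 1$, are exactly the details one needs to turn the paper's one-line justification into a complete argument.
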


Note that Corollary~\ref{BSA} shows that the risk of $\tilde s$ can be related to those of the $\hat s_{\lambda}$ but gives no hint on the orders of magnitude of the latters. Such a study is beyond the scope of this paper. In density estimation, Lugosi and Nobel~\citeyearpar{MR1394983} tackled this problem by giving sufficient condition on the random partition $\hat m(\lambda)$ to ensure the $\IL_{1}$-consistency of the estimator $\hat s_{\lambda}$, that is, under suitable conditions, they show that
\[
\int_{\X}\ab{s-\hat s_{\lambda}}d\mu\to 0\ a.s. 
\]
as the sample size tends to infinity. Since 
\[
H^{2}(s,\hat s_{\lambda})\le \int_{\X}\ab{s-\hat s_{\lambda}}d\mu,
\]
the same holds for distance $H$ and by dominated convergence, we deduce that $\E\cro{H^{2}(s,\hat s_{\lambda})}$ also tends to 0 as the sample size tends to infinity.

We end this section by giving a simple way of choosing a family of partitions from the data by mean of a contrast. We shall assume for simplicity that $\X=[0,1)$ and consider the family $\M$  of partitions of $[0,1)$ into intervals of the form $[a,b)$ the endpoints of which belong to the regular grid $\ac{k/N,\ k=0,\ldots,N}$ with $N\ge 2$. For such a family, it is easy to check that the choice $\Delta_{m}=|m|\log(N-1)$ ensures that~\eref{sigma} holds with $\Sigma\le e$. In what follows, the notation $m\preceq m'$ for $m,m'\in\M$ means that the partition $m'$ is thinner than $m$ or equivalently that $S_{m}\subset S_{m'}$. Let us now introduce the criterion $\crit(N,t)$ defined for $t\in\SS$ by 
\[
\crit(N,t)=-2\int_{\X}tdN+\int_{\X}t^{2}d\mu.
\]
It is well-known that $\crit(N,.)$ is a contrast on $\SS$ and that if $s$ belongs to $\IL^{2}([0,1),\mu)$, for all $t,t'\in\SS$
\begin{equation}\label{contraste}
\E\cro{\crit(N,t)-\crit(N,t')}=\int_{\X}\pa{s-t}^{2}d\mu-\int_{\X}\pa{s-t'}^{2}d\mu.
\end{equation}
Then, given a partition $m\in\M$ it is natural to associate to $S_{m}$ the estimator obtained by minimizing $\crit(N,t)$ among those $t$ in $S_{m}$. It turns out that such a minimizer is actually given by $\hat s_{m}$. Since $|\M|=2^{N-1}$ is large for large values of $N$, we shall not consider the whole family of estimators $\ac{\hat s_{m},\ m\in\M}$ over which our selection procedure could be practically useless and rather focus on the (random) subfamily defined as follows. Let $\Lambda=\ac{1,\ldots,N}$ and define $\hat m(1)$ the partition of $[0,1)$ reduced to $\ac{[0,1)}$. Then for $\lambda\ge 2$, define by induction $\hat m(\lambda)$ as the random partition minimizing $\crit(N,\hat s_{m})$ among those $m\in\M$ satisfying both $\hat m(\lambda-1)\preceq m$ and $|m|=\lambda$ (in case of equality take one at random among the minimizers). Since for all $\lambda\in\Lambda$, $S_{\hat m(\lambda-1)}\subset S_{\hat m(\lambda)}$, note that the map $\lambda\mapsto \crit(N,\hat s_{\hat m(\lambda)})$ is decreasing with $\lambda$ and that $\hat m(N)$ corresponds to the regular partition based on the grid $\ac{k/N,\ k=0,\ldots,N}$. Finally, set for $\lambda\in\Lambda$, $\hat s_{\lambda}=\hat s_{\hat m(\lambda)}$. For such a family,  our procedure requires at most $N^{2}$ steps to obtain the family of partitions (for each value $\lambda$, finding $\hat m(\lambda)$ requires at most $N$ computations) and at most $N^{2}$ additional steps are required to proceed at the comparison pair by pair of the estimators $\hat s_{\lambda}$ to finally get $\tilde s=\hat s_{\hat \lambda}$. Consequently, the whole procedure requires of order $N^{2}$ steps and it follows from Corollary~\ref{BSA} that $\tilde s$ satisfies 
\[
C\E\cro{H^{2}\pa{s,\tilde s}}\le \inf_{\lambda\in\ac{1,\ldots,N}}\ac{\E\cro{H^{2}\pa{s,\hat s_{\lambda}}}+\tau\lambda\log(N-1)}.
\]

\section{Selecting among points}\label{sect-discret}
We assume here that the estimators $\hat s_{\lambda}$ are deterministic. In order to emphasize the fact that they do not depend on $N$, these will be denoted $s_{\lambda}$ hereafter. The aim of this section is to show that our selection procedure allows to select among arbitrary points in $\LL_{0}$ and also provides an alternative to the procedure based on testing proposed in Birg\'e~\citeyearpar{MR2219712} for the construction of $T$-estimators. The proofs of the following Propositions are delayed to Section~\ref{preuves-cor}. 

\subsection{Aggregation of arbitrary points}
Let $\ac{s_{\lambda},\ \lambda\in\Lambda}$ be a countable family of arbitrary points of $\LL_{0}$. Typically, one should think of the $s_{\lambda}$ as estimators of $s$ based on an independent copy $N'$ of $N$. In this case, with no loss of generality we may assume that $\Lambda=\M$ and $S_{m}=\ac{s_{m}}$ for all $m\in\M$. Then, the following result should be understood as conditional to $N'$.\\

\begin{prop}\label{agreg}
Assume that Assumption~\ref{bern} holds, set $\tau=4(2+cz)/(az^{2})$, and take $\ac{\Delta_{m},\ m\in\M}$ satisfying~\eref{sigma}. Choose  
$\tilde s=s_{\hat m}$ among $\ac{s_{m},\ m\in\M}$ according to Procedure~\ref{P1} with  
\[
\pen(s_{m})= z\tau\Delta_{m},\ \ \forall m\in\M.
\]
Then,  
\[
\E\cro{H^{2}\pa{s,\tilde s}}\le C\inf_{m\in\M}\cro{H^{2}\pa{s, s_{m}}+\tau\Delta_{m}}
\]
where $C$ depends on $z,b,M$ and $\Sigma$ only.
\end{prop}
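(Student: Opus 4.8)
The plan is to read this statement as the special case of the direct-selection framework of Section~\ref{sect-main} in which $\Lambda=\M$, each model is the singleton $S_{m}=\ac{s_{m}}$ with the points $s_{m}$ deterministic, and then to invoke Theorem~\ref{main} together with the first inequality of Corollary~\ref{cor1}. The only thing that needs checking is that these one-point models satisfy Assumption~\ref{hypo} for the prescribed $\tau$, and --- this is the crucial point --- that they do so with $d_{m}=0$ for every $m$. Indeed, with $d_{m}=0$ the penalty condition~\eref{def-pen} reads $\pen(s_{m})\ge z\tau\inf_{m'\in\M(s_{m})}\Delta_{m'}$, so the choice $\pen(s_{m})=z\tau\Delta_{m}$ is admissible. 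Note that the generic value $d_{m}=4\eta_{m}^{2}\ge1$ supplied by Lemma~\ref{lem1} would license only the larger penalty $z\tau(1+\Delta_{m})$; the improvement to $d_{m}=0$ reflects the absence of any union bound inside a single-point model.

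The heart of the argument is thus a direct verification of Assumption~\ref{hypo} with $d_{m}=0$ and $\gamma=b$, and I expect this to be the only delicate step. Fix a pair $(m,m')$. Since $S_{m}\times S_{m'}$ reduces to the single pair $(s_{m},s_{m'})$, the supremum in Assumption~\ref{hypo} disappears and one only has to bound $\P\cro{Z(N,s_{m},s_{m'})\ge zw^{2}}$, where $w^{2}=w^{2}(s_{m},s_{m'},y)=[H^{2}(s,s_{m})+H^{2}(s,s_{m'})]\vee y^{2}$ is a deterministic number. The key observation is that $w^{2}\ge H^{2}(s,s_{m})+H^{2}(s,s_{m'})$, so that both $s_{m}$ and $s_{m'}$ belong to the ball $\B(s,w)$; Assumption~\ref{bern} may then be applied with radius $w$ and deviation level $zw^{2}$, giving
\[
\P\cro{Z(N,s_{m},s_{m'})\ge zw^{2}}\le b\exp\cro{-{a(zw^{2})^{2}\over w^{2}+czw^{2}}}=b\exp\cro{-{az^{2}w^{2}\over 1+cz}}.
\]
Since $d_{m}\vee d_{m'}=0$, the admissible range in Assumption~\ref{hypo} is $y^{2}\ge\tau\xi$, whence $w^{2}\ge y^{2}\ge\tau\xi$; plugging $\tau=4(2+cz)/(az^{2})$ into the last exponent turns it into $az^{2}w^{2}/(1+cz)\ge[4(2+cz)/(1+cz)]\xi\ge\xi$, which yields the required bound $be^{-\xi}$. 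This is precisely Assumption~\ref{hypo} with $d_{m}=0$ and $\gamma=b$, and it is the self-normalising choice of the Bernstein radius --- taking it equal to $w$ rather than to a fixed $y$ --- that makes the arithmetic close.

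To conclude I would apply the first inequality of Corollary~\ref{cor1} with $\pen(s_{m})=z\tau\Delta_{m}$ and $\gamma=b$. Because the $s_{m}$ are deterministic the expectations over $\lambda$ disappear, and one obtains
\[
C\,\E\cro{H^{2}(s,\tilde s)}\le\inf_{m\in\M}\ac{H^{2}(s,s_{m})+z\tau\Delta_{m}}+\tau\cro{(b\Sigma^{2})\vee1}.
\]
It then remains to absorb the additive constant: since $\Delta_{m}\ge1$, the infimum on the right is at least $z\tau$, so $\tau[(b\Sigma^{2})\vee1]$ is a bounded multiple of that infimum; combining this with $z\tau\Delta_{m}\le\tau\Delta_{m}$ folds everything into a single constant and produces the announced inequality
\[
\E\cro{H^{2}(s,\tilde s)}\le C\inf_{m\in\M}\cro{H^{2}(s,s_{m})+\tau\Delta_{m}}
\]
with $C$ depending only on $z$, $b$ and $\Sigma$.
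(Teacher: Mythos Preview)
Your proof is correct and follows essentially the same route as the paper: take $\Lambda=\M$, $S_{m}=\{s_{m}\}$, verify Assumption~\ref{hypo} with $d_{m}=0$ and $\gamma=b$, then apply Corollary~\ref{cor1} and absorb the additive term using $\Delta_{m}\ge1$.

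The one genuine difference is in how you reach $d_{m}=0$. The paper invokes Lemma~\ref{lem1} after observing that singletons satisfy Assumption~\ref{h-discret} with $M=1$ and ``$\eta_{m}=0$''. Formally this is awkward, since Assumption~\ref{h-discret} requires $\eta_{m}\ge1/2$, and Lemma~\ref{lem1} as stated would then only yield $d_{m}=4\eta_{m}^{2}\ge1$ --- exactly the point you flag. Your direct Bernstein computation with the self-normalising radius $w$ bypasses this entirely and cleanly produces $d_{m}=0$; it is in effect the one-term specialisation of the proof of Lemma~\ref{lem1}, where the peeling sum collapses and the constraint $\eta_{m}\ge1/2$ (used there only to make the geometric tail converge) becomes irrelevant. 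So your argument is the same in spirit but tighter in execution.
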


Our procedure also allows to handle the problem of convex aggregation from i.i.d. observations in the same way as Birg\'e did in Section~9 of Birg\'e~\citeyearpar{MR2219712}. We shall not detail this in the present paper and rather refer to the paper by Birg\'e for examples and references.

\subsection{Selecting among discretized subsets of $\LL_{0}$}
For each $m\in\M$, let $\S_{m}=\ac{s_{\lambda},\ \lambda\in\Lambda(m)}$ be a discrete subset of $\LL_{0}$. Taking $\Lambda=\bigcup_{m\in\M}\Lambda(m)$, we consider the family $\ac{s_{\lambda},\ \lambda\in\Lambda}$ obtained by gathering all these discretization points.  The following holds. 

\begin{prop}\label{Lucien}
Let $M$ be a positive number and $\ac{\Delta_{m},\ m\in\M}$ a family of nonnegative numbers satisfying~\eref{sigma}. Assume that Assumptions~\ref{bern} and~\ref{h-discret} hold with $\tau=4(2+cz)/(az^{2})$. By applying Procedure~\ref{P1} with the family of estimators $\ac{s_{\lambda},\ \lambda\in\Lambda}$ and 
\[
\pen(t)= z\tau\inf\ac{4\eta_{m}^{2}+\Delta_{m},\ m\in\M(t)},\ \ \forall t\in\S
\]
the estimator $\tilde s$ satisfies 
\begin{equation}\label{pasmal}
\E\cro{H^{2}\pa{s,\tilde s}}\le C\inf_{m\in\M}\cro{H^{2}\pa{s,\S_{m}}+\tau\pa{\eta_{m}^{2}\vee \Delta_{m}}},
\end{equation}
where $C$ depends on $z,b,M$ and $\Sigma$ only.

If moreover $\S_{m}$ is a $(M,\eta_{m}\sqrt{\tau},D_{m})$-net for $S_{m}$ with $\eta_{m}^{2}=2(D_{m}\vee 1/8)$ for all $m\in\M$, then, 
\begin{equation}\label{b1}
\E\cro{H^{2}\pa{s,\tilde s}}\le C'\inf_{m\in\M}\cro{H^{2}\pa{s,S_{m}}+\tau\pa{D_{m}\vee \Delta_{m}}},
\end{equation}
where $C'$ depends on $z,b,M$ and $\Sigma$ only. 
\end{prop}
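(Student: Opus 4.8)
The plan is to deduce both inequalities directly from Theorem~\ref{main} and its Corollary~\ref{cor1}, applied to the discrete models $\ac{\S_{m},\ m\in\M}$, treating each point $s_{\lambda}$ as a deterministic estimator with values in $\SS=\bigcup_{m\in\M}\S_{m}$. First I would invoke Lemma~\ref{lem1}: since Assumptions~\ref{bern} and~\ref{h-discret} hold with $\tau=4(2+cz)/(az^{2})$, the collection $\ac{\S_{m},\ m\in\M}$ satisfies Assumption~\ref{hypo} with $\gamma=bM^{2}$ and $d_{m}=4\eta_{m}^{2}$. The prescribed penalty is precisely $\pen(t)=z\tau\inf\ac{4\eta_{m}^{2}+\Delta_{m},\ m\in\M(t)}$, i.e.\ equality holds in~\eref{def-pen}. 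Hence~\eref{joli} of Corollary~\ref{cor1}, with $v^{2}$ given by~\eref{v}, applies; because the $s_{\lambda}$ are deterministic all expectations over the estimators drop out, yielding
\[
\E\cro{H^{2}(s,\tilde s)}\le C\inf_{\lambda\in\Lambda}\ac{H^{2}(s,s_{\lambda})+\tau\inf_{m\in\M(s_{\lambda})}\pa{4\eta_{m}^{2}\vee\Delta_{m}}}.
\]

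Next I would reorganize this infimum over $\Lambda$ into one over $\M$. Fixing $m\in\M$ and restricting $\lambda$ to $\Lambda(m)$: for such $\lambda$ one has $s_{\lambda}\in\S_{m}$, so $m\in\M(s_{\lambda})$ and $\inf_{m'\in\M(s_{\lambda})}(4\eta_{m'}^{2}\vee\Delta_{m'})\le 4\eta_{m}^{2}\vee\Delta_{m}$; moreover $\inf_{\lambda\in\Lambda(m)}H^{2}(s,s_{\lambda})=H^{2}(s,\S_{m})$. Taking the infimum over $m$ and using $4\eta_{m}^{2}\vee\Delta_{m}\le 4(\eta_{m}^{2}\vee\Delta_{m})$ to absorb the factor $4$ into the constant yields~\eref{pasmal}.

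For the second assertion, I would first check that the net hypothesis feeds Assumption~\ref{h-discret}, so that~\eref{pasmal} applies. Indeed, taking $\eps=\eta_{m}\sqrt{\tau}$ and $D=D_{m}$ in~\eref{nbdept} and writing the radius as $r=\rho\sqrt{\tau}$ gives $\ab{\S_{m}\cap\B(s,\rho\sqrt{\tau})}\le M\exp\cro{D_{m}\rho^{2}/\eta_{m}^{2}}$ for $\rho\ge 2\eta_{m}$; the choice $\eta_{m}^{2}=2(D_{m}\vee 1/8)$ forces $D_{m}/\eta_{m}^{2}\le 1/2$ and $\eta_{m}\ge 1/2$, which is exactly Assumption~\ref{h-discret}. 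It then remains to convert $H^{2}(s,\S_{m})$ and $\eta_{m}^{2}$ into the quantities appearing in~\eref{b1}. Since $\S_{m}$ is in particular an $\eta_{m}\sqrt{\tau}$-net for $S_{m}$, the triangle inequality gives $H(s,\S_{m})\le H(s,S_{m})+\eta_{m}\sqrt{\tau}$, whence $H^{2}(s,\S_{m})\le 2H^{2}(s,S_{m})+2\tau\eta_{m}^{2}$. Using $\eta_{m}^{2}=2(D_{m}\vee 1/8)$ together with $\Delta_{m}\ge 1$ to bound both $\eta_{m}^{2}$ and $\eta_{m}^{2}\vee\Delta_{m}$ by a multiple of $D_{m}\vee\Delta_{m}$, and substituting into~\eref{pasmal}, I obtain~\eref{b1}.

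Since essentially all the probabilistic content is carried by Lemma~\ref{lem1} and Theorem~\ref{main}, there is no genuinely hard step here; the only point requiring care is the verification that the specific normalisation $\eta_{m}^{2}=2(D_{m}\vee 1/8)$ makes the $(M,\eta_{m}\sqrt{\tau},D_{m})$-net condition coincide with Assumption~\ref{h-discret}, together with the attendant bookkeeping of universal constants.
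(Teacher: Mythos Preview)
Your proof is correct and follows essentially the same route as the paper: invoke Lemma~\ref{lem1} to reduce to Assumption~\ref{hypo}, apply Corollary~\ref{cor1} (the paper uses its first display rather than~\eref{joli}, but this is immaterial), then reorganize the infimum over $\Lambda$ as one over $\M$, and for~\eref{b1} pass from $\S_{m}$ to $S_{m}$ via the $\eta_{m}\sqrt{\tau}$-net property together with $\eta_{m}^{2}=2(D_{m}\vee 1/8)$ and $\Delta_{m}\ge 1$. Your verification that the net condition implies Assumption~\ref{h-discret} is redundant, since the proposition already assumes Assumption~\ref{h-discret} in both parts, but it does no harm.
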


In density estimation, an inequality such as~\eref{b1} also holds for $T$-estimators as proven in Birg\'e~\citeyearpar{MR2219712} (see his Theorem~5). For suitable choices of collections $\ac{S_{m},\ m\in\M}$, an estimator $\tilde s$ satisfying~\eref{b1} possesses nice optimal properties (in the minimax sense) and outperforms in some situations the classical maximum likelihood estimator. For more details, we refer the reader to the paper of Birg\'e mentioned above. 

Assume now that for all (deterministic) $m\in\M$, one is able to build an estimator $\hat s_{m}$ (depending on $N$) with values in $S_{m}$ with a risk satisfying for some universal constant $C$,
\begin{equation}\label{eva}
\E\cro{H^{2}\pa{s,\hat s_{m}}}\le C\pa{H^{2}(s,S_{m})+D_{m}},\ \ \forall s\in\LL_{0}.
\end{equation}
By selecting among the family $\ac{\hat s_{m},\ m\in\M}$ with Procedure~\ref{P2}, one obtains an estimator $\tilde s'=\hat s_{\hat m}$ which also satisfies an inequality such as~\eref{b1} (this easily derives from~\eref{eqz}). Consequently, from a theoretical point of view both estimators $\tilde s$ and $\tilde s'$ possess similar properties. If the estimators $\hat s_{m}$ can be built in a simple way, the advantage of $\tilde s'$ compared to $\tilde s$ is rather practical since the former requires  the comparison pair by pair of the estimators $\hat s_{m}$ only although the latter requires that of all  the pairs of $s_{\lambda}$. This shows the use of the discretization device is actually useful only when no estimator $\hat s_{m}$ satisfying~\eref{eva} is available. This seems to be often the case when the models $S_{m}$ are not linear spaces or when the maximum likelihood estimator performs poorly.

Finally, we mention that a careful look at the proof of Theorem~5 in Birg\'e~\citeyearpar{MR2219712} shows that the selection rule described there could also be used to select among the estimators $\hat s_{m}$ in the sense that the resulting estimator would also satisfy an analogue of~\eref{b1}.

\section{Estimating the means of nonnegative random variables}\label{sect-GLM}
In this section, we consider the statistical setting described in Example~\ref{ex-means}.  Hereafter, we shall assume that $\sqrt{s}$ belongs to some closed convex subset $\overline \CC$ of $\R_{+}^{n}$. Since the distance $H$ between two elements $t,t'\in \R_{+}^{n}$ corresponds to the Euclidean distance between $\sqrt{t}$ and $\sqrt{t'}$, it seems natural to approximate the parameter $\sqrt{s}$ with respect to the Euclidean norm. To do so, we introduce a family of linear subspaces  $\ac{\overline V_{m},\ m\in\M}$ of $\R^{n}$ with respective dimensions denoted $\overline D_{m}$ that correspond to approximation spaces for $\sqrt{s}$. We associate to each of these the sets $V_{m}$ for $m\in\M$ which are either given by $V_{m}=\overline V_{m}\cap \overline \CC$ or  $V_{m}=\Pi_{\overline \CC}\overline V_{m}$. Finally, we consider the models $S_{m}$ defined for $m\in\M$ by
\[
S_{m}=\phi^{-1}\pa{V_{m}}=\ac{(u_{1}^{2},\ldots,u_{n}^{2}), \ u\in V_{m}}
\]
where $\phi(t)=\sqrt{t}$ for $t\in\R_{+}^{n}$. 

Two examples of collections $\ac{V_{m},\ m\in\M}$ are given below.

\begin{pb}[The regression problem]\label{pb10} 
Assume that $\sqrt{s}=\pa{F(x_{1}),\ldots,F(x_{n})}$ where the $x_{i}$ are deterministic points on $[0,1]$  and $F$ is a  function from $[0,1]$ into $\R_{+}$. Note that the problem we deal with can be written in a regression setting as follows 
\[
X_{i}=F^{2}(x_{i})+\varepsilon_{i},\ \ \ i=1,\ldots,n
\]
where the $\eps_{i}=X_{i}-F^{2}(x_{i})$ are independent and centered random variables. The problem is to estimate $s=(F^{2}(x_{1}),\ldots,F^{2}(x_{n}))$.
\end{pb}

In order to approximate $\sqrt{s}=(F(x_{1}),\ldots,F(x_{n}))$, it is natural to introduce linear spaces $\ac{\VV_{m},\ m\in\M}$ having good approximation properties with respect to usual classes of functions $F$ such as Besov spaces. For $\alpha>0$ and $p\in[1,+\infty]$, $B^{\alpha}_{p,\infty}(R)$ denotes the ball of radius $R>0$ of the Besov space $B^{\alpha}_{p,\infty}$. For a precise definition of these spaces, we refer to DeVore and Lorentz~\citeyearpar{DeVore}. The following result derives from Theorem~1 and Proposition~1 in Birg\'e \& Massart~\citeyearpar{MR1848840}.

\begin{prop}\label{app}
For all $r\in\N\setminus\ac{0}$ and $J\in\N$, there exists a family $\ac{\VV_{(\m,r)},\ \m\in\M_{r}(J)}$ of linear subspaces of $\IL^{2}([0,1],dx)$ and positive numbers $C(r),C'(r),C''(r)$ such that $D_{(\m,r)}=\dim(\VV_{(\m,r)})\le C(r)2^{J}$, $\log\ab{\M_{r}(J)}\le C'(r)2^{J}$ and for all $\alpha\in(1/p,r)$ and all $f\in\B^{\alpha}_{p,\infty}(R)$, 
\[
\inf\ac{\sup_{x\in [0,1]}\ab{f(x)-g(x)},\ g\in\bigcup_{(\m,r)\in\M_{r}(J)}\VV_{(\m,r)}}\le C''(r)R2^{-J\alpha}.
\]
\end{prop}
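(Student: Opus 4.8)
The plan is to realise each $\VV_{(\m,r)}$ as a space of piecewise polynomials on a dyadic partition. Fix $r\in\N\setminus\ac{0}$ and let $\m$ range over the \emph{dyadic} partitions of $[0,1]$, that is the partitions into intervals of the form $[k2^{-j},(k+1)2^{-j})$ obtained as the leaves of a finite binary subtree of the complete dyadic tree (each split halving an interval). I would take $\M_{r}(J)$ to be the set of such partitions with at most $C(r)2^{J}$ intervals and set $\VV_{(\m,r)}$ to be the space of functions that coincide with a polynomial of degree $<r$ on each $I\in\m$. Then $D_{(\m,r)}=\dim\VV_{(\m,r)}=r\ab{\m}\le r\,C(r)2^{J}$, giving the first bound after renaming the constant.

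Next I would bound the cardinality of $\M_{r}(J)$. A dyadic partition with $N$ intervals is in bijection with a full binary subtree having $N$ leaves, and the number of such trees is the corresponding Catalan number, which is at most $4^{N}$. Summing over $N\le C(r)2^{J}$ yields $\ab{\M_{r}(J)}\le C\,4^{C(r)2^{J}}$, hence $\log\ab{\M_{r}(J)}\le C'(r)2^{J}$. This step is pure combinatorial bookkeeping.

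The heart of the matter is the approximation bound, and here I would appeal directly to the nonlinear (adaptive) approximation theory for Besov balls established in Theorem~1 and Proposition~1 of Birg\'e \& Massart~\citeyearpar{MR1848840}. The restriction $\alpha>1/p$ guarantees the embedding $\B^{\alpha}_{p,\infty}\hookrightarrow L^{\infty}([0,1])$, so that $\sup_{x}\ab{f(x)-g(x)}$ is finite and meaningful, while $\alpha<r$ ensures that polynomials of degree $<r$ capture the relevant Taylor order, so that the local best-approximation error on an interval of length $h$ decays like $h^{\alpha}$. For $f\in\B^{\alpha}_{p,\infty}(R)$ the Besov seminorm controls an $\ell^{p}$-type sum over dyadic scales and locations of these local errors; a greedy tree-pruning (thresholding) selection then produces a dyadic partition $\m\in\M_{r}(J)$ with $\ab{\m}$ of order $2^{J}$ on which the piecewise polynomial best approximation $g$ obeys $\sup_{x\in[0,1]}\ab{f(x)-g(x)}\le C''(r)R2^{-J\alpha}$, with $C''(r)$ uniform in $\alpha$ and $p$ as in the cited result. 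Since $g\in\bigcup_{(\m,r)\in\M_{r}(J)}\VV_{(\m,r)}$, this is exactly the claimed estimate.

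The main obstacle is this last step. The target is a uniform ($L^{\infty}$) error, whereas the information supplied by the Besov ball is of $\ell^{p}$ type across scales; one must therefore distribute the order-$2^{J}$ intervals \emph{adaptively}, refining more where $f$ oscillates, and balance the resulting per-interval errors so as to reach the rate $2^{-J\alpha}$ rather than the slower rate that a uniform partition of comparable size would give. This balancing is precisely the DeVore-type nonlinear approximation argument carried out in the cited references, so I would quote it rather than reproduce it, and would confine my own work to verifying that their approximating partitions are dyadic with order-$2^{J}$ pieces and that the smoothness and degree ranges match, namely $\alpha<r$ and $\alpha>1/p$.
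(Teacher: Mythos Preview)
Your proposal is correct and matches the paper's treatment: the paper does not give an independent proof of this proposition but simply states that it derives from Theorem~1 and Proposition~1 in Birg\'e and Massart~\citeyearpar{MR1848840}, which is exactly the reference you invoke for the approximation step. Your additional sketch of the construction (dyadic partitions, piecewise polynomials of degree $<r$, Catalan-number count) is a faithful unpacking of what lies behind that citation, so there is nothing to add.
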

Thus, for handling Problem~\ref{pb10} we shall consider $\M=\bigcup_{r\ge 1}\bigcup_{J\ge 0}\M_{r}(J)$, and for all $m=(\m,r)\in\M$, take 
\[
\overline V_{m}=\ac{(g(x_{1}),\ldots,g(x_{n})),\ g\in\VV_{m}},\ V_{m}=\Pi_{\overline C}\overline V_{m}\ \ \text{and}\ \ \ S_{m}=\phi^{-1}(V_{m}).
\]
Besides, by taking for $m=(\m,r)\in\M_{r}(J)$, $\Delta_{m}=(C'(r)+1)2^{J}+r$ note that so that~\eref{sigma} holds since 
\[
\sum_{m\in\M}e^{-\Delta_{m}}\le \sum_{r\ge 1}\sum_{J\ge 0}\ab{\M_{r}(J)}e^{-(C'(r)+1)2^{J}-r}\le \sum_{r\ge 1}e^{-r}\sum_{J\ge 0}e^{-C'(r)2^{J}}<+\infty.
\]

Let us now turn to another problem.
\begin{pb}[The variable selection problem]\label{pb11} 
We assume that $\sqrt{s}$ is of the form
\[
\sqrt{s}=\sum_{j=1}^{p}\beta_{j}v^{(j)} 
\] 
where $\beta=(\beta_{1},\ldots,\beta_{p})$ is an unknown vector of $\R^{p}$ and  $v^{(1)},\ldots,v^{(p)}$ are $p\ge 2$ known vectors in $\R^{n}$. This means that the (squared) mean of each $X_{i}$ is a linear combination of the values $v_{i}^{(j)}$ of the predictor $v^{(j)}$ for $j=1,\ldots,p$ at experiment $i$. Since, the number of predictors $p$ may be large and possibly larger than the number $n$ of data, we shall assume that the vector $\beta$ is sparse which means that 
\[
\ab{\ac{j,\ \beta_{j}\neq 0}}\le \overline D_{\max}
\] 
for a known integer $\overline D_{\max}\le n$. Our aim is to estimate $\sqrt{s}$ and the set $\ac{j,\ \beta_{j}\neq 0}$.
\end{pb}

For this problem, we consider any class $\M$ of subsets  $m$ of $\ac{1,\ldots,p}$ with cardinality not larger than $\overline D_{\max}$, and define for $m\in\M$, $V_{m}=\overline V_{m}\cap\overline \CC$ where $\overline V_{m}$ is the linear span of the $v^{(j)}$ for $j\in m$ (with the convention $\overline V_{\varnothing}=\ac{0}$).

\subsection{Assumption on the $X_{i}$}
We assume  the following
\begin{ass}\label{H5}
The random variables $X_{i}$ are independent nonnegative random variable with respective means $s_{i}$  satisfying for some nonnegative numbers $\sigma$ and $\beta$ 
\begin{equation}\label{laplace}
\max_{i=1,\ldots,n}\E\cro{e^{u\pa{X_{i}-s_{i}}}}\le \exp\cro{{u^{2}\sigma s_{i}
\over 2(1-|u|\beta)}}\ \ \forall u\in (-1/\beta,1/\beta).
\end{equation}
\end{ass}

This assumption holds for a large class of distributions including, any random variables with values in $[0,\beta]$ (then $\sigma=\beta$), the Binomial distribution (then $\sigma=1=\beta$), the Poisson distribution (for the same choice of parameters), or the Gamma distribution $\gamma(p,q)$ (with mean $p/q$ and $\beta=1/q=\sigma$). By expanding~\eref{laplace} in a vicinity of 0, it is easy to see that Assumption~\ref{H5} implies that $\Var(X_{i})\le \sigma \E(X_{i})$ for all $i=1,\ldots,n$. 

In the remaining part of this section, under Assumption~\ref{H5}, we shall set 
\begin{equation}\label{def-taub}
\tau= {96(\sigma+\beta)\over z^{2}}.
\end{equation}

\subsection{Discretizing the $S_{m}$}\label{sect:discretisation}
To each $m\in\M$ such that $S_{m}\neq\ac{0}$, we apply  with $V=V_{m}$, $\overline V=\overline V_{m}$ and $S=S_{m}$ one of the two discretization procedures described below (accordingly to the form of $V_{m}$). These procedures lead to a discretized subset $\S_{m}$ of $S_{m}$ associated to a parameter $\eta=\eta_{m}$ depending on the dimension of $\overline V_{m}$. 

The first procedure below is abstract and is based on a discretization argument introduced in Birg\'e~\citeyearpar{MR2219712}. The resulting set $\S$, though difficult to build in practice, possesses nice properties with respect to the original set $S$. We shall not detail the construction of $\S$ here and rather refer the reader to the proof in Section~\ref{sect-Pdisc}. We only present its properties. We shall use them in order to obtain new results on the estimation of the parameter $s$. 

\paragraph{Discretization $P1$} We assume here that $S=\phi^{-1}(V)$ where $V$ is of the form $\Pi_{\overline \CC}\overline V$ for some  linear subspace $\overline V$ of $\R^{n}$ with dimension $\overline D\ge 1$. We associate to $S$ the parameter 
\begin{equation}\label{eta1}
\eta^{2}= 2\times 4.2\overline D
\end{equation}
together with a discretized subset $\S$ with the following properties.
\begin{prop}\label{descr1}
There exists a discretized subset $\S$ of $S$ which satisfies Assumption~\ref{h-discret} with $M=1$ and $\tau$ and $\eta$ given by~\eref{def-taub} and~\eref{eta1} respectively. Moreover, $H(t,\S)\le 4H(t,S)$ for all $t\in\overline \CC$.
\end{prop}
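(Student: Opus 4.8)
The plan is to pass to the square-root picture and reduce everything to a Euclidean covering and approximation problem in $\R^{n}$. Since $H(t,t')=\frac{1}{\sqrt{2}}\norm{\sqrt{t}-\sqrt{t'}}$ and $S=\phi^{-1}(V)$ with $V=\Pi_{\overline \CC}\overline V$, any $\S=\phi^{-1}(T)$ with $T\subset V$ satisfies $H(t,\S)=\frac{1}{\sqrt{2}}\inf_{u\in T}\norm{\sqrt{t}-u}$ and $H(t,S)=\frac{1}{\sqrt{2}}\inf_{u\in V}\norm{\sqrt{t}-u}$. Writing a Euclidean radius as $\rho=r\sqrt{2\tau}$ (so that $r^{2}/2=\rho^{2}/(4\tau)$ and $r\ge 2\eta\iff\rho\ge 2\eta\sqrt{2\tau}$), the counting part of Assumption~\ref{h-discret} with $M=1$ becomes
\[
\ab{\ac{u\in T\telque\norm{u-w}\le\rho}}\le\exp\pa{\frac{\rho^{2}}{4\tau}}\qquad\text{for all }w\in\R^{n},\ \rho\ge 2\eta\sqrt{2\tau},
\]
while the final assertion becomes the relative-net bound $\inf_{u\in T}\norm{\sqrt{t}-u}\le 4\inf_{u\in V}\norm{\sqrt{t}-u}$ for all $t\in\overline \CC$. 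Recalling the computation preceding Corollary~\ref{cor2ter}, the displayed bound says precisely that $\S$ is a $(1,\eta\sqrt{\tau},4.2\,\overline D)$-net of $S$ with $\eta^{2}=2\times 4.2\,\overline D$ (and $4.2\,\overline D\ge 1/8$): here $4.2\,\overline D$ is the upper bound claimed for the metric dimension of $S$ (equivalently of $V$) and $\eps=\eta\sqrt{\tau}\asymp\sqrt{\tau\overline D}$ is the resolution of the net.

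I would then build $T$ from a single-scale discretization of $V$ at resolution $\eps\asymp\sqrt{\tau\overline D}$: take $T$ to be a maximal $\eps$-separated subset of $V$, which is automatically an $\eps$-net of $V$, obtained by projecting through the $1$-Lipschitz map $\Pi_{\overline \CC}$ a suitable $\eps$-lattice of the linear space $\overline V$. The mesh $\eps$ is the single free parameter, to be tuned against two competing demands. For the counting bound I would use that $\Pi_{\overline \CC}$ is $1$-Lipschitz and $\overline V$ is $\overline D$-dimensional, so that $V$ inherits the metric dimension of an $\overline D$-dimensional set and a volumetric packing argument bounds the number of $\eps$-separated points of $V$ in any Euclidean ball of radius $\rho$ by $\pa{1+2\rho/\eps}^{\overline D}$; this is exactly Birg\'e's estimate of the metric dimension of $\Pi_{\overline \CC}\overline V$ that I would invoke (Birg\'e~\citeyearpar{MR2219712}). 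It then remains to convert this polynomial bound into the Gaussian bound $\exp(\rho^{2}/(4\tau))$ uniformly over $\rho\ge 2\eta\sqrt{2\tau}$: with $x=\rho/\sqrt{\tau}$ and $\eps=c\sqrt{\tau}$ this is the elementary inequality $\overline D\log(1+2x/c)\le x^{2}/4$ for $x\ge 2\sqrt{2}\,\eta$, whose worst case sits at the threshold $x_{0}=2\sqrt{2}\,\eta$, where the right-hand side equals $2\eta^{2}=16.8\,\overline D$. Choosing the mesh $c$ so that the quadratic dominates the logarithm for all $x\ge x_{0}$ while keeping the leading constant equal to $M=1$ is what pins down the value $4.2$.

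Finally, for the relative-net bound I would exploit that $\Pi_{\overline \CC}$ is the metric projection onto the closed convex set $\overline \CC$, hence $1$-Lipschitz: for $t\in\overline \CC$, choosing $v\in V$ that (nearly) realizes $\inf_{u\in V}\norm{\sqrt{t}-u}$ and an $\eps$-net point $t_{0}\in T$ with $\norm{v-t_{0}}\le\eps$, the triangle inequality gives $\inf_{u\in T}\norm{\sqrt{t}-u}\le\inf_{u\in V}\norm{\sqrt{t}-u}+\eps$. Upgrading this additive estimate to the multiplicative factor $4$ requires refining the net at scales below $\eps$ near $V$, and this is where I expect the main difficulty to lie: the counting bound forces $\eps$ to be of order $\sqrt{\tau\overline D}$ (large enough to keep cardinalities under $\exp(\rho^{2}/(4\tau))$ with $M=1$), whereas the relative approximation wants $\eps$ small as $\sqrt{t}$ approaches $V$. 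Reconciling these two requirements — producing a single discretized set that simultaneously satisfies Assumption~\ref{h-discret} with $\eta^{2}=8.4\,\overline D$ and approximates every $\sqrt{t}$, $t\in\overline \CC$, within the factor $4$ — is the crux, and is precisely the role of Birg\'e's discretization device, whose details I would carry out in Section~\ref{sect-Pdisc}.
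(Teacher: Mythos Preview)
Your proposal is correct and follows essentially the same route as the paper: reduce via $\phi$ to a Euclidean discretization problem on $V=\Pi_{\overline\CC}\overline V$, start from the lattice $\T$ of~\eref{def-tau} in $\overline V$, and invoke Birg\'e's Propositions~9 and~12 in~\citeyearpar{MR2219712} (with $\overline\pi=\Pi_{\overline\CC}$, $\M_{0}=\overline\CC$, $T=\T$, $\lambda=\eps=1$) to produce the thinned set $\T'\subset\Pi_{\overline\CC}\T$ with the required counting bound and relative approximation; then $\S=\phi^{-1}(\T')$.

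One small correction to your narrative: the multiplicative factor~$4$ in $H(t,\S)\le 4H(t,S)$ does \emph{not} arise from a multi-scale refinement of the net ``at scales below $\eps$ near $V$'' as you conjecture. Birg\'e's Proposition~12 is a single-scale construction: the projected lattice $\Pi_{\overline\CC}\T$ is generally not $\eps$-separated (projection onto a convex set can collapse distances), and the device extracts from it a suitable subset $\T'$ for which the contractive geometry of $\Pi_{\overline\CC}$ on $\overline\CC$ yields the relative bound directly. Your description of $T$ as simultaneously ``a maximal $\eps$-separated subset of $V$'' and ``obtained by projecting\ldots a suitable $\eps$-lattice'' is therefore slightly inconsistent, but since you explicitly hand off the details to Birg\'e's device this does not affect the validity of your plan.
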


The procedure below is much simpler than the one above but unfortunately not as powerful. Yet, it turns to be enough to handle Problem~\ref{pb11}. 

\paragraph*{Discretization P2} We assume here that $S=\phi^{-1}(V)$ with $V$ is of the form $\overline V\cap\overline \CC$ where $\overline V$ is a linear subspace of $\R^{n}$ with dimension $\overline D\ge 1$. Let $\Pi_{V}$ be the projector onto the closed convex set $V$ and $\T$ the subset of $\overline V$ given by
\begin{equation}\label{def-tau}
\T=\ac{\frac{2\eta\sqrt{\tau}}{\sqrt{\overline D}}\sum_{j=1}^{\overline D}k_{j}u_{j},\; (k_{j})_{j=1,\ldots,\overline D}\in\Z^{\overline D}}
\end{equation}
where $\ac{u_{1},\ldots,u_{\overline D}}$ is an orthonormal basis of $\overline V$ and 
\begin{equation}\label{eta2}
\eta^{2}= 2\times 1.031\overline D.
\end{equation}
Keep only the elements of $\T$ which are at distance not larger than $\eta\sqrt{\tau}$ of $V$, that is, those of 
\[
\T(\eta)=\ac{t\in\T,\ \inf_{v\in V}\norm{t-v}\le \eta\sqrt{\tau}}
\]
and define finally
\[
\S=\phi^{-1}(\T')\ \ {\rm \it where }\ \ \T'=\ac{\Pi_{V}t,\ t\in \T(\eta)}.
\]

The subset $\S\subset S$ satisfies the following.

\begin{prop}\label{build}
The subset $\S$ is an $(1,\eta\sqrt{\tau},1.031\overline D_{m})$-net of $S$ with $\tau$ and $\eta$ given by~\eref{def-taub} and~\eref{eta2} respectively.
\end{prop}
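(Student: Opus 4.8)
The plan is to verify directly the two requirements in the definition of an $(M,\eps,D)$-net for $S$, with $M=1$, $\eps=\eta\sqrt{\tau}$ and $D=1.031\,\overline D$, working throughout in the Euclidean space $\overline V\cong\R^{\overline D}$ through the identification $\phi(t)=\sqrt t$, under which $H(t,t')=\norm{\phi(t)-\phi(t')}$ for $t,t'\in\R_+^n$. Since $\T'=\ac{\Pi_{V}\theta,\ \theta\in\T(\eta)}\subset V$ and $\S=\phi^{-1}(\T')$, the inclusion $\S\subset\phi^{-1}(V)=S$ is immediate. It remains to prove the covering property and the counting bound~\eref{nbdept}.

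For the covering property, expressing the lattice $\T$ in the orthonormal basis $\ac{u_1,\ldots,u_{\overline D}}$ identifies it with $(2\eps/\sqrt{\overline D})\Z^{\overline D}$, whose covering radius in $\overline V$ is $\frac12\cdot(2\eps/\sqrt{\overline D})\cdot\sqrt{\overline D}=\eps$. Hence, given $t\in S$ with $v:=\phi(t)\in V$, there is $\theta\in\T$ with $\norm{v-\theta}\le\eps$; since $v\in V$ this shows $\inf_{w\in V}\norm{\theta-w}\le\eps$, i.e. $\theta\in\T(\eta)$, so $\Pi_V\theta\in\T'$. Using that $\Pi_V$ is $1$-Lipschitz and fixes $V$, $\norm{v-\Pi_V\theta}=\norm{\Pi_V v-\Pi_V\theta}\le\norm{v-\theta}\le\eps$, whence $H(t,\phi^{-1}(\Pi_V\theta))\le\eps$. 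Thus $\S$ is an $\eps$-net for $S$.

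The main work is the counting bound. Fix $s\in\LL_0$, put $\sigma=\phi(s)$ and $r\ge2\eps$, and consider $W=\ac{w\in\T',\ \norm{\sigma-w}\le r}$, so that the left-hand side of~\eref{nbdept} equals $|W|$. I would reduce $|W|$ to a lattice point count in two steps. First, every $w\in W$ lies in $\overline V$, so by Pythagoras $\norm{\Pi_{\overline V}\sigma-w}\le\norm{\sigma-w}\le r$. Second, for each $w\in W$ choose one $\theta_w\in\T(\eta)$ with $\Pi_V\theta_w=w$; the map $w\mapsto\theta_w$ is injective, and because $\theta_w\in\T(\eta)$ and $\Pi_V\theta_w$ is the nearest point of $V$ to $\theta_w$ one has $\norm{w-\theta_w}=\inf_{v\in V}\norm{\theta_w-v}\le\eps$. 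Combining these, $\norm{\Pi_{\overline V}\sigma-\theta_w}\le r+\eps$, so $|W|$ is bounded by the number of points of $\T$ lying in the ball of $\overline V$ of radius $r+\eps$ centred at $\Pi_{\overline V}\sigma$.

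Finally I would count these lattice points by a volume argument: the disjoint cubic cells of side $a=2\eps/\sqrt{\overline D}$ attached to the points of $\T$ each have circumradius $a\sqrt{\overline D}/2=\eps$, so all cells of points within distance $r+\eps$ fit inside the ball of radius $r+2\eps$, giving at most $\omega_{\overline D}(r+2\eps)^{\overline D}/a^{\overline D}$ such points, where $\omega_{\overline D}$ is the volume of the unit ball of $\R^{\overline D}$. Using $\omega_{\overline D}\,\overline D^{\overline D/2}\le(2\pi e)^{\overline D/2}$ this yields
\[
|W|\le(2\pi e)^{\overline D/2}\pa{\frac{r}{2\eps}+1}^{\overline D},
\]
and taking logarithms reduces the claim $|W|\le\exp\cro{1.031\,\overline D\,(r/\eps)^2}$ to the scalar inequality $\frac12\log(2\pi e)+\log\pa{\frac{x}{2}+1}\le1.031\,x^2$ for $x=r/\eps\ge2$, whose left-hand side grows logarithmically while the right-hand side is quadratic; it is checked at the binding value $x=2$ and holds a fortiori for $x\ge2$, establishing~\eref{nbdept} with $M=1$ and $D=1.031\,\overline D$. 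I expect the delicate point to be this counting step, and within it the passage from the projected points $w=\Pi_V\theta\in\T'$ back to genuine lattice points $\theta_w\in\T$: one must combine the defining inequality of $\T(\eta)$ with the fact that $\Pi_V$ realises the distance to $V$ to control $\norm{w-\theta_w}$, and then use non-expansiveness and Pythagoras to keep all relevant points centred at $\Pi_{\overline V}\sigma$ inside $\overline V$. The remaining volume estimate and the scalar inequality fixing the constant $1.031$ are routine.
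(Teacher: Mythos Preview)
Your proof is correct and follows essentially the same route as the paper: the covering argument and the reduction from $\T'$ back to the full lattice $\T$ via the triangle inequality $d(s,\theta_w)\le r+\eps$ are exactly what the paper does. The only difference is in the final step: the paper cites Proposition~9 of Birg\'e~\citeyearpar{MR2219712} to obtain directly that $\T$ satisfies $\ab{\ac{t\in\T,\ d(s,t)\le r}}\le\exp\!\bigl[0.458\,\overline D\,(r/\eps)^2\bigr]$ for $r\ge 2\eps$, and then uses $r+\eps\le 1.5r$ to inflate the constant to $0.458\times(1.5)^2\approx1.031$, whereas you carry out the lattice point count yourself by a volume-packing argument (cells of circumradius $\eps$ inside a ball of radius $r+2\eps$) and then verify the resulting scalar inequality at $x=2$. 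Your version is self-contained and avoids the external reference; the paper's is shorter because the work is already packaged in Birg\'e's proposition. Both reductions and both constants are sound.
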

The proof is delayed to Section~\ref{p-prop5}.

\subsection{The results}
We have at disposal the family of discretized subsets $\S_{m}$ of $S_{m}$ which have been built in the previous section. We recall that each of these $\S_{m}$ are associated to a parameter $\eta_{m}>0$.  We consider here the discretization points $\ac{s_{\lambda},\ \lambda\in\Lambda(m)}=\S_{m}$ for $m\in\M$  and the family of estimators $\ac{ s_{\lambda},\ \lambda\in\Lambda=\bigcup_{m\in\M}\Lambda(m)}$ obtained by gathering those.  For such a family, the following holds:

\begin{thm}\label{varselec}
Assume that Assumption~\ref{H5} holds and let $\left\{ \Delta_{m},\: m\in\M\right\} $ be a family of weights satisfying~\eref{sigma}. Choose $\tilde{s}=s_{\hat{\lambda}}$ among the family $\ac{s_{\lambda},\ \lambda\in\Lambda}$
according to Procedure~\ref{P1} with $\pen$ satisfying
\begin{equation}\label{pen10}
\pen(t)=   z\tau\inf_{m\in\M(t)}\pa{4\eta_{m}^{2}+\Delta_{m}}\ \ \forall t\in\S.
\end{equation}
Then, 
\[
C\E\cro{H^{2}\pa{s,\tilde{s}}}\le \inf_{m\in\M}\ac{H^{2}(s,S_{m})+\tau\pa{\overline D_{m}\vee \Delta_{m}}}
\]
where $\tau$ is defined by~\eref{def-taub} and $C$ depends on $z$ and $\Sigma$ only. 
\end{thm}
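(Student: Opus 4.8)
The plan is to derive Theorem~\ref{varselec} as a direct consequence of Proposition~\ref{Lucien}, whose inequality~\eref{b1} already has exactly the target form provided the discretized subsets $\S_{m}$ are genuine $(M,\eta_{m}\sqrt{\tau},D_{m})$-nets with the right parameters. So the real content of the proof is to verify that the hypotheses of Proposition~\ref{Lucien} are met for the present collection $\ac{\S_{m},\ m\in\M}$. First I would check Assumption~\ref{bern}: here $N$ is the measure from Example~\ref{ex-means}, so under Assumption~\ref{H5} I would show that the exponential bound on $Z(N,t,t')$ holds with explicit constants $a,b,c$. This is the key analytic step — one controls the centered process $\int_{\X}\psi(t,t',x)\,dN-\int_{\X}\psi(t,t',x)s\,d\mu$ via a Bernstein-type inequality, using that $\psi$ takes values in $[-1/\sqrt{2},1/\sqrt{2}]$ together with the Laplace-transform bound~\eref{laplace}. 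The constants should be arranged so that $\tau=4(2+cz)/(az^{2})$ coincides with the definition~\eref{def-taub}, namely $\tau=96(\sigma+\beta)/z^{2}$; I expect matching these numerical constants to be the fiddly part.

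Next I would handle the net property. For each $m\in\M$ the discretized set $\S_{m}$ has been produced by Discretization $P1$ or $P2$ depending on whether $V_{m}=\Pi_{\overline\CC}\overline V_{m}$ or $V_{m}=\overline V_{m}\cap\overline\CC$. In the first case Proposition~\ref{descr1} gives that $\S_{m}$ satisfies Assumption~\ref{h-discret} with $M=1$ and $\eta_{m}^{2}=2\times4.2\,\overline D_{m}$; in the second case Proposition~\ref{build} gives that $\S_{m}$ is an $(1,\eta_{m}\sqrt{\tau},1.031\overline D_{m})$-net with $\eta_{m}^{2}=2\times1.031\,\overline D_{m}$. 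In either situation one has $\eta_{m}^{2}$ proportional to $\overline D_{m}$, and the metric-dimension parameter $D_{m}$ (either $4.2\overline D_{m}$ or $1.031\overline D_{m}$) satisfies $\eta_{m}^{2}=2(D_{m}\vee 1/8)$, which is precisely the relation required in the second half of Proposition~\ref{Lucien}. Thus both hypotheses needed for~\eref{b1} hold with $D_{m}$ comparable to $\overline D_{m}$.

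Having verified the hypotheses, I would invoke inequality~\eref{b1} of Proposition~\ref{Lucien} directly. It yields
\[
\E\cro{H^{2}\pa{s,\tilde s}}\le C'\inf_{m\in\M}\cro{H^{2}\pa{s,S_{m}}+\tau\pa{D_{m}\vee\Delta_{m}}},
\]
with $C'$ depending on $z,b,M,\Sigma$ only; since $b=M=1$ here, $C'$ depends on $z$ and $\Sigma$ only. Finally, because $D_{m}$ is a fixed multiple of $\overline D_{m}$ (with constant $4.2$ or $1.031$, in both cases absorbed into $C$), one may replace $D_{m}\vee\Delta_{m}$ by $\overline D_{m}\vee\Delta_{m}$ up to adjusting the constant, giving the stated bound $C\E\cro{H^{2}(s,\tilde s)}\le\inf_{m\in\M}\ac{H^{2}(s,S_{m})+\tau(\overline D_{m}\vee\Delta_{m})}$. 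The only genuine obstacle is the derivation of Assumption~\ref{bern} with constants compatible with~\eref{def-taub}; everything after that is bookkeeping through the already-established propositions.
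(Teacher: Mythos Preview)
Your overall plan coincides with the paper's: verify Assumption~\ref{bern} from Assumption~\ref{H5} via a Bernstein argument on $Z(N,t,t')$, check the discretization properties of the $\S_m$ through Propositions~\ref{descr1} and~\ref{build}, and then invoke Proposition~\ref{Lucien}. The Bernstein step and the bookkeeping are handled essentially as you describe.

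There is, however, a small gap in the way you finish. You want to apply inequality~\eref{b1} directly, which requires each $\S_m$ to be an $(M,\eta_m\sqrt{\tau},D_m)$-net for $S_m$ in the sense of the definition, i.e.\ an $\eta_m\sqrt{\tau}$-net together with the counting bound. This is fine for Discretization~$P2$, where Proposition~\ref{build} says exactly that. But for Discretization~$P1$, Proposition~\ref{descr1} does \emph{not} claim that $\S_m$ is an $\eta_m\sqrt{\tau}$-net for $S_m$: it only asserts Assumption~\ref{h-discret} (the counting bound) and the different approximation property $H(t,\S_m)\le 4H(t,S_m)$. These are not the same; in particular the latter does not give a uniform $\eta_m\sqrt{\tau}$-covering of $S_m$. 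So~\eref{b1} is not available in the $P1$ case, and your claim that ``both hypotheses needed for~\eref{b1} hold'' is not justified there.

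The paper circumvents this by using~\eref{pasmal} instead of~\eref{b1}. Inequality~\eref{pasmal} only needs Assumptions~\ref{bern} and~\ref{h-discret}, both of which hold for $P1$ and $P2$, and gives a bound in terms of $H^2(s,\S_m)$. One then passes from $\S_m$ to $S_m$ via the common inequality
\[
H^{2}(s,\S_{m})\le 16H^{2}(s,S_{m})+2\tau \eta_{m}^{2},
\]
which follows from the $\eta_m\sqrt{\tau}$-net property in the $P2$ case and from $H(t,\S_m)\le 4H(t,S_m)$ in the $P1$ case. After that, since $\eta_m^2$ is a fixed multiple of $\overline D_m$, the conclusion follows exactly as you say. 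So the fix is simply to route through~\eref{pasmal} rather than~\eref{b1}.
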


We deduce from Theorem~\ref{varselec} the following risk bounds:

\begin{cor}\label{minimax}
Assume Assumption~\ref{H5} holds. Then, 
\begin{itemize}
\item[$(i)$] for any $m\in\M$, there exists an estimator $\tilde s_{m}$  satisfying
\begin{equation}\label{minimax1}
\sup_{s\in S_{m}}\E\cro{H^{2}\pa{s,\tilde s_{m}}}\le C\pa{\overline D_{m}\vee 1},
\end{equation}
where $C$ depends on $z,\sigma$ and $\beta$ only;
\item[$(ii)$] for Problem~\ref{pb10}, there exists an estimator $\tilde F$ such that for all $p\in[1,+\infty]$, $\alpha>{1/p}$ and $R\ge 1/n$
\[
\sup_{F\in B^{\alpha}_{p,\infty}(R)}\E\cro{{1\over n}\sum_{i=1}^{n}\pa{F(x_{i})-\tilde F(x_{i})}^{2}}\le C R^{2/(1+2\alpha)}n^{-2\alpha/(1+2\alpha)},
\]
where $C$ depends on $R,\alpha,p,\sigma,z$ and $\beta$;
\item[$(iii)$] for Problem~\ref{pb11}, by applying Procedure~\ref{P1} with weights $\Delta_{m}$ satisfying~\eref{sigma}, one selects a family of predictors $\ac{v^{(j)},\ j\in\hat m}$ and builds an estimator $\tilde s\in S_{\hat m}$ such that, 
\[
\E\cro{H^{2}\pa{s,\tilde s}}\le C\inf_{m\in\M}\cro{H^{2}(s,S_{m})+|m|\vee \Delta_{m}},
\]
where $C$ depends on $z,\sigma,\beta$ only.
\end{itemize}
\end{cor}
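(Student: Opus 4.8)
The plan is to derive all three assertions from the oracle inequality of Theorem~\ref{varselec}, which under Assumption~\ref{H5} furnishes an estimator $\tilde s$ satisfying
\[
C\E\cro{H^{2}\pa{s,\tilde s}}\le \inf_{m\in\M}\ac{H^{2}(s,S_{m})+\tau\pa{\overline D_{m}\vee \Delta_{m}}},\qquad \tau=\frac{96(\sigma+\beta)}{z^{2}}.
\]
Since $\tau$ depends only on $\sigma,\beta,z$, it will be absorbed into the final constants throughout. Each part then amounts to choosing an appropriate collection $\ac{S_{m},\ m\in\M}$ and weights $\ac{\Delta_{m}}$ and, for part $(ii)$, controlling the approximation term $H^{2}(s,S_{m})$.

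For part $(i)$ I would run the procedure on the single model $\M=\ac{m}$ with weight $\Delta_{m}=1$, so that~\eref{sigma} holds with $\Sigma=e^{-1}$ and the constant of Theorem~\ref{varselec} becomes numerical (depending on $z$ only). As $s\in S_{m}$ forces $H^{2}(s,S_{m})=0$, the inequality collapses to $C\E\cro{H^{2}(s,\tilde s_{m})}\le\tau(\overline D_{m}\vee 1)$, and taking the supremum over $s\in S_{m}$ gives~\eref{minimax1} with a constant depending on $z,\sigma,\beta$ only. For part $(iii)$, the models of Problem~\ref{pb11} satisfy $\overline D_{m}=\dim\overline V_{m}\le |m|$, hence $\overline D_{m}\vee\Delta_{m}\le |m|\vee\Delta_{m}$; inserting this into the displayed inequality and absorbing $\tau$ into the constant yields the claim, the index $\hat m$ selected by the procedure being the retained predictor set $\ac{v^{(j)},\ j\in\hat m}$.

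The substance lies in part $(ii)$. Here I would take the collection $\ac{\VV_{(\m,r)}}$ of Proposition~\ref{app}, with $\M=\bigcup_{r\ge1}\bigcup_{J\ge0}\M_{r}(J)$, $\overline V_{m}=\ac{(g(x_{1}),\ldots,g(x_{n})),\ g\in\VV_{m}}$, $V_{m}=\Pi_{\overline\CC}\overline V_{m}$, and the weights $\Delta_{m}=(C'(r)+1)2^{J}+r$, which make~\eref{sigma} hold. First I would bound the bias: since $\sqrt s\in\overline\CC$ and the metric projection $\Pi_{\overline\CC}$ is $1$-Lipschitz and fixes $\sqrt s$, one has $H^{2}(s,S_{m})=\frac12\inf_{v\in V_{m}}\norm{\sqrt s-v}^{2}\le\frac12\,\mathrm{dist}(\sqrt s,\overline V_{m})^{2}=\frac12\inf_{g\in\VV_{m}}\sum_{i=1}^{n}\pa{F(x_{i})-g(x_{i})}^{2}$, and then use $\sum_{i}\le n\sup_{x}$ together with the $L^{\infty}$ estimate of Proposition~\ref{app} to get $H^{2}(s,S_{m})\le \frac{n}{2}C''(r)^{2}R^{2}2^{-2J\alpha}$ for a suitable $\m\in\M_{r}(J)$. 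Since $\overline D_{m}\le C(r)2^{J}$ and $\Delta_{m}\asymp 2^{J}$, fixing any integer $r>\alpha$ the oracle inequality becomes
\[
C\E\cro{H^{2}(s,\tilde s)}\le C_{r}\pa{nR^{2}2^{-2J\alpha}+2^{J}}\qquad\text{for every }J\ge0 .
\]
Optimizing the right-hand side over $J$ (the choice $2^{J}\asymp(nR^{2})^{1/(1+2\alpha)}$, with the boundary choice $J=0$ when $nR^{2}$ is of order $1$, which is where $R\ge 1/n$ enters) gives $C\E\cro{H^{2}(s,\tilde s)}\le C_{r}(nR^{2})^{1/(1+2\alpha)}$. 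Writing $\tilde F=\sqrt{\tilde s}$ and using $\E\cro{\frac1n\sum_{i}\pa{F(x_{i})-\tilde F(x_{i})}^{2}}=\frac2n\E\cro{H^{2}(s,\tilde s)}$ then produces the announced rate $R^{2/(1+2\alpha)}n^{-2\alpha/(1+2\alpha)}$.

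The step I expect to be delicate is precisely this last bias/variance trade-off in part $(ii)$: transferring the uniform ($L^{\infty}$) approximation bound of Proposition~\ref{app} into a discrete $\ell^{2}$ bound while correctly accounting for the projection $\Pi_{\overline\CC}$, and then balancing the two terms over the \emph{integer} scale $J$, including the small-$R$ regime where the constraint $R\ge1/n$ is what guarantees that the optimal resolution level remains admissible. Parts $(i)$ and $(iii)$, by contrast, are essentially immediate specializations of Theorem~\ref{varselec}.
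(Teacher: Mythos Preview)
Your proposal is correct and follows essentially the same route as the paper: all three parts are specializations of Theorem~\ref{varselec}, with $(i)$ obtained from the singleton collection $\M=\{m\}$, $(iii)$ read off directly via $\overline D_m\le |m|$, and $(ii)$ obtained by combining the $L^\infty$ approximation bound of Proposition~\ref{app} with the oracle inequality and optimizing over the dyadic scale $J$. Your treatment is in fact more careful than the paper's (which is very terse) in making explicit the $1$-Lipschitz property of $\Pi_{\overline\CC}$ for the bias bound and in identifying $\tilde F$ with $\sqrt{\tilde s}$ so that $\frac1n\sum_i(F(x_i)-\tilde F(x_i))^2=\frac{2}{n}H^2(s,\tilde s)$.
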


To our knowledge, Example~\ref{ex-means} has received little attention in the literature, especially from a non-asymptotic point of view. The only exceptions we are aware of are Antoniadis, Besbeas and Sapatinas~\citeyearpar{MR1897045} (see also Antoniadis and Sapatinas ~\citeyearpar{MR1859411}) and Kolaczyk and Nowak~\citeyearpar{MR2060167}. These papers consider the case where $s$ is of the form $(F(x_{1}),\ldots,F(x_{n}))$ for some function $F$ on $[0,1]$. In Antoniadis, Besbeas and Sapatinas~\citeyearpar{MR1897045}, the authors estimate $F$ by a wavelet shrinkage procedure and show that the resulting estimator achieves the  usual estimation rate of convergence over Sobolev classes with smoothness indexes larger than $1/2$.  Kolaczyk and Nowak~\citeyearpar{MR2060167} study the risk properties of some thresholding and partitioning estimators. There approach requires that the $s_{i}$ be bounded from above and below by positive numbers. Finally, Baraud and Birg\'e~\citeyearpar{MR2449129} tackle this problem but their approach restricts to the case of histogram-type estimators. In particular, the estimation rates they get hold for $\alpha\le 1$ only.

\subsection{Lower bounds}
The aim of this section is to show that the upper bound~\eref{minimax1} gives the right order of magnitude for the minimax rate over $S_{m}$, at least under the following assumptions.

\begin{ass}\label{H10}
The distribution of the random vector $X=(X_{1},\ldots,X_{n})$ belongs to an exponential family of the form 
\begin{equation}\label{ptheta}
dP_{\theta}=\exp\cro{\sum_{i=1}^{n}\pa{\theta_{i}T(x_{i})-A(\theta_{i})}}\bigotimes_{i=1}^{n }d\nu(x_{i})\ \text{with}\ \ \theta\in\Theta^{n}
\end{equation}
where $\nu$ denotes some measure on $\R_{+}$,  $T$ is a map from $\R_{+}$ to $\R$, $\theta_{i}$ are parameters belonging to an open interval $\Theta$ such that 
\[
\Theta\subset\ac{a\in \R,\ \int \exp\cro{aT(x)}d\nu(x)<+\infty}
\]
and $A$ denotes a smooth function from $\Theta$ into $\R$ satisfying  $A''(a)\ne 0$ for all $a\in\Theta$.
\end{ass}
These families include Poisson, Binomial and Gamma distributions (among others). Besides, it is well known that $A$ is infinitely differentiable on $\Theta$ and under $P_{\theta}$, the $X_{i}$ satisfy 
\[
\E\cro{X_{i}}=A'(\theta_{i})=s_{i}\ \ {\rm and}\ \ \Var(X_{i})=A''(\theta_{i})> 0,\ \ \forall i=1,\ldots,n.\ \ 
\]
Therefore, the unknown parameter $\sqrt{s}$ necessarily belongs to the open cube $\CC=I^{n}$ where $I$ denotes the interval $\phi\pa{A'(\Theta)}$. 

We shall also assume that the parameter space $\Theta$ is such that the following holds. 
\begin{ass}\label{H11}
There exists some $\kappa>0$ such that for all $\theta\in\Theta^{n}$, under $P_{\theta}$
\begin{equation}\label{dom}
0< \E(X_{i})\le \kappa\Var\pa{X_{i}}\ \ \forall i=1,\ldots,n.
\end{equation}
\end{ass}  
Since $A',A''$ are continuous and positive functions, such an assumption is automatically fulfilled by choosing $\Theta$ such that $\overline{\Theta}$ is compact and $A'$ and $A''$ positive on $\overline{\Theta}$.

\begin{thm}\label{borneinf}
Let $\overline V$ be a linear subspace of $\R^{n}$ with dimension $\overline D\ge 1$ and $S=\phi^{-1}(\overline V\cap\CC)$. Define 
\[
\RR=\ac{r\in(0,(2\sqrt{\kappa})^{-1}),\ \ \exists u_{0}\in \overline V\cap \CC,\ \ac{u\in \overline V,\ \norm{u-u_{0}}\le r}\subset \CC}.
\]
Under Assumptions~\ref{H10} and~\ref{H11}, 
\[
\inf_{\hat s}\sup_{s\in S}\E_{s}\cro{H^{2}(s, \hat s)}\ge {\overline D\over 30}\sup_{r\in\RR}r^{2},
\]
with the convention $\sup_{\varnothing}=0$.
\end{thm}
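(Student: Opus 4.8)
## Proof Proposal for Theorem~\ref{borneinf}

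The plan is to establish the minimax lower bound via a standard Fano-type (or Assouad-type) reduction to a testing problem over a well-chosen finite subset of $S$. The key structural observation is that on the convex cone $\CC$ the Hellinger distance $H$ between parameters in $S$ corresponds, through the map $\phi(t)=\sqrt{t}$, to the Euclidean distance on $\overline V\cap\CC$; thus the geometry of the problem is essentially that of a linear subspace of dimension $\overline D$, and we should be able to pack it with exponentially many points that are mutually $H$-separated while remaining close in Kullback--Leibler divergence. First I would fix $r\in\RR$ and $u_{0}\in\overline V\cap\CC$ as in the definition of $\RR$, so that the Euclidean ball $\ac{u\in\overline V,\ \norm{u-u_{0}}\le r}$ sits inside $\CC$. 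Inside this ball I would invoke the Varshamov--Gilbert lemma to extract a family $\ac{u_{0},\ldots,u_{\gN}}$ of $\gN\ge 2^{\overline D/c}$ points of $\overline V$ (for a universal $c$) that are pairwise separated in Euclidean norm by a constant multiple of $r$, yet all contained in the ball of radius $r$. Setting $s_{j}=\phi^{-1}(u_{j})=u_{j}^{2}$ componentwise, these points lie in $S$ and satisfy $H(s_{i},s_{j})\gtrsim r$ while $H(s_{j},s_{0})\le r$ for all $j$.

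The second step is to control the information content of this finite family. Under Assumption~\ref{H10} the $X_{i}$ form an exponential family, and the Kullback--Leibler divergence $K(P_{s_{i}},P_{s_{j}})$ between two product measures in the family can be written as a sum over coordinates of Bregman divergences of the cumulant function $A$. The crucial comparison is between this KL divergence and the squared Hellinger distance: for exponential families the two are equivalent up to constants depending on the local curvature of $A$, i.e.\ on $A''$, which is exactly $\Var(X_{i})$. Here Assumption~\ref{H11}, namely $0<\E(X_{i})\le\kappa\Var(X_{i})$, enters: it lets me bound $K(P_{s_{j}},P_{s_{0}})$ from above by a constant times the squared Euclidean distance $\norm{u_{j}-u_{0}}^{2}\le r^{2}$ times the number of coordinates, uniformly over the packing. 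Summing over coordinates yields $K(P_{s_{j}},P_{s_{0}})\lesssim \kappa\, r^{2}$ per-point, and since all $u_{j}$ lie within $r$ of $u_{0}$ the averaged divergence stays of order $r^{2}$.

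The third step is to assemble these estimates into Fano's inequality. With $\gN+1$ hypotheses that are $H$-separated by $\gtrsim r$ and with average KL divergence $\lesssim \kappa r^{2}$, Fano's bound gives a nontrivial lower bound on the minimax $H^{2}$-risk provided the divergence budget is small relative to $\log\gN\asymp\overline D$. Because $r<(2\sqrt{\kappa})^{-1}$ by definition of $\RR$, the product $\kappa r^{2}$ is bounded by a fixed fraction of $\overline D$, so the entropy term dominates and the test cannot succeed; this forces $\inf_{\hat s}\sup_{s\in S}\E_{s}[H^{2}(s,\hat s)]\gtrsim \overline D\, r^{2}$. Optimizing over $r\in\RR$ then produces the claimed bound with the explicit constant, the factor $\overline D/30$ and the supremum over $\RR$ emerging from tracking the packing constant from Varshamov--Gilbert, the KL/Hellinger comparison constant, and the Fano threshold. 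Taking $\sup_{\varnothing}=0$ handles the degenerate case $\RR=\varnothing$ trivially.

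The main obstacle I anticipate is the sharp comparison between Kullback--Leibler divergence and squared Hellinger distance that is uniform over the exponential family and yields the precise constant $1/30$. Generic bounds give $K\asymp H^{2}$ only up to constants that can blow up near the boundary of $\Theta$; the restriction $r<(2\sqrt{\kappa})^{-1}$ together with Assumption~\ref{H11} is precisely what confines the packing to a region where $A''$ is controlled and the equivalence holds with an explicit, $\kappa$-dependent constant. Getting the numerology of the Varshamov--Gilbert packing fraction and the Fano entropy threshold to combine into the stated constant will require care, but the structural argument is robust and the dimension-$\overline D$ scaling is forced by the volume of the packing.
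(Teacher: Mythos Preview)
Your Fano + Varshamov--Gilbert scheme, as you describe it, does not deliver the factor $\overline D$. Fano's inequality bounds the minimax risk from below by a constant times the \emph{squared separation} $\delta^{2}$ of the packing, provided the averaged KL stays below a fraction of $\log\gN$; the quantity $\log\gN$ enters only as the threshold the divergence must respect, not as a multiplier of the risk. With your packing inside the ball of radius $r$ and pairwise separation ``a constant multiple of $r$'', Fano gives $\inf_{\hat s}\sup_{s}\E_{s}\!\cro{H^{2}(s,\hat s)}\gtrsim r^{2}$, not $\overline D\,r^{2}$. To recover the dimension via Fano you would need separation of order $r\sqrt{\overline D}$, hence a packing that spreads over a region of diameter $r\sqrt{\overline D}$, which is not what membership in $\RR$ guarantees.

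The paper obtains the $\overline D$ factor by using Assouad's lemma instead. It builds the hypercube family $\sqrt{t^{\eps}}=u_{0}+r\sum_{j=1}^{\overline D}\eps_{j}u_{j}$, $\eps\in\{0,1\}^{\overline D}$, with $\{u_{j}\}$ an orthonormal basis of $\overline V$, so that $2H^{2}(t^{\eps},t^{\eps'})=r^{2}d_{H}(\eps,\eps')$. Assouad only requires control of the Hellinger distance between product measures at \emph{neighbouring} vertices (Hamming distance $1$); the paper supplies this via a short lemma showing, under Assumptions~\ref{H10}--\ref{H11}, that $h^{2}\pa{P_{\theta},P_{\theta'}}\le 4\kappa\,H^{2}(t,t')$. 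For neighbours this gives $h^{2}\le 2\kappa r^{2}\le 1/2$ precisely because $r<(2\sqrt{\kappa})^{-1}$, and Assouad then yields $\inf_{\hat s}\sup_{s}\E_{s}\!\cro{H^{2}(s,\hat s)}\ge \tfrac{1-\sqrt{3}/2}{4}\,\overline D\,r^{2}\ge \overline D\,r^{2}/30$. Note also that the information quantity controlled is the \emph{Hellinger} distance between the product measures (through the log--Hellinger affinity), not the Kullback--Leibler divergence you propose; the switch to Assouad and to Hellinger is what makes both the dimension factor and the explicit constant come out.
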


\section{Estimation and variable selection in non-Gaussian regression}\label{sect-regression}
In this section, we use the notations of Example~\ref{ex-marginal} and assume that we observe the random variables $X_{1},\ldots,X_{n}$ satisfying
\[
X_{i}=f_{i}+\eps_{i},\ i=1,\ldots,n
\]
where $f=(f_{1},\ldots,f_{n})$ is an unknown vector of $\R^{n}$ and the $\eps_{i}$ i.i.d.  random variables with known density $q$ on $\R$. Hereafter, we consider a family of linear subspaces $\ac{\overline V_{m},\ m\in\M}$ of $\R^{n}$ with respective dimensions denoted $\overline D_{m}$ and $\ac{\hat f_{\lambda},\ \lambda\in\Lambda}$ a family of estimators of $f$ with values in $\bigcup_{m\in\M}\overline V_{m}$ based on the observation of $X=(X_{1},\ldots,X_{n})$. 

For example, when $f$ is assumed to be of the form $\pa{F(x_{1}),\ldots,F(x_{n})}$ for some function $F$ and points $x_{1},\ldots,x_{n}$ in $[0,1]$ one can use the collection of linear spaces introduced to takle Problem~\ref{pb10}. Alternatively, if one assumes that $f$ is of the form $f=\sum_{j=1}^{p}\beta_{j}v^{(j)}$ as in Problem~\ref{pb11}, one can use the collection of $\overline V_{m}$ defined there to perform variable selection. 

As possible estimators, one can associate to each $\overline V_{m}$ the least-squares estimator of $f$ in $\overline V_{m}$ defined as $\hat f_{m}=\Pi_{m}X$ where $\Pi_{m}$ is the orthogonal projector onto $\overline V_{m}$. It is well-known that 
\begin{equation}\label{riskmc}
\E\cro{\norm{f-\hat f_{m}}^{2}}=\norm{f-\Pi_{m} f}^{2}+\overline D_{m}\sigma^{2},
\end{equation}
where $\sigma^{2}$ denotes the variance of the $\eps_{1}$ (provided that it is finite). In the context of variable selection, many efforts have been done to design (practical) selection rules among the predictors. Among the most popular procedures, we mention the Lasso and the Dantzig selector described respectively in Tibshirani~\citeyearpar{MR1379242} and Cand\`es and Tao~\citeyearpar{MR2382644}. Given a family $\Lambda$ of such procedures, an alternative family of estimators for $f$ could be that given by $\ac{\hat f_{\hat m(\lambda)},\ \lambda\in\Lambda}$ where $\hat m(\lambda)$ corresponds to the family of predictors selected by the procedure $\lambda$ in $\Lambda$. 

We shall assume the following

\begin{ass}\label{reg}
There exists some known positive numbers $R$, $\underline R, \overline R$  such that $\max_{i=1,\ldots,n}\ab{f_{i}}\le R$ and for all $r,r'\in[-R,R]$, 
\begin{equation}\label{condense}
\underline R \ab{r-r'}\le h\pa{q_{r},q_{r'}}\le \overline R\ab{r-r'}
\end{equation}
where $q_{r}(x)=q(x-r)$ for all $x,r\in\R$ and $h$ is the Hellinger distance between the densities $q_{r}$ and $q_{r'}$.  
\end{ass}

Throughout, we denote by $\overline \CC$ the cube $[-R,R]^{n}$, $q_{g}=\pa{q_{g_{1}},\ldots,q_{g_{n}}}$ for $g\in\R^{n}$ and $\LL_{0}=\ac{q_{g},\ g\in\overline \CC}$. Assumption~\ref{reg} implies that $(\LL_{0},H)$ is almost isometric to $\pa{\overline \CC,\|\ \|}$.  

Assumption~\ref{reg} holds if $\sqrt{q}$ is regular enough (see Theorem~3A page 183 in Borovkov~\citeyearpar{MR1712750}). The quantities $\overline R$ and $\underline R$ then depend on the Fisher information. Let us now turn to some examples. 

If for some known $\theta>0$
\[
q(x)={\theta\over 2}e^{-\theta\ab{x}},\ \ x\in\R
\]
then, $h^{2}\pa{q_{r},q_{r'}}=1-e^{-\theta\ab{r-r'}/2}\pa{1+ \theta\ab{r-r'}/2}$ and~\eref{condense} holds with $\overline R^{2}=1/2$ and $\underline R^{2}=(1-e^{-\theta R}(1+\theta R))/(\theta^{2}R^{2})$. Assumption~\ref{reg} can also be met even though the $\eps_{i}$ have no finite moments. For example, this is the case for 
\[
q(x)={1\over 2(1+|x|)^{2}},\ \ x\in\R.
\]
Indeed, 
\[
h^{2}(q_{r},q_{r'})=\psi(|r-r'|)\ \ \ {\rm with}\ \ \ \psi(x)=1-{2(1+x)\log(1+x)\over x(2+x)}
\] 
and since $\psi(x)/x^{2}$ is decreasing on $\R_{+}$ and tends to $1/2$ when $x$ tends to $0^{+}$, Inequality~\eref{condense} holds with $\overline R^{2}=1/2$ and $\underline R^{2}=\psi(2R)/(2R^{2})$.

\subsection{The procedure and the results}
Throughout this section, $\tau=50z^{-2}$. To each estimator $\hat f_{\lambda}$ with $\lambda\in\Lambda$, we associate the estimator of $q_{f}$ given by $\hat s_{\lambda}=q_{\hat f_{\lambda}}$. Our selection procedure is based on a suitable family of discretized subsets of $\LL_{0}$. Let us introduce two of these. 

\subsubsection*{Collection $(\C1)$}  For all $m\in\M$, let us set 
\[
S_{m}=\ac{q_{g},\ g\in \Pi_{\overline \CC}\overline V_{m}}\ \ \text{and}\ \ \S_{m}=\ac{q_{g},\ g\in \T'_{m}}
\]
where $\T'_{m}$ is the discretized set $\T'$ obtained by applying Discretization~$P1$ with $\overline V=\overline V_{m}$ and 
\[
\eta^{2}=\overline \eta_{m}^{2}=2 \times {16\overline D_{m}\over 3\underline R^{2}}.
\]

\subsubsection*{Collection $(\C2)$} For all $m\in\M$, let us set $\overline \CC_{m}=\overline\CC\cap \overline V_{m}$
\[
S_{m}=\ac{q_{g},\ g\in\overline \CC_{m}}\ \ \text{and}\ \ \S_{m}=\ac{q_{g},\ g\in \T'_{m}}
\]
$\T'_{m}$ is the discretized set $\T'$ obtained by Discretization~$P2$ with $\overline V=\overline V_{m}$ and 
\[
\eta^{2}=\overline \eta_{m}^{2}=2 \times {1.031\overline D_{m}\over \underline R^{2}}\ .
\]
We obtain the following result.

\begin{thm}\label{th-reg}
Let $\ac{\hat f_{\lambda},\ \lambda\in\Lambda}$ be an arbitrary (countable) family of estimators with values in $\bigcup_{m\in\M}\overline V_{m}$ and $\ac{\Delta_{m},\ m\in\M}$ a family of weights fulfilling~\eref{sigma}. Assume that Assumption~\ref{reg} holds.

By applying Procedure~\ref{P2} with the family of estimators $\ac{\hat s_{\lambda},\ \lambda\in\Lambda}$, the family $\ac{\S_{m},\ m\in\M}$ given by Collection $\C_{1}$, and
\[
\pen(t)=z\tau\inf\ac{4\times {2\overline D_{m}\over 3}+\Delta_{m},\ m\in\M\pa{t}}\ \ \forall t\in\S,
\]
one selects from the data some $\hat \lambda\in\Lambda$ for which the estimator $\tilde f=\hat f_{\hat \lambda}$ satisfies 
for some constant $C$ depending on $z,\overline R, \underline R$ and $\Sigma$
\[
C\E\cro{\norm{f-\tilde f}^{2}}\le \inf_{\lambda\in\Lambda}\ac{\E\cro{\norm{f-\hat f_{\lambda}}^{2}}+\E\cro{\overline D_{\hat m(\lambda)}\vee \Delta_{\hat m(\lambda)}}}.
\]

By applying Procedure~\ref{P2} with the family of estimators $\ac{\hat s_{\lambda},\ \lambda\in\Lambda}$, the family $\ac{\S_{m},\ m\in\M}$ given by Collection $\C_{2}$, and
\[
\pen(t)=z\tau\inf\ac{4\times 2.1\overline D_{m}+\Delta_{m},\ m\in\M\pa{t}}\ \ \forall t\in\S,
\]
one selects from the data some $\hat \lambda\in\Lambda$ for which the estimator $\tilde f=\hat f_{\hat \lambda}$ satisfies for some constant $C$ depending on $z,\overline R, \underline R$ and $\Sigma$
\begin{eqnarray*}
C\E\cro{\norm{f-\tilde f}^{2}}&\le&\inf_{\lambda\in\Lambda}\ac{\E\cro{\norm{f-\hat f_{\lambda}}^{2}+\E\cro{B(\hat f_{\lambda})}+\E\cro{\overline D_{\hat m(\lambda)}\vee \Delta_{\hat m(\lambda)}}}}
\end{eqnarray*}
where 
\[
B(\hat f_{\lambda})=\inf\ac{\norm{\hat f_{\lambda}-t}^{2},\ t\in\overline \CC_{\hat m(\lambda)}}.
\]
\end{thm}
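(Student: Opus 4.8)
The plan is to derive both inequalities from the indirect-selection result, Theorem~\ref{main2}, applied to the family $\ac{\hat s_{\lambda}=q_{\hat f_{\lambda}},\ \lambda\in\Lambda}$, and then to transport the resulting Hellinger-type oracle inequality back to the Euclidean risk $\E\cro{\norm{f-\tilde f}^{2}}$. Two preliminary facts organize the argument. First, we are in the setting of Example~\ref{ex-marginal} with $k=n$ and $s=q_{f}$, so Proposition~\ref{Examples} provides Assumption~\ref{bern} with $a=1/6$, $b=1$, $c=\sqrt{2}/6$; since $z\in(0,1-1/\sqrt{2})$ the convenient value $\tau=50z^{-2}$ dominates the exact threshold $4(2+cz)/(az^{2})$, which is all that Theorem~\ref{main2} requires once the nets are built with this $\tau$. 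Second, by summing~\eref{condense} over the $n$ coordinates, Assumption~\ref{reg} makes $(\LL_{0},H)$ bi-Lipschitz equivalent to $(\overline\CC,\norm{\ })$:
\[
\underline R^{2}\norm{g-g'}^{2}\le H^{2}\pa{q_{g},q_{g'}}\le\overline R^{2}\norm{g-g'}^{2},\qquad g,g'\in\overline\CC.
\]
This equivalence is used twice: to import the discretizations of Section~\ref{sect:discretisation}, which live in the Euclidean $g$-space, and to pass between $H$ and $\norm{\ }$ at the end.

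First I would verify Assumption~\ref{h-discret} for each collection. The sets $\T'_{m}\subset\R^{n}$ are produced by Discretization $P1$ (for $\C_{1}$) and $P2$ (for $\C_{2}$), whose Euclidean covering properties are recorded in Propositions~\ref{descr1} and~\ref{build}. Pushing a Euclidean net forward through $\phi$ and using the lower bound $H\ge\underline R\norm{\ }$ turns it into an $H$-net, an $H$-ball of radius $\rho$ being contained in a Euclidean ball of radius $\rho/\underline R$; the choices $\overline\eta_{m}^{2}=2\times16\overline D_{m}/(3\underline R^{2})$ for $\C_{1}$ and $\overline\eta_{m}^{2}=2\times1.031\overline D_{m}/\underline R^{2}$ for $\C_{2}$ are calibrated so that, after this distortion, $\ac{\S_{m},\ m\in\M}$ satisfies Assumption~\ref{h-discret} with $M=1$ and an effective $H$-metric dimension proportional to $\overline D_{m}$ with the factors $\underline R$ cancelled. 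This is exactly what makes the stated penalties, $z\tau(4\times2\overline D_{m}/3+\Delta_{m})$ and $z\tau(4\times2.1\overline D_{m}+\Delta_{m})$, admissible in the sense of~\eref{def-pen2}.

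With Assumptions~\ref{bern} and~\ref{h-discret} in force and the penalties admissible, Theorem~\ref{main2} yields, after absorbing $\tau$ and $\tau\cro{(M^{2}\Sigma^{2})\vee1}$ into the constant (using $\Delta_{m}\ge1$),
\[
C\,\E\cro{H^{2}\pa{q_{f},\tilde s}}\le\inf_{\lambda\in\Lambda}\ac{\E\cro{H^{2}\pa{q_{f},\hat s_{\lambda}}}+\E\cro{A(\hat s_{\lambda},\S)}}.
\]
The crux is to bound $A(\hat s_{\lambda},\S)=\inf_{t\in\S}\cro{H^{2}(\hat s_{\lambda},t)+\pen(t)}$ by testing $t$ against the net $\S_{\hat m(\lambda)}$, and this is where the two collections diverge. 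For $\C_{1}$ the model is $S_{m}=\ac{q_{g},\ g\in\Pi_{\overline\CC}\overline V_{m}}$; since $\hat f_{\lambda}\in\overline V_{\hat m(\lambda)}$, the point $\Pi_{\overline\CC}\hat f_{\lambda}$ lies in $\Pi_{\overline\CC}\overline V_{\hat m(\lambda)}$ at Euclidean distance $\norm{\hat f_{\lambda}-\Pi_{\overline\CC}\hat f_{\lambda}}\le\norm{f-\hat f_{\lambda}}$ from $\hat f_{\lambda}$ (as $f\in\overline\CC$). Hence the approximation bound $H(t,\S)\le4H(t,S)$ of Proposition~\ref{descr1} gives $A(\hat s_{\lambda},\S)\le C\norm{f-\hat f_{\lambda}}^{2}+\pen$, and the first part merges into the risk term, leaving no separate bias. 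For $\C_{2}$ the model is $S_{m}=\ac{q_{g},\ g\in\overline\CC\cap\overline V_{m}}$, and the nearest admissible point to $\hat f_{\lambda}$ is $\Pi_{\overline\CC_{\hat m(\lambda)}}\hat f_{\lambda}$ at squared distance $B(\hat f_{\lambda})$; this quantity contains an approximation component not dominated by $\norm{f-\hat f_{\lambda}}^{2}$ (because $f$ need not lie in $\overline V_{m}$), so the net property of Proposition~\ref{build} produces the extra term $\E\cro{B(\hat f_{\lambda})}$.

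It then remains to convert to the Euclidean risk via the displayed bi-Lipschitz bounds: $\underline R^{2}\norm{f-\tilde f}^{2}\le H^{2}(q_{f},q_{\tilde f})$ on the left and $H^{2}(q_{f},\hat s_{\lambda})\le\overline R^{2}\norm{f-\hat f_{\lambda}}^{2}$ on the right, after which $\underline R$, $\overline R$, $\tau$ and the net constants are absorbed into $C$. I expect the main obstacle to be the discretization bookkeeping of the second paragraph — transporting the Euclidean estimates of Propositions~\ref{descr1} and~\ref{build} into Assumption~\ref{h-discret} with the correct dependence on $\underline R$ and the precise constants $16/3$, $1.031$, $2/3$, $2.1$ — together with the $A(\hat s_{\lambda},\S)$ estimate, where one must track carefully why $\Pi_{\overline\CC}\overline V_{m}$ lets the out-of-cube error of $q_{\hat f_{\lambda}}$ be absorbed into the risk while $\overline\CC\cap\overline V_{m}$ does not, and handle those coordinates of the estimators falling outside $\overline\CC$ when applying~\eref{condense}.
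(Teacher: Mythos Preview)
Your proposal is correct and follows essentially the same route as the paper: verify Assumption~\ref{bern} via Proposition~\ref{Examples}, transport the Euclidean nets of Propositions~\ref{descr1} and~\ref{build} through the bi-Lipschitz equivalence $H\asymp\underline R\norm{\ }$ to obtain Assumption~\ref{h-discret}, apply Theorem~\ref{main2}, bound $A(\hat s_{\lambda},\S)$ by testing against $\S_{\hat m(\lambda)}$, and convert back to $\norm{\ }$. The only cosmetic difference is that for $\C_{1}$ the paper bounds $\inf_{t\in\T'_{m}}\norm{\hat f_{\lambda}-t}^{2}$ by triangulating through $f$ (using $\norm{f-\hat f_{\lambda}}^{2}+\inf_{t}\norm{f-t}^{2}$ and then the net property of $\T_{m}$ on $\overline V_{m}\ni f$-projections), whereas you go through $\Pi_{\overline\CC}\hat f_{\lambda}$ directly; both yield the same estimate, and your flagged concern about coordinates of $\hat f_{\lambda}$ outside $\overline\CC$ is handled the same way in the paper, namely by doing the $A$-term computation in the Euclidean metric first and only invoking~\eref{condense} on pairs in $\overline\CC$.
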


If the family of estimators $\hat f_{\lambda}$ take their values in $\pa{\bigcup_{m\in\M}\overline V_{m}} \cap \overline \CC$ then $B(\hat f_{\lambda})=0$ and the same risk bound for $\tilde f$ is achievable with both Collections $\C2$ and $\C1$.

The proof of Theorem~\ref{th-reg} is postponed to Section~\ref{sect-th-reg}.

For illustration, we deduce the following corollaries in the context of variable selection. Hereafter, we consider the family of linear spaces $\ac{\overline V_{m},\ m\in\M}$ given in Problem~\ref{pb11}. 

\begin{cor}
For $m\in\M$, let $\ac{f_{\lambda},\ \lambda\in\Lambda(m)}$ be any countable and dense subset of $\overline V_{m}$. Define $\hat m(\lambda)=m$ if $\lambda\in \Lambda(m)$ and apply the procedure described in Theorem~\ref{th-reg} with the collection $\C_{1}$ and the family of estimators $\ac{f_{\lambda},\ \lambda\in\bigcup_{m\in\M}\Lambda(m)}$. Under Assumption~\ref{reg}, one selects a subset of predictors $\ac{v^{(j)},\ j\in\hat m(\hat \lambda)}$ for which the estimator  $\tilde f=\hat f_{\hat \lambda}\in\overline V_{\hat m(\hat \lambda)}$ satisfies
\[
\E\cro{\norm{f-\tilde f}^{2}}\le C\inf_{m\in\M}\ac{\norm{f-\Pi_{m}f}^{2}+\overline D_m\vee \Delta_{m}}
\]
where $C$ depends on $z,\overline R,\underline R$ and $\Sigma$.
\end{cor}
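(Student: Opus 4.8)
The plan is to obtain the statement as a direct specialization of Theorem~\ref{th-reg} (the version using Collection $\C_1$) applied to the degenerate family $\hat f_\lambda = f_\lambda$, in which the ``estimators'' are deterministic points of $\R^n$. First I would check that the hypotheses of Theorem~\ref{th-reg} are met: Assumption~\ref{reg} is assumed, the weights $\ac{\Delta_m,\ m\in\M}$ satisfy~\eref{sigma}, and each $f_\lambda$ with $\lambda\in\Lambda(m)$ lies in $\overline V_m$, so that the gathered family $\ac{f_\lambda,\ \lambda\in\bigcup_m\Lambda(m)}$ indeed takes its values in $\bigcup_{m\in\M}\overline V_m$ as the theorem requires. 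The index assignment $\hat m(\lambda)=m$ for $\lambda\in\Lambda(m)$ is the natural one, and since membership in $\overline V_m$ (a linear subspace, not necessarily contained in $\overline\CC$) is all that is needed, there is nothing further to verify: it is precisely the projection onto $\overline\CC$ built into the definition of $S_m$ for Collection $\C_1$, together with the approximation property $H(t,\S)\le 4H(t,S)$ of Proposition~\ref{descr1}, that makes the $\C_1$ bound free of any extra term of type $B(\cdot)$ (this is why $\C_1$ rather than $\C_2$ is used here).

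The first conclusion of Theorem~\ref{th-reg} then gives
\[
C\E\cro{\norm{f-\tilde f}^{2}}\le \inf_{\lambda\in\Lambda}\ac{\E\cro{\norm{f-\hat f_{\lambda}}^{2}}+\E\cro{\overline D_{\hat m(\lambda)}\vee\Delta_{\hat m(\lambda)}}}.
\]
Because each $f_\lambda$ and each index $\hat m(\lambda)$ is deterministic, the expectations on the right drop out, and splitting the infimum according to which $\Lambda(m)$ the index $\lambda$ belongs to (using $\hat m(\lambda)=m$ on $\Lambda(m)$) rewrites the bound as
\[
\inf_{m\in\M}\ac{\pa{\inf_{\lambda\in\Lambda(m)}\norm{f-f_\lambda}^2}+\overline D_m\vee\Delta_m}.
\]

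The final step is the density argument. Since $\ac{f_\lambda,\ \lambda\in\Lambda(m)}$ is dense in $\overline V_m$ and the map $g\mapsto\norm{f-g}^2$ is continuous, the inner infimum equals $\inf_{g\in\overline V_m}\norm{f-g}^2$, which is attained at the orthogonal projection $\Pi_m f$ onto the closed linear space $\overline V_m$, hence equals $\norm{f-\Pi_m f}^2$. Substituting yields the announced inequality, with the constant $C$ depending on $z,\overline R,\underline R$ and $\Sigma$ exactly as in Theorem~\ref{th-reg}. The selected estimator $\tilde f=\hat f_{\hat\lambda}=f_{\hat\lambda}$ belongs to $\overline V_{\hat m(\hat\lambda)}$, so the chosen predictors are those indexed by $\hat m(\hat\lambda)$, as claimed.

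There is essentially no hard step: the entire analytic content is carried by Theorem~\ref{th-reg}, and only two bookkeeping points require care. The first is ensuring the $f_\lambda$ qualify as a family with values in $\bigcup_m\overline V_m$ so that no $B(\cdot)$-type remainder appears. The second, which I would single out as the only real point of attention, is the admissibility of the index map $\hat m(\lambda)=m$: one should treat the $\Lambda(m)$ as disjoint index sets (relabelling if necessary) so that $\hat m$ is well defined, after which the passage from the infimum over the dense countable family to the orthogonal projection is routine.
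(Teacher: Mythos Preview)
Your proposal is correct and follows exactly the approach intended by the paper, which does not give an explicit proof for this corollary but treats it as an immediate consequence of Theorem~\ref{th-reg} applied with Collection~$\C_1$ to the deterministic family of points $\ac{f_\lambda}$. Your handling of the density argument and the bookkeeping on $\hat m(\lambda)$ are precisely what is needed.
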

Provided that the distribution of the errors is known and the mean $f$ bounded by some known constant, this result shows that complete variable selection is possible even though the errors may not admit any finite moments. 

Let us now turn to some result showing how to select among families of least-squares estimators $\ac{\hat f_{m},\ m\in\M}$ as those introduced at the beginning of the section. Hereafter we take, $\Lambda=\M$, choose $\hat m(\lambda)=m$ for all $\lambda\in\Lambda$ and define 
$m^{*}$ as any minimizer of $|m|\vee \Delta_{m}$ among those $m\in\M$ such $f\in\overline \CC_{m}$.

\begin{cor}\label{derder}
Assume that $\sigma<+\infty$ and that Assumption~\ref{reg} holds true. Let  $\ac{\Delta_{m},\ m\in\M}$ be a family of weights satisfying~\eref{sigma}. Consider the family of least-squares estimators $\ac{\hat f_{m}=\Pi_{m}X,\ m\in\M}$ and apply the selection procedure described in Theorem~\ref{th-reg} with the collection $(\C2)$. The resulting estimator $\tilde f\in V_{\hat m}$ satisfies, 
\[
C\E\cro{\norm{f-\tilde f}^{2}}\le \E\cro{\norm{f-\hat f_{m^{*}}}^{2}}\vee\Delta_{m^{*}}
\]
where $C$ depends on $z,\overline R,\underline R,\Sigma$ and $\sigma$.
\end{cor}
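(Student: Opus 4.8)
The plan is to apply Theorem~\ref{th-reg} to the least-squares estimators $\hat f_{m}=\Pi_{m}X$, taking $\Lambda=\M$ with the deterministic choice $\hat m(\lambda)=m$, together with Collection $(\C2)$, and then to keep only the single index $m=m^{*}$ in the resulting infimum. Because $\hat m(\lambda)=m$ is deterministic here, each expectation $\E\cro{\overline D_{\hat m(\lambda)}\vee\Delta_{\hat m(\lambda)}}$ collapses to the number $\overline D_{m}\vee\Delta_{m}$, so Theorem~\ref{th-reg} gives, for a constant depending on $z,\overline R,\underline R,\Sigma$ only,
\[
C\E\cro{\norm{f-\tilde f}^{2}}\le \inf_{m\in\M}\ac{\E\cro{\norm{f-\hat f_{m}}^{2}}+\E\cro{B(\hat f_{m})}+\overline D_{m}\vee\Delta_{m}}.
\]
Bounding the infimum by the summand at $m=m^{*}$ reduces the whole proof to controlling the three terms at $m^{*}$.

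The decisive feature is the definition of $m^{*}$, which forces $f\in\overline\CC_{m^{*}}=\overline\CC\cap\overline V_{m^{*}}$; in particular $f\in\overline V_{m^{*}}$ and therefore $\Pi_{m^{*}}f=f$. First, since $f\in\overline\CC_{m^{*}}$ is an admissible competitor in the infimum defining $B$, one has $B(\hat f_{m^{*}})\le\norm{\hat f_{m^{*}}-f}^{2}$ almost surely, whence $\E\cro{B(\hat f_{m^{*}})}\le\E\cro{\norm{f-\hat f_{m^{*}}}^{2}}$. Second, the least-squares risk identity~\eref{riskmc} gives $\E\cro{\norm{f-\hat f_{m^{*}}}^{2}}=\norm{f-\Pi_{m^{*}}f}^{2}+\overline D_{m^{*}}\sigma^{2}=\overline D_{m^{*}}\sigma^{2}$ because the bias vanishes; hence $\overline D_{m^{*}}=\sigma^{-2}\E\cro{\norm{f-\hat f_{m^{*}}}^{2}}$ and consequently
\[
\overline D_{m^{*}}\vee\Delta_{m^{*}}\le\pa{\sigma^{-2}\vee 1}\pa{\E\cro{\norm{f-\hat f_{m^{*}}}^{2}}\vee\Delta_{m^{*}}}.
\]

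Collecting the three bounds, the first two summands sum to at most $2\E\cro{\norm{f-\hat f_{m^{*}}}^{2}}\le 2\pa{\E\cro{\norm{f-\hat f_{m^{*}}}^{2}}\vee\Delta_{m^{*}}}$, while the third is controlled as above, so the right-hand side is at most a $\sigma$-dependent constant times $\E\cro{\norm{f-\hat f_{m^{*}}}^{2}}\vee\Delta_{m^{*}}$. Dividing through by that constant and relabelling yields the claimed inequality with $C$ depending on $z,\overline R,\underline R,\Sigma$ and $\sigma$. I expect no genuine obstacle beyond Theorem~\ref{th-reg} itself; the only point requiring care is the bookkeeping that turns $\overline D_{m^{*}}$ into a risk term through the vanishing of the bias at $m^{*}$, which is precisely what the choice of $m^{*}$ as a minimizer of $|m|\vee\Delta_{m}$ over models containing $f$ is designed to guarantee.
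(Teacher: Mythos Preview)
Your proof is correct and follows the same route as the paper: apply Theorem~\ref{th-reg} with Collection $(\C2)$, bound the infimum by the single index $m^{*}$, and use $f\in\overline\CC_{m^{*}}$ to control $B(\hat f_{m^{*}})$. The paper's two-sentence proof leaves implicit the step where $\overline D_{m^{*}}$ is absorbed into the risk via~\eref{riskmc} with vanishing bias, which you spell out explicitly---this is exactly the detail that introduces the dependence of $C$ on $\sigma$.
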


\begin{proof}
Note that $B(\hat f_{m^{*}})\le \norm{\hat f_{m^{*}}-f}^{2}$ since $f\in\overline \CC_{m^{*}}$. The result follows by applying Theorem~\ref{th-reg} and choosing $\lambda=m^{*}$ to bound the infimum from above.
\end{proof}

\section{Proofs}\label{sect:proof}
\subsection{Proof of Theorem~\ref{main}}\label{sect-pmain}
Throughout, $\kappa=z+1/\sqrt{2}$. Hereafter, we fix some estimator $\hat s_{\lambda}$ and assume first that $\EE(\hat s_{\lambda})\neq \varnothing$. Therefore, there exists $\hat s_{\lambda'}\in\EE(\hat s_{\lambda})$ with $\hat s_{\lambda'}\neq \hat s_{\lambda}$. By using Proposition~\ref{var2} with $r=(\hat s_{\lambda}+\hat s_{\lambda'})/2$ and the fact that $\gT(N,\hat s_{\lambda},\hat s_{\lambda'})\ge 0$, we get
\begin{eqnarray*}
H^{2}(s,\hat s_{\lambda'})-H^{2}(s,\hat s_{\lambda})&=& \cro{\rho\pa{s,\hat s_{\lambda}}-{1\over 2}\int_{\X}\hat s_{\lambda}d\mu}-\cro{\rho\pa{s,\hat s_{\lambda'}}-{1\over 2}\int_{\X}\hat s_{\lambda'}d\mu}\\
&=& -\gT(N,\hat s_{\lambda},\hat s_{\lambda'})+\pen(\hat s_{\lambda})-\pen(\hat s_{\lambda'})\\
&&\ +  \cro{\rho\pa{s,\hat s_{\lambda}}-\rho_{r}\pa{sd\mu,\hat s_{\lambda}}}-\cro{\rho\pa{s,\hat s_{\lambda'}}-\rho_{r}\pa{sd\mu,\hat s_{\lambda'}}}\\
&&\ + \cro{\rho_{r}\pa{sd\mu,\hat s_{\lambda}}-\rho_{r}\pa{sd\mu,\hat s_{\lambda}}}-\cro{\rho_{r}\pa{sd\mu,\hat s_{\lambda'}}-\rho_{r}\pa{sd\mu,\hat s_{\lambda'}}}\\
&\le& {1\over \sqrt{2}}\cro{H^{2}\pa{s,\hat s_{\lambda}}+H^{2}\pa{s,\hat s_{\lambda'}}}\\
&&\ + Z(N,\hat s_{\lambda},\hat s_{\lambda'})+\pen(\hat s_{\lambda})-\pen(\hat s_{\lambda'})
\end{eqnarray*}
and therefore,
\[
\pa{1-{1\over \sqrt{2}}}H^{2}\pa{s,\hat s_{\lambda'}}
\le\pa{1+{1\over \sqrt{2}}}H^{2}\pa{s,\hat s_{\lambda}}+ Z(N,\hat s_{\lambda},\hat s_{\lambda'})+\pen(\hat s_{\lambda})-\pen(\hat s_{\lambda'}).
\]
For $\xi>0$, let us set 
\[
y^{2}(m,m',\xi)= \tau \pa{d_{m}\vee d_{m'}+\Delta_{m}+\Delta_{m'}+\xi},
\]
and
\[ 
\Omega_{\xi}=\bigcap_{(m,m')\in\M^{2}}\ac{\sup_{(t,t')\in S_{m}\times S_{m'}}{Z(N,t,t')\over w^{2}(t,t',y(m,m',\xi))}\le z}.
\]
Note that under Assumption~\ref{hypo}, $\P\pa{\Omega_{\xi}}\ge 1-\gamma\Sigma^{2} e^{-\xi}$. On $\Omega_{\xi}$, 
\begin{eqnarray*}
Z(N,\hat s_{\lambda},\hat s_{\lambda'})&\le& zH^{2}(s,\hat s_{\lambda})+zH^{2}(s,\hat s_{\lambda'})\\
&& \ \ + z\inf\ac{y^{2}(m,m',\xi),\ (m,m')\in{\M}(\hat s_{\lambda})\times {\M}(\hat s_{\lambda'})}\\
&\le& zH^{2}(s,\hat s_{\lambda})+zH^{2}(s,\hat s_{\lambda'})\\
&& \ \ +z\tau\inf_{(m,m')\in\M(\hat s_{\lambda})\times\M(\hat s_{\lambda'})}\pa{d_{m}+d_{m'}+\Delta_{m}+\Delta_{m'}+\xi}
\end{eqnarray*}
and since for all $\lambda\in\Lambda$,
\[
\pen(\hat s_{\lambda})\ge z\tau\inf_{m\in\M(\hat s_{\lambda})}\pa{d_{m}+\Delta_{m}},
\]
we have
\[
\pa{1-\kappa}H^{2}\pa{s,\hat s_{\lambda'}}\le \pa{1+\kappa}H^{2}\pa{s,\hat s_{\lambda}}+2\pen(\hat s_{\lambda})+z\tau\xi.
\]
Let us set $\alpha=(1+\kappa)/(1-\kappa)$. Since $\hat s_{\lambda'}$ is arbitrary among $\EE(\hat s_{\lambda})$, 
we deduce that on $\Omega_{\xi}$,
\begin{eqnarray*}
\D(\hat s_{\lambda})&=&\sup_{\hat s_{\lambda'}\in\EE(\hat s_{\lambda})}H^{2}\pa{\hat s_{\lambda},\hat s_{\lambda'}}\\
&\le&  \pa{1+\sqrt{\alpha}}H^{2}\pa{s,\hat s_{\lambda}}+\pa{1+{1\over\sqrt{\alpha}}}\sup_{\lambda'\in\EE(\hat s_{\lambda})}H^{2}\pa{s,\hat s_{\lambda'}}\\
&\le& \pa{1+\sqrt{\alpha}}^{2}H^{2}\pa{ s,\hat s_{\lambda}}+{2(1+\sqrt{\alpha})\over \sqrt{1-\kappa^{2}}}\pen(\hat s_{\lambda})+{z(1+\sqrt{\alpha})\over\sqrt{1-\kappa^{2}}}\tau\xi.
\end{eqnarray*}

Note that this bounds is obviously true if $\EE(\hat s_{\lambda})=0$ since then $\D(\hat s_{\lambda})=0$. Now by using that $\D(\hat s_{\hat \lambda})\le \D(\hat s_{\lambda})+\tau$, we obtain
\begin{eqnarray*}
H^{2}\pa{s,\hat s_{\hat \lambda}}&\le& \pa{2+\sqrt{\alpha}}H^{2}\pa{s,\hat s_{\lambda}}+\pa{1+{1\over 1+\sqrt{\alpha}}}H^{2}\pa{\hat s_{\lambda},\hat s_{\hat \lambda}}\\
&\le& \pa{2+\sqrt{\alpha}}H^{2}\pa{s,\hat s_{\lambda}}+\pa{1+{1\over 1+\sqrt{\alpha}}}\pa{\D(\hat s_{\hat \lambda})\vee\D(\hat s_{\lambda})}\\
&\le&  \pa{2+\sqrt{\alpha}}H^{2}\pa{s,\hat s_{\lambda}}+\pa{1+{1\over 1+\sqrt{\alpha}}}\D(\hat s_{\lambda})+\pa{1+{1\over 1+\sqrt{\alpha}}}\tau\\
&\le& \pa{2+\sqrt{\alpha}}\cro{\pa{2+\sqrt{\alpha}}H^{2}\pa{s,\hat s_{\lambda}}+{2\over \sqrt{1-\kappa^{2}}}\pen(\hat s_{\lambda})+{z\tau\over z(1+\sqrt{\alpha})}}\\
&&\ +\ {z\pa{2+\sqrt{\alpha}}\over \sqrt{1-\kappa^{2}}}\tau\xi\\
&\le& C_{1}(z)\cro{H^{2}\pa{s,\hat s_{\lambda}}+\pen(\hat s_{\lambda})}+C_{2}(z)\tau\xi
\end{eqnarray*}
with $\kappa=z+1/\sqrt{2}$, $\alpha=(1+\kappa)/(1-\kappa)$ and 
\begin{eqnarray}
C_{1}(z)&=&\pa{2+\sqrt{\alpha}}\max\ac{\pa{2+\sqrt{\alpha}},{2\over \sqrt{1-\kappa^{2}}}+{1\over z(1+\sqrt{\alpha})}}\label{C1}\\
C_{2}(z)&=&{z\pa{2+\sqrt{\alpha}}\over \sqrt{1-\kappa^{2}}}.\label{C2}
\end{eqnarray}
Finally, we conclude by using that $\P\pa{\Omega_{\xi}}\ge 1-\gamma\Sigma^{2} e^{-\xi}$ and the fact that $\hat s_{\lambda}$ is arbitrary.

\subsection{Proof of Lemma~\ref{lem1}}
Let $\xi>0$ and 
\[
y^{2}\ge \tau\cro{4\pa{\eta_{m}^{2}\vee \eta_{m'}^{2}}+\xi}\ge 4\tau \pa{\eta_{m}^{2}\vee \eta_{m'}^{2}}.
\]
We set $C_{0}=(\S_{m}\cap\B(s,y))\times (\S_{m'}\cap\B(s,y))$ and for $j\ge 1$, 
\[
C_{j}=\ac{(t,t')\in \S_{m}\times \S_{m'},\ \ 2^{j-1}y^{2}<H^{2}\pa{s,t}+H^{2}\pa{s,t'}\le 2^{j}y^{2}}.
\]
Note that for all $j\ge 0$, $C_{j}\subset \pa{\S_{m}\cap\B(s,2^{j/2}y)}\times\pa{\S_{m}\cap\B(s,2^{j/2}y)}$ and that for $(t,t')\in C_{j}$,  $w^{2}(t,t',y)=\pa{H^{2}(s,t)+H^{2}(s,t')}\vee y^{2}\ge (2^{j-1}\vee 1)y^{2}$. By using Assumptions~\ref{bern} and~\ref{h-discret}, we get
\begin{eqnarray*}
\lefteqn{\P\cro{\sup_{(t,t')\in \S_{m}\times \S_{m'}}{Z(N,t,t')\over w^{2}(t,t',y)}\ge z}}\\
&\le& \sum_{(t,t')\in C_{0}}\P\cro{Z(N,t,t')\ge zy^{2}}+\sum_{j\ge 1}\sum_{(t,t')\in C_{j}}\P\cro{Z(N,t,t')\ge z2^{j-1}y^{2}}\\
&\le& b\ab{\S_{m}\cap\B(s,y)}\ab{\S_{m'}\cap\B(s,y)}\exp\cro{-{az^{2}y^{4}\over y^{2}+czy^{2}}}\\
&&\ +b\sum_{j\ge 1}|\S_{m}\cap\B(s,2^{j/2}y)||\S_{m'}\cap\B(s,2^{j/2}y)|\exp\cro{-{az^{2}2^{2(j-1)}y^{4}\over 2^{j}y^{2}+cz2^{j-1}y^{2}}}\\
&\le& bM^{2}\exp\cro{\pa{{1\over \tau}-{az^{2}\over 1+cz}}y^{2}} +bM^{2}\sum_{j\ge 1}\exp\cro{\pa{{1\over \tau}-{az^{2}\over 2(2+cz)}}2^{j}y^{2}}\\
&\le& bM^{2}\sum_{j\ge 0}\exp\cro{{2^{j}y^{2}\over \tau}}
\end{eqnarray*}
recalling that $\tau=4(2+cz)/(az^{2})$. By using that  
\[
\tau^{-1}y^{2}\ge 4(\eta_{m}^{2}\vee \eta_{m'}^{2})+ \xi\ge 1+\xi
\]
and the inequality $2^{j}\ge j+1$ which holds for all $j\ge 0$, we finally obtain
\begin{eqnarray*}
\P\cro{\sup_{(t,t')\in \S_{m}\times \S_{m'}}{Z(N,t,t')\over w^{2}(t,t',y)}\ge z}&\le& bM^{2}\sum_{j\ge 0}\exp\cro{-(j+1)(1+\xi)}\\
&\le& bM^{2}e^{-\xi}.
\end{eqnarray*}

\subsection{The proof of Theorem~\ref{main2}}
The proof relies on Theorem~\ref{main} and Lemma~\ref{lem1}. 

By definition of $\tilde s$, for all $\lambda\in\Lambda$
\begin{eqnarray}
H^{2}\pa{s,\tilde s}&\le& 2H^{2}\pa{s,\tilde s_{\tilde \lambda}}+2H^{2}\pa{\tilde s_{\tilde \lambda},\hat s_{\hat \lambda}}\nonumber \\
&\le& 2H^{2}\pa{s,\tilde s_{\tilde \lambda}}+2H^{2}\pa{\tilde s_{\tilde \lambda},\hat s_{\lambda}}+2\tau.\label{bis1}
\end{eqnarray}
By Lemma~\ref{lem1}, the collection of models $\ac{\S_{m},\ m\in\M}$ satisfies Assumption~\ref{hypo}, we can therefore apply Theorem~\ref{main} with the family of estimators $\ac{\tilde s_{\lambda},\ \lambda\in\Lambda}$ and get that with probability at least $1-\gamma\Sigma^{2}e^{-\xi}$ (with $\gamma=bM^{2}$), 
\begin{eqnarray*}
H^{2}\pa{s,\tilde s_{\tilde \lambda}}&\le& C(z)\cro{H^{2}\pa{s,\tilde s_{\lambda}}+\pen(\tilde s_{\lambda})+\tau (\xi+1)}\\
&\le& C(z)\cro{2H^{2}\pa{s,\hat s_{\lambda}}+2H^{2}\pa{\hat s_{\lambda},\tilde s_{\lambda}}+\pen(\tilde s_{\lambda})+\tau (\xi+1)}
\end{eqnarray*}
which with~\eref{bis1} and the fact that $\pen(\tilde s_{\lambda})\ge \tau z$ ($\Delta_{m}\ge 1$ for all $m$) leads to
\begin{eqnarray*}
H^{2}\pa{s,\tilde s}&\le& C'(z)\cro{H^{2}\pa{s,\hat s_{\lambda}}+A(\hat s_{\lambda},\S)+\tau(\xi+1)}
\end{eqnarray*}
and conclude the proof of the first part of Theorem~\ref{main2}. The second part follows by integration with respect to $\xi$. 

\subsection{Proof of Proposition~\ref{Examples}}\label{preuves-ex}
\paragraph{Case of Examples~\ref{ex-densite} and~\ref{ex-marginal}}
It suffices to prove the result in the case of Example~\ref{ex-marginal}, the result for Example~\ref{ex-densite} being obtained similarly by changing $Z(N,t,t')$ into $Z(N,t,t')/n$.

Note that for all $t,t'\in\LL_{0}$, 
\[
Z(N,t,t')=\sum_{i=1}^{n}\pa{\psi(t_{i},t'_{i},X_{i})-\E\cro{\psi(t_{i},t'_{i},X_{i})}}
\]
is a sum of independent and centered random variables bounded by $\sqrt{2}$. Besides, by setting $r_{i}=(t_{i}+t'_{i})/2$ for $i=1,\ldots, n$ and  using that for all $x_{i}\in\X_{i}$, $(t(x_{i})\vee t'_{i}(x_{i})/r_{i}(x_{i})\le 2$ we have
\begin{eqnarray*}
4\E\cro{Z^{2}(t,t',N)}&\le& \sum_{i=1}^{n}\int_{\X_{i}} \pa{\sqrt{t_{i}}-\sqrt{t'_{i}}}^{2}{s_{i}\over r_{i}}d\mu_{i}\\
&=&\sum_{i=1}^{n}\int_{\X_{i}} \pa{\sqrt{t_{i}}-\sqrt{t'_{i}}}^{2}\pa{\sqrt{{s_{i}\over r_{i}}}-1+1}^{2}d\mu_{i}\\
&\le& 2\sum_{i=1}^{n}\int_{\X_{i}} \pa{\sqrt{t_{i}}-\sqrt{t'_{i}}}^{2}\pa{\sqrt{{s_{i}\over r_{i}}}-1}^{2}d\mu_{i}\\
&& \ \ +2\sum_{i=1}^{n}\int_{\X_{i}} \pa{\sqrt{t_{i}}-\sqrt{t'_{i}}}^{2}d\mu_{i}\\
&=& 2\sum_{i=1}^{n}\int_{\X_{i}} {\pa{\sqrt{t_{i}}-\sqrt{t'_{i}}}^{2}\over r_{i}}\pa{\sqrt{s_{i}}-\sqrt{r_{i}}}^{2}d\mu_{i}+4H^{2}\pa{t,t'}\\
&\le& 8\pa{H^{2}\pa{s,r}+H^{2}\pa{s,t}+H^{2}\pa{s,t'}}.
\end{eqnarray*}
Since the concavity of $u\mapsto \sqrt{u}$ implies
$2H^{2}\pa{s,r}\le H^{2}\pa{s,t}+H^{2}\pa{s,t'}$, 
we have obtain that $t,t'\in\B(s,y)$
\[
{\rm Var}\pa{Z(t,t',N)}\le \E\cro{Z^{2}(t,t',N)}\le 3\cro{H^{2}\pa{s,t}+H^{2}\pa{s,t'}}\le 6y^{2}.
\]
By applying Bernstein's inequality, we obtain that Assumption~\ref{bern} is fulfilled with $b=1$, $a=1/6$ and $c=\sqrt{2}/6$. 

\paragraph{Case of Example~\ref{ex-poisson}}
In this case, 
\[
Z(t,t',N)=\int_{\X}\psi\pa{t,t',x}\pa{dN(x)-s(x)d\mu}
\]
where $\psi$ is bounded with values in $[-1/\sqrt{2},1/\sqrt{2}]$ and, by arguing as in Section~\ref{preuves-ex}, it satisfies
\[
\int_{\X}\psi^{2}\pa{t,t',x}s(x)d\mu\le 3\pa{H^{2}(s,t)+H^{2}(s,t')}\le 6y^{2}
\]
for all $t,t'\in\B(s,y)$. By applying Proposition~7 in Reynaud-Bouret~\citeyearpar{MR1981635} we obtain that $Z(N,t,t')$ satisfies~\eref{bern} with $a=1/12$, $b=1$ and $c=\sqrt{2}/36$.

\subsection{Proof of Corollary~\ref{cor2ter}}
Since for all $m\in\M$, $\eta_{m}^{2}=2(D_{m}\vee 1/8)$, Assumption~\ref{h-discret} holds. We can therefore apply Theorem~\ref{main2}. Since for all $\lambda\in\Lambda$, $\S_{m}$ is a $\eta_{\hat m(\lambda)}\sqrt{\tau}$-net for $S_{m}$ and since equality holds in~\eref{def-pen2}, 
\begin{eqnarray*}
A(\hat s_{\lambda},\S)&\le& \tau\pa{(1+4z)\eta_{\hat m(\lambda)}^{2}+z\Delta_{\hat m(\lambda)}}\\
&\le& \tau\pa{2(1+4z)(D_{\hat m(\lambda)}\vee (1/8))+z\Delta_{\hat m(\lambda)}}\\
&\le& \tau\pa{2(1+4z)(D_{\hat m(\lambda)})+\pa{1/4+2z}\Delta_{\hat m(\lambda)}},\ \forall \lambda\in\Lambda,
\end{eqnarray*}
which leads to the result.

\subsection{Proofs of Propositions~\ref{agreg} and~\ref{Lucien}}\label{preuves-cor}
\paragraph{Proof of Proposition~\ref{agreg}}
Take for $m\in\M=\Lambda$, $\S_{m}=S_{m}=\ac{s_{m}}$ and note that Assumption~\ref{h-discret} holds with $M=1$ and $\eta_{m}=0$. By Lemma~\ref{lem1}, Assumption~\ref{h-discret} with $d_{m}=0$ and $\gamma=b$ and the result follows by applying Corollary~\ref{cor1}.

\paragraph{Proof of Proposition~\ref{Lucien}}
Inequality~\eref{b1} follows from~\eref{pasmal} and the fact that since $\S_{m}$ is a $(M,\eta_{m}\sqrt{\tau},D_{m})$-net for $S_{m}$
\[
H^{2}(s,\S_{m})\le 2H^{2}(s,S_{m})+2\tau\eta_{m}^{2}\le 2H^{2}(s,S_{m})+4\tau(D_{m}\vee \Delta_{m}).
\]
Since Assumption~\ref{hypo} holds from Lemma~\ref{lem1}, we obtain~\eref{pasmal} by applying Corollary~\ref{cor1} with $\hat s_{\lambda}=s_{\lambda}$, noting that 
\[
\inf_{\lambda\in\Lambda}\cro{H^{2}(s,s_{\lambda})+\pen(s_{\lambda})}\le \inf_{m\in\M}\cro{H^{2}(s,\S_{m})+zt\pa{4\eta_{m}^{2}+\Delta_{m}}}.
\]

\subsection{Proof of Theorem~\ref{histo}}
It suffices to prove that Assumption~\ref{hypo} holds with $d_{m}=\delta|m|$ and then to apply Corollary~\ref{cor1}. 

Let $\xi>0$ and $y$ such that 
\[
y^{2}\ge \tau \pa{d_{m}\vee d_{m'}+\xi}.
\]
For $m,m'\in\M$, $t\in S_{m}$ and $t'\in S_{m'}$, $t$ and $t'$ are constant on each element $I\in m\vee m'$ with value $t_{I},t'_{I}$ respectively and therefore so is $\psi(t,t',.)$: 
\[
\psi(t,t',x)=\psi(t_{I},t_{I}')={1\over \sqrt{2}}\cro{\ \sqrt{{1\over 1+t_I/t'_I}}-{\sqrt{1\over 1+t'_I/t_I}}}, \ \ \forall x\in I.
\]
Consequently, by using that $\ab{\psi(t_{I},t_{I}')}\le 1/\sqrt{2}$ for all $I$ and Cauchy-Schwarz inequality
\begin{eqnarray*}
Z(N,t,t')&=& \sum_{I\in m\vee m'}\psi(t_{I},t'_{I})\pa{N(I)-\E\cro{N(I)}}\\
&=& \sum_{I\in m\vee m'}\psi(t_{I},t'_{I})\pa{\sqrt{N(I)}-\sqrt{\E\cro{N(I)}}}\pa{\sqrt{N(I)}+\sqrt{\E\cro{N(I)}}}\\
&=& \sum_{I\in m\vee m'}\psi(t_{I},t'_{I})\pa{\sqrt{N(I)}-\sqrt{\E\cro{N(I)}}}^{2}\\
&&\ \ +2 \sum_{I\in m\vee m'}\psi(t_{I},t'_{I})\sqrt{\E(N(I))}\pa{\sqrt{N(I)}-\sqrt{\E\cro{N(I)}}}\\
&\le& {\X^{2}(m\vee m')\over \sqrt{2}}+2\cro{\sum_{I\in m\vee m'}\psi^{2}(t_{I},t'_{I})\E(N(I))}^{1/2}\X(m\vee m')\\
&=& {\X^{2}(m\vee m')\over \sqrt{2}}+2\cro{\int \psi^{2}(t,t',x)sd\mu}^{1/2}\X(m\vee m')
\end{eqnarray*}
By arguing as in Section~\ref{preuves-ex}, we have that 
\[
\int_{\X}\psi^{2}(t,t',x)sd\mu\le 3\pa{H^{2}(s,t)+H^{2}(s,t')}
\]
and thus, by using that $w^{2}(t,t',y)\ge y^{2}$ and $w^{2}(t,t',y)\ge \pa{H^{2}(s,t)+H^{2}(s,t')}^{1/2}y$, we derive
\begin{eqnarray*}
\sup_{(t,t')\in S_{m}\times S_{m'}}{Z(N,t,t')\over w^{2}(t,t',y)} &\le& {\X^{2}(m\vee m')\over \sqrt{2}y^{2}}+2\sqrt{3}{\X(m\vee m')\over y}\\
&\le& {2\sqrt{6}+1\over \sqrt{2}}\pa{{\X^{2}(m\vee m')\over y^{2}}\vee{ \X(m\vee m')\over y}}.
\end{eqnarray*}
Since $z\in(0,1)$,
\begin{eqnarray*}
\ac{\sup_{(t,t')\in S_{m}\times S_{m'}}{Z(N,t,t')\over w^{2}(t,t',y)}\ge z}&\subset &\ac{{\X^{2}(m\vee m')\over y^{2}}\vee{ \X(m\vee m')\over y}\ge {z\sqrt{2}\over2\sqrt{6}+1}}\\
&\subset&\ac{{\X^{2}(m\vee m')\over y^{2}}\ge {2z^{2}\over \pa{2\sqrt{6}+1}^{2}}}
\end{eqnarray*}
and therefore
\begin{eqnarray*}
\P\cro{\sup_{(t,t')\in S_{m}\times S_{m'}}{Z(N,t,t')\over w^{2}(t,t',y)}\ge z}&\le& \P\cro{\X^{2}(m\vee m')\ge {2z^{2}y^{2}\over \pa{2\sqrt{6}+1}^{2}}}.
\end{eqnarray*}
We conclude by using Assumption~\ref{H3} with the fact that under Assumption~\ref{H4},
\[
y^{2}\ge \tau \pa{d_{m}\vee d_{m'}+\xi}\ge 
{\pa{2\sqrt{6}+1}^{2}\over 2z^{2}}\times a\pa{|m\vee m'|+\xi}.
\]

\subsection{Proof of Proposition~\ref{descr1}}\label{sect-Pdisc}
Given an orthonormal basis $\ac{u_{j},\ j=1,\ldots,\overline D}$, consider the $\eta\sqrt{\tau}$-net of $\overline V$ given by~\eref{def-tau}. Then, use Propositions~9 and~12 in Birg\'e~\citeyearpar{MR2219712} (with $\overline \pi=\Pi_{\overline \CC}$, $(M',d)=(\R_{+}^{n},\|\ \|)$, $\M_{0}=\overline \CC$, $T=\T$ and $\lambda=1=\eps$) in order to build a subset $\T'$ of $\Pi_{\overline\CC }\T$ with the properties (8.15) and (8.16) given there. Finally, set $\S=\phi^{-1}\pa{\T'}$. The properties of $\S$ derives from those of $\T'$ given in this Proposition~12.

\subsection{Proof of Proposition~\ref{build}}\label{p-prop5}
In the sequel, $d(.,.)$ denotes the Euclidean distance. By using Proposition~9 in Birg\'e~\citeyearpar{MR2219712}, $\T$ is a $\eta$-net for $\overline V$ satisfying for all $s\in\R^{n}$ and $r\ge  2\eta$, 
\begin{equation}\label{b100}
\ab{\ac{t\in \T,\ d(s,t)\le r}}\le \exp\cro{0.458\overline D\pa{{r\over \eta}}^{2}}.
\end{equation}
Since $V\subset \overline V$, for all $v\in V$ there exists $t\in \T(\eta)$ such that $d(v,t)\le \eta$ and 
\[
d(v,\T')\le d(v,\Pi_{V}t)\le d(v,t)\le \eta
\]
and $\T'$ is therefore an $\eta$-net for $V$. 

Let $s\in \R_{+}^{n}$. Assume that $\ac{t'\in \T',\ d(s,t')\le r}\neq\varnothing$. There exists $t$ in $\T(\eta)$ such that $t'=\pi_{V}t$ and $d(\Pi_{V}t,s)\le r$. For such a $t$,  
\[
d(s,t)\le d(s,\Pi_{V}t)+d(\Pi_{V}t,t)\le r+\eta
\]
and therefore
\begin{eqnarray*}
\ab{\ac{t'\in \T',\ d(s,t')\le r}}&\le& \ab{\ac{t\in \T(\eta),\ d(s,t)\le r+\eta}}\\
&\le& \ab{\ac{t\in \T,\ d(s,t)\le r+\eta}}.
\end{eqnarray*}
Of course the above inequality also holds if $\ac{t'\in \T',\ d(s,t')\le r}=\varnothing$. By using~\eref{b100} and the fact that $r+\eta\le 1.5 r$, we get
\begin{equation}\label{d11}
\ab{\ac{t'\in \T',\ d(s,t')\le r}}\le \exp\cro{1.031\overline D\pa{{r\over \eta}}^{2}}
\end{equation}
and conclude since $\phi$ is is an isometry from $(\R_{+}^{n},H)$ into $(\R_{+}^{n},d)$.

\subsection{Proof of Theorem~\ref{varselec}}
The proof is based on Proposition~\ref{Lucien}. Let us first check that the assumptions of this proposition are fulfilled. We already know from Propositions~\ref{descr1} and~\ref{build} that Assumption~\ref{h-discret} holds. It remains to check Assumption~\ref{bern}.
Under Assumption~\ref{H5}, we have for all $u=\pa{u_{1},\ldots,u_{n}}\in\R^{n}$ such that $\sum_{i=1}^{n}u_{i}^{2}s_{i}\le v^{2}$ and $\max_{i=1}^{n}\ab{u_{i}}\le \gamma$, and  all $\lambda\in (-1/(\beta \gamma),1/(\beta \gamma))$, 
\begin{eqnarray*}
\E\cro{e^{\lambda\sum_{i=1}u_{i}\pa{X_{i}-s_{i}}}}&=& \prod_{i=1}^{n}\E\cro{e^{\lambda u_{i}\pa{X_{i}-s_{i}}}}\\
&\le& \prod_{i=1}^{n}\exp\cro{{\lambda^{2}\sigma u_{i}^{2}s_{i}
\over 2(1-|\lambda|\gamma\beta)}}\\
&\le& \exp\cro{{\lambda^{2}\sigma v^{2}
\over 2(1-|\lambda|\gamma\beta)}}
\end{eqnarray*}
In particular, for all $\lambda\in (0,1/(\beta \gamma))$,  
\begin{equation}\label{cond-minimal}
\E\cro{e^{\lambda\sum_{i=1}u_{i}\pa{X_{i}-s_{i}}}}\le \exp\cro{{\lambda^{2}\sigma v^{2}
\over 2(1-\lambda \gamma\beta)}}.
\end{equation}
Under~\eref{cond-minimal}, we derive from Bernstein's inequality (see Massart~\citeyearpar{MR2319879}, Corollary 2.10), 
\begin{equation}\label{bernfd}
\P\cro{\sum_{i=1}^{n}u_{i}\pa{X_{i}-s_{i}}\ge \xi}\le \exp\cro{-{\xi^{2}\over 2(\sigma v^{2}+\gamma\beta\xi)}}.
\end{equation}
For $t,t'\in\B(s,y)\subset \R_{+}^{n}$, let us now take $u=(\psi(t,t',1),\ldots,\psi(t,t',n))$ (where $\psi$ is defined  by~\eref{def-psi} on $\X=\ac{1,\ldots,n}$) and note that 
\begin{eqnarray*}
\sum_{i=1}^{n}\psi(t,t',i)\pa{X_{i}-s_{i}}&=&Z(N,t,t')\\
\max_{i=1,\ldots,n}\ab{\psi(t,t',i)}&\le& {1\over \sqrt{2}}=\gamma.
\end{eqnarray*}
Besides, by arguing as in Section~\ref{preuves-ex},
\begin{eqnarray*}
 \sum_{i=1}^{n}\psi^{2}(t,t',i)s_{i}&=& {1\over 4}\sum_{i=1}^{n}{\pa{\sqrt{t_{i}}-\sqrt{t_{i}'}}^{2}s_{i}\over (t_{i}+t_{i}')/2}\\
 &\le& 3H^{2}(s,t)+3H^{2}(s,t')\le 6y^{2}=v^{2}.
\end{eqnarray*}
Consequently, we deduce from~\eref{bernfd} that Assumption~\ref{bern} is satisfied with $a=1/(12\sigma)$, $b=1$ and $c=\beta\sqrt{2}/(24\sigma)$ (then $\tau\le 96z^{-2}(\sigma+\beta)$). By applying the Proposition~\ref{Lucien}, we obtain~\eref{pasmal} from which we deduce Theorem~\ref{varselec} since for the Discretizations $P1$ and $P2$, the $\S_{m}$ satisfy 
\[
H^{2}(s,\S_{m})\le 16H^{2}(s,S_{m})+2\tau \eta_{m}^{2},\ \forall m\in\M.
\]

\subsection{Proof of Corollary~\ref{minimax}} Result~$(iii)$ is direct from Theorem~\eref{varselec}. For $(i)$, take with $\M=\ac{m}$, $\Delta_{m}=1$ and $\S_{m}$ a discretization of $S_{m}$ obtained by $P1$ or $P2$. Then, the result follows by applying Theorem~\ref{varselec} denoting $\tilde s$ by $\tilde s_{m}$. For~$(iii)$,  consider  the collection of models $S_{m}$ described to handle Problem~
\ref{pb10} and discretized them by applying $P1$. Apply Theorem~\ref{varselec} and take $\tilde F$ as any element of $\VV_{\hat m}$ such that $\sqrt{\tilde s}=(\tilde F(x_{1}),\ldots,\tilde F(x_{n}))$. We obtain that for all $F\in B^{\alpha}_{p,\infty}$
\[
\E\cro{n^{-1}H^{2}\pa{s,\tilde{s}}}=\E\cro{{1\over n}\sum_{i=1}^{n}\pa{F(x_{i})-\tilde F(x_{i})}^{2}}\le C\inf_{J\ge 0}\ac{R^{2}2^{-2J\alpha}+{2^{J}\over n}}
\]
and the result follows by optimizing with respect to $J$.

\subsection{Proof of Theorem~\ref{borneinf}}
In the sequel, $\rho(P,Q)$ and $h(P,Q)$ denote the Hellinger affinity and the Hellinger distance between the probabilities $P,Q$. For $\theta\in\Theta^{n}$, $A'(\theta)$ corresponds to the vector $t=(A'(\theta_{1}),\ldots,A'(\theta_{n}))$. We start with the following lemma.

\begin{lemma}\label{lem2}
Assume that Assumptions~\ref{H10} and~\ref{H11} hold. For all $\theta,\theta'\in\Theta^{n}$, $t=A'(\theta)$ and $t'=A'(\theta')$, we  have, 
\[
h^{2}\pa{P_{\theta},P_{\theta'}}\le -\sum_{i=1}^{n}\log\rho\pa{P_{\theta_{i}},P_{\theta'_{i}}}\le 4\kappa H^{2}(t,t').
\]
\end{lemma}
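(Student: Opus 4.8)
The plan is to reduce everything to a one–dimensional statement about the convex log–partition function $A$, since both the affinity and the distance factorize over coordinates. First I would record the elementary inequality $1-x\le-\log x$ for $x\in(0,1]$. Applying it to $x=\rho(P_\theta,P_{\theta'})=\prod_{i=1}^{n}\rho(P_{\theta_i},P_{\theta'_i})$ (the Hellinger affinity of a product measure is the product of the affinities), together with the identity $h^2(P_\theta,P_{\theta'})=1-\rho(P_\theta,P_{\theta'})$ valid for the chosen normalisation, yields the first inequality
\[
h^2(P_\theta,P_{\theta'})\le-\log\prod_{i}\rho(P_{\theta_i},P_{\theta'_i})=-\sum_{i}\log\rho(P_{\theta_i},P_{\theta'_i})
\]
at once. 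It then suffices to prove, coordinate by coordinate, the bound $-\log\rho(P_{\theta_i},P_{\theta'_i})\le 2\kappa(\sqrt{t_i}-\sqrt{t'_i})^2$ and to sum, recalling that $H^2(t,t')=\tfrac12\sum_i(\sqrt{t_i}-\sqrt{t'_i})^2$, so that $\sum_i(\sqrt{t_i}-\sqrt{t'_i})^2=2H^2(t,t')$.

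Fix one coordinate and assume $\theta'<\theta$ (equality being trivial). Substituting the exponential–family density into $\rho=\int\sqrt{dP_\theta\,dP_{\theta'}}$ and using $\int e^{aT}d\nu=e^{A(a)}$ for $a=(\theta+\theta')/2\in\Theta$ gives the exact formula $-\log\rho(P_\theta,P_{\theta'})=\tfrac12\bigl(A(\theta)+A(\theta')\bigr)-A\!\left(\tfrac{\theta+\theta'}{2}\right)$. Writing this second difference of the convex $A$ as an iterated integral of $A''$ produces the ``tent'' representation
\[
-\log\rho(P_\theta,P_{\theta'})=\frac12\int_{\theta'}^{\theta}A''(w)\,g(w)\,dw,\qquad g(w)=\min(w-\theta',\theta-w),
\]
which I would verify on the quadratic case and in general by Fubini (the weight $g(w)$ is the length of $\{v\le(\theta-\theta')/2:\ |w-c|\le v\}$ with $c=(\theta+\theta')/2$).

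The crux is to pass to the mean parametrisation. Since $A''=\Var(X)>0$ and $A'=\E(X)>0$ on $\Theta$, the map $w\mapsto s:=\sqrt{A'(w)}$ is a strictly increasing change of variable with $A''=2\sqrt{A'}\,(\sqrt{A'})'$, so $A''(w)\,dw=2s\,ds$ and, with $p=\sqrt{t'}$, $q=\sqrt{t}$,
\[
-\log\rho(P_\theta,P_{\theta'})=\int_{p}^{q}s\,g(w(s))\,ds.
\]
Here Assumption~\ref{H11} enters decisively: $A'\le\kappa A''$ is exactly $\frac{dw}{ds}=2s/A''\le 2\kappa/s$, whence $w(s)-\theta'\le 2\kappa\log(s/p)$ and $\theta-w(s)\le 2\kappa\log(q/s)$, so $g(w(s))\le 2\kappa\min\!\bigl(\log(s/p),\log(q/s)\bigr)$. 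It then remains to evaluate
\[
\int_{p}^{q}s\,\min\!\left(\log\tfrac{s}{p},\log\tfrac{q}{s}\right)ds=\frac{(q-p)^2}{4},
\]
obtained by splitting at $s=\sqrt{pq}$ and computing the two elementary integrals, whereupon the $\log(q/p)$ terms cancel and leave $(q^2-2pq+p^2)/4$. Combining gives $-\log\rho(P_\theta,P_{\theta'})\le\tfrac{\kappa}{2}(\sqrt t-\sqrt{t'})^2\le 2\kappa(\sqrt t-\sqrt{t'})^2$, and summing over coordinates closes the argument (in fact with a factor $4$ to spare, which is why the stated constant $4\kappa$ is comfortable).

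The main obstacle is precisely this middle passage. A naive estimate such as $g(w)\le(\theta-\theta')/2$ followed by Cauchy--Schwarz is too lossy: it fails exactly when $A''$ is very unevenly spread over $[\theta',\theta]$. One must instead exploit Assumption~\ref{H11} not merely to trade $A'$ against $A''$ pointwise, but to control the \emph{location} $w(s)$ through the logarithmic bounds above; the exact cancellation in the final integral is what produces the clean constant and prevents the blow-up that would otherwise occur when $\rho(P_{\theta_i},P_{\theta'_i})\to0$.
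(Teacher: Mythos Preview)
Your argument is correct and in fact sharper than the paper's: you obtain $-\log\rho(P_{\theta_i},P_{\theta'_i})\le\tfrac{\kappa}{2}(\sqrt{t_i}-\sqrt{t'_i})^2$, whereas the paper only reaches $2\kappa(\sqrt{t_i}-\sqrt{t'_i})^2$, which explains the factor~$4$ you noticed.

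The two routes diverge after the common formula $-\log\rho=\tfrac12\bigl(A(\theta)+A(\theta')\bigr)-A\bigl(\tfrac{\theta+\theta'}{2}\bigr)$ and its expression as a double integral of $A''$. The paper stays in the natural parametrisation and uses an algebraic trick: it rewrites $A''(v)$ as $\bigl(\sqrt{A'}\bigr)'(v)\,\bigl(\sqrt{A'}\bigr)'(u)\cdot 4\sqrt{A'(v)A'(u)}/A''(u)$, bounds $\sqrt{A'(v)A'(u)}\le A'(u)$ by monotonicity (since $v\le u$), invokes Assumption~\ref{H11} once to get $A'(u)/A''(u)\le\kappa$, and then crudely enlarges the integration domain to the square $[\theta',\theta]^2$ so that the double integral decouples into $\bigl(\sqrt{t}-\sqrt{t'}\bigr)^2$. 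Your approach instead Fubinises to the tent representation $\tfrac12\int A''(w)\min(w-\theta',\theta-w)\,dw$, changes variable to $s=\sqrt{A'(w)}$, and uses Assumption~\ref{H11} as a differential inequality $dw/ds\le 2\kappa/s$ to control the tent weight by $2\kappa\min\bigl(\log(s/p),\log(q/s)\bigr)$; the resulting integral evaluates exactly to $(q-p)^2/4$. The paper's argument is shorter and purely pointwise, at the cost of the two lossy steps (the geometric--mean bound and the domain enlargement); yours is more analytic, keeps track of where $A''$ sits relative to the midpoint, and recovers the exact constant through the cancellation of the $\log(q/p)$ terms.
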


\begin{proof}
Since 
\begin{eqnarray*}
h^{2}\pa{P_{\theta},P_{\theta'}}&=&1-\rho\pa{P_{\theta},P_{\theta'}}= 1-\exp\cro{\sum_{i=1}^{n}\log\rho\pa{P_{\theta_{i}},P_{\theta'_{i}}}}\\
&\le& -\sum_{i=1}^{n}\log\rho\pa{P_{\theta_{i}},P_{\theta'_{i}}},
\end{eqnarray*}
it suffices to show that 
\[
-\sum_{i=1}^{n}\log\rho\pa{P_{\theta_{i}},P_{\theta'_{i}}}\le 4\kappa H^{2}(t,t')=4\kappa\sum_{i=1}^{n}H^{2}(t_{i},t'_{i}).
\]
By summing over $i$, it is enough to show the inequality for $n=1
$, what we shall do. Let $\theta,\theta'$ in $\Theta$ such that $t=A'(\theta)$ and $t'=A'(\theta')$. With no loss of generality, we may assume that $\theta'<\theta$ and set $\delta=(\theta-\theta')/2$. The Hellinger affinity between $P_{\theta}$ and $P_{\theta'}$ is given by
\[
\rho(P_{\theta},P_{\theta'})=\exp\cro{-\pa{{A(\theta)+A(\theta')\over 2}-A\pa{{\theta+\theta'\over 2}}}}
\]
and therefore
\begin{eqnarray*}
-\log\rho\pa{P_{\theta},P_{\theta'}}&=& {A(\theta)+A(\theta')\over 2}-A\pa{{\theta+\theta'\over 2}}\\
&=& {1\over 2}\cro{A(\theta)+A(\theta-2\delta)-2A\pa{\theta-\delta}}\\
&=& {1\over 2}\int_{\theta-\delta}^{\theta}\pa{A'(u)-A'(u-\delta)}du\\
&=& {1\over 2}\int_{\theta-\delta}^{\theta}\cro{\int_{u-\delta}^{u}A''(v)dv}du.
\end{eqnarray*}
Since $t,t'\in\R_{+}\setminus\ac{0}$ and since under Assumption~\ref{H11}, $A',A''$ do not vanish on $[\theta',\theta]$, for all $u\in[\theta-\delta,\theta]$ and $v\in[u-\delta,u]$
\begin{eqnarray*}
A''(v)&=& {A''(v)\over 2\sqrt{A'(v)}}{A''(u)\over 2\sqrt{A'(u)}}{4\sqrt{A'(v)A'(u)}\over A''(u)}\\
&\le& {A''(v)\over 2\sqrt{A'(v)}}{A''(u)\over 2\sqrt{A'(u)}}{4A'(u)\over A''(u)}\\
&\le& 4\kappa {A''(v)\over 2\sqrt{A'(v)}}{A''(u)\over 2\sqrt{A'(u)}}.
\end{eqnarray*}
giving thus,
\begin{eqnarray*}
-\log\rho\pa{P_{\theta},P_{\theta'}}&\le&2\kappa \int_{\theta-\delta}^{\theta}\cro{\int_{u-\delta}^{u} {A''(v)\over 2\sqrt{A'(v)}}{A''(u)\over 2\sqrt{A'(u)}}dv}du\\
&\le& 2\kappa \int_{\theta'}^{\theta}\cro{\int_{\theta'}^{\theta} {A''(v)\over 2\sqrt{A'(v)}}{A''(u)\over 2\sqrt{A'(u)}}dv}du\\
&=& 2\kappa \pa{\int_{\theta'}^{\theta}{A''(v)\over 2\sqrt{A'(v)}}dv}^{2}\\
&=& 2\kappa\pa{\sqrt{A'(\theta)}-\sqrt{A'(\theta')}}^{2}\\
&=&2\kappa\pa{\sqrt{t}-\sqrt{t'}}^{2}
\end{eqnarray*}
\end{proof}

The proof of Theorem~\ref{borneinf} is based on Assouad's Lemma (see Assouad~\citeyearpar{MR777600}), more precisely on the version given by Theorem~2.10 in Tsybakov~\citeyearpar{MR2013911}. In the sequel, $u_{1},\ldots,u_{\overline D}$ denote an orthonormal basis of $\overline V$ and $d(\eps,\eps')$ the Hamming distance between two elements $\eps$ and $\eps'$ of  $\ac{0,1}^{\overline D}$, that is  
\[
d(\eps,\eps')=\sum_{j=1}^{\overline D}\1_{\eps_{j}\neq \eps_{j}'}.
\]
Let $r\in\RR$. There exists $t^{0}\in S$ such that the Euclidean ball (of $\overline V$) centered at $u_{0}=\sqrt{t^{0}}$ with radius $r$ is contained in $\CC$. Consequently, there exists $\beta_{1},\ldots,\beta_{\overline D}$ such that $\sqrt{t^{0}}=\sum_{j=1}^{\overline D}\beta_{j}u_{j}$ and that for all $\eps\in\ac{0,1}^{\overline D}$ one can find $t^{\eps}\in S$ such that  
\[
\sqrt{t^{\eps}}=\sum_{j=1}^{\overline D}\pa{\beta_{j}+r\eps_{j}}u_{j}.
\]
Note that the for all $\eps,\eps'\in\ac{0,1}^{\overline D}$, 
\[
2H^{2}(t^{\eps},t^{\eps'})=\norm{\sqrt{t^{\eps}}-\sqrt{t^{\eps'}}}^{2}=r^{2}d(\eps,\eps').
\]
Besides, 
\begin{eqnarray*}
\inf_{\hat s}\sup_{s\in S}\E_{s}\cro{H^{2}\pa{s,\hat s}}&\ge & \inf_{\hat s}\sup_{\eps\in \ac{0,1}^{\overline D}}\E_{t^{\eps}}\cro{H^{2}\pa{t^{\eps},\hat s}}\\
&\ge & \inf_{\hat \eps}\sup_{\eps\in \ac{0,1}^{\overline D}}\E_{t^{\eps}}\cro{H^{2}\pa{t^{\eps},t^{\hat \eps}}}\\
&=& {r^{2}\over 2} \inf_{\hat \eps}\sup_{\eps\in \ac{0,1}^{\overline D}}\E_{t^{\eps}}\cro{d^{2}\pa{\eps,\hat \eps}},
\end{eqnarray*}
where the two last infimum run among all estimators $\hat \eps$ based on the observations $(X_{1},\ldots,X_{n})$ with values in $\ac{0,1}^{\overline D}$. Theorem~2.10 in Tsybakov~\citeyearpar{MR2013911} asserts that 
\[
\inf_{\hat \eps}\sup_{\eps\in \ac{0,1}^{\overline D}}\E_{t^{\eps}}\cro{d^{2}\pa{\eps,\hat \eps}}\ge {\overline D\over 2}\pa{1-\sqrt{\alpha(2-\alpha)}}
\]
provided that for all $\eps,\eps'$ such that $d(\eps,\eps')=1$, 
\[
h^{2}\pa{P_{\theta^{\eps}},P_{\theta^{\eps'}}}\le \alpha<1,
\]
where $\theta^{\eps}$ and $\theta^{\eps'}$ corresponds to the parameters in $\Theta$ associated to $t^{\eps}$ ad $t^{\eps'}$ respectively. By taking $\alpha=1/2$ and using Lemma~\ref{lem2}, for all $\eps,\eps'$ such that $d(\eps,\eps')=1$
\[
h^{2}\pa{P_{\theta^{\eps}},P_{\theta^{\eps'}}}\le 4\kappa H^{2}(t^{\eps},t^{\eps'})\le 2\kappa r^{2}\le {1\over 2}=\alpha.
\]
Therefore, 
\[
\inf_{\hat s}\sup_{s\in S}\E_{s}\cro{H^{2}\pa{s,\hat s}}\ge {1-\sqrt{3}/2\over 4}\ \overline Dr^{2},
\]
which concludes the proof since $r$ is arbitrary in $\RR$. 

\subsection{Proof of Theorem~\ref{th-reg}}\label{sect-th-reg}
The proof is based on Theorem~\ref{main2}. Let us first check that the assumptions of this theorem hold. The marginal of $X$ being given by $s=q_{f}$, we already know from Proposition~\ref{Examples} that Assumption~\ref{bern} holds true for Example~\ref{ex-marginal} with $a=1/6$, $b=1$ and $c=\sqrt{2}/36$ (which leads to the value $\tau=50z^{-2}$). In order to check Assumption~\ref{h-discret}, we distinguish between Collections $(\C1)$ and $(\C2)$. 

\paragraph{Case of Collection $(\C1)$}
For any $m\in\M$,  by using Propositions~9 and~12 in Birg\'e~\citeyearpar{MR2219712} with $\overline \pi=\Pi_{\overline \CC}$, $(M',d)=(\R^{n},\|\ \|)$, $\M_{0}=\overline \CC$, $\lambda=1=\eps$ and $T=\T_{m}$ where $\T_{m}=\T$ is given by~\eref{def-tau} as a discretization of the linear space $\overline V_{m}$, we obtain from the Discretization $P1$ a discretized subset $\T_{m}'$ of $\Pi_{\overline \CC}\overline V_{m}$  satisfying the properties (8.15) and (8.16) given Birg\'e~\citeyearpar{MR2219712}, that is for all $g\in\R^{n}$ and $r\ge \overline \eta_{m}/2$
\[
\ab{\ac{t\in\T_{m}',\ \norm{g-t}\le r\sqrt{\tau}}}\le \exp\cro{4.2\overline D_{m}\pa{{r\over \overline\eta_{m}}}^{2}}
\]
and $d(g,\T_{m}')\le 4d(g,\T_{m})$ (where $d(.,.)$ denotes the Euclidean distance). Since  for all $g,g'\in\overline \CC$, $H^{2}(q_{g},q_{g'})\ge \underline R^{2}\norm{g-g'}^{2}$, for all $g\in\overline\CC$ and $x\ge 2\eta_{m}=\underline R\overline \eta_{m}/2$ 
\begin{eqnarray*}
\ab{\S_{m}\cap \B(q_{g},x\sqrt{\tau})}&\le& \ab{\ac{g\in \T_{m}',\ \|g-g'\|\le \underline R^{-1}x\sqrt{\tau}}}\\
&\le&\exp\cro{4.2\underline R^{-2}\overline D_{m}\pa{{x\over \overline \eta_{m}}}^{2}}\\
&\le& \exp\cro{{\overline D_{m}\over 3}\pa{{x\over \eta_{m}}}^{2}}.
\end{eqnarray*}
Consequently, since $\eta_{m}=2\overline D_{m}/3$ for all $m$, the family $\ac{S_{m},\ m\in\M}$ satisfies Assumption~\ref{h-discret} with $M=1$. 

\paragraph{Case of Collection $(C2)$}
By using~\eref{d11} and arguing as in the previous case, for all $g\in\R^{n}$ and $x\ge 2\eta_{m}=2\underline R \overline \eta_{m}$,
\begin{eqnarray*}
\ab{\S_{m}\cap \B(q_{g},x\sqrt{\tau})}&\le& \ab{\ac{g'\in \T_{m}',\ \norm{g-g'}\le \underline R^{-1}x\sqrt{\tau}}}\\
&\le&\exp\cro{1.031\underline R^{-2}\overline D_{m}\pa{{x\over \overline \eta_{m}}}^{2}}\\
&\le& \exp\cro{1.031\overline D_{m}\pa{{x\over \eta_{m}}}^{2}}\\
\end{eqnarray*}
and we deduce similarly that the family $\ac{S_{m},\ m\in\M}$ satisfies Assumption~\ref{h-discret} with $M=1$.

Let us now finish the proof of Theorem~\ref{th-reg}. Since in both cases, Assumption~\ref{h-discret} holds, we can apply Theorem~\ref{main2} and get that 
\[
\E\cro{H^{2}\pa{q_{f},q_{\tilde f}}}\le C\inf_{\lambda\in\Lambda}\ac{\E\cro{H^{2}\pa{q_{f},q_{\hat f_{\lambda}}}+A(q_{\hat f_{\lambda}},\S)}}
\]
for some $C$ depending on $\Sigma$, $z$ only. Under Assumption~\ref{reg}, we derive 
that 
\begin{eqnarray*}
\lefteqn{\E\cro{\norm{f-\tilde f}^{2}}}\\
&\le& C{\overline R^{2}\over \underline R^{2}}\inf_{\lambda\in\Lambda}\ac{\E\cro{\norm{f-\hat f_{\lambda}}^{2}+\inf_{m\in\M}\pa{\inf_{t\in\T_{m}'}\norm{\hat f_{\lambda}-t}^{2}+\tau\overline R^{-2}\pa{\overline D_{m}\vee \Delta_{m}}}}}
\end{eqnarray*}

In the case of Collection $\C_{1}$, we conclude by using that for $m=\hat m(\lambda)$
\begin{eqnarray*}
\lefteqn{\inf_{m\in\M}\pa{\inf_{t\in\T_{m}'}\norm{\hat f_{\lambda}-t}^{2}+\tau\overline R^{-2}\pa{\overline D_{m}\vee \Delta_{m}}}}\\
&\le& 2\norm{f-\hat f_{\lambda}}^{2}+2\inf_{t\in\T_{m}'}\norm{f-t}^{2}+\tau\overline R^{-2}\pa{\overline D_{m}\vee \Delta_{m}}\\
&\le& 2\norm{f-\hat f_{\lambda}}^{2}+32\inf_{t\in\T_{m}}\norm{f-t}^{2}+\tau\overline R^{-2}\pa{\overline D_{m}\vee \Delta_{m}}\\
&\le& 2\norm{f-\hat f_{\lambda}}^{2}+64\inf_{t\in\overline V_{m}}\norm{f-t}^{2}+64\tau\overline \eta_{m}^{2}+\tau\overline R^{-2}\pa{\overline D_{m}\vee \Delta_{m}}\\
&\le& 66 \norm{f-\hat f_{\lambda}}^{2}+\tau \pa{64\times 11\underline R^{-2}+\overline R^{-2}}\pa{\overline D_{m}\vee \Delta_{m}}.
\end{eqnarray*}

For collection $\C_{2}$, we conclude by using that $\T_{m}'$ is a $\overline \eta_{m}\sqrt{\tau}$-net for $\overline \CC_{m}$ and that for $m=\hat m(\lambda)$, 
\begin{eqnarray*}
\lefteqn{\inf_{m\in\M}\pa{\inf_{t\in\T_{m}'}\norm{\hat f_{\lambda}-t}^{2}+\tau\overline R^{-2}\pa{\overline D_{m}\vee \Delta_{m}}}}\\
&\le& 2\norm{\hat f_{\lambda}-\Pi_{\overline \CC_{m}}\hat f_{\lambda}}^{2}+2\tau \overline \eta_{m}^{2}+\tau\overline R^{-2}\pa{\overline D_{m}\vee \Delta_{m}}.
\end{eqnarray*}


\subsection*{Acknowledgement} The author is thankful to Lucien Birg\'e for his careful reading of the paper and his thoughtful comments.


\begin{thebibliography}{}

\bibitem[Antoniadis et~al., 2001]{MR1897045}
Antoniadis, A., Besbeas, P., and Sapatinas, T. (2001).
\newblock Wavelet shrinkage for natural exponential families with cubic
  variance functions.
\newblock {\em Sankhy\=a Ser. A}, 63(3):309--327.
\newblock Special issue on wavelets.

\bibitem[Antoniadis and Sapatinas, 2001]{MR1859411}
Antoniadis, A. and Sapatinas, T. (2001).
\newblock Wavelet shrinkage for natural exponential families with quadratic
  variance functions.
\newblock {\em Biometrika}, 88(3):805--820.

\bibitem[Assouad, 1983]{MR777600}
Assouad, P. (1983).
\newblock Deux remarques sur l'estimation.
\newblock {\em C. R. Acad. Sci. Paris S\'er. I Math.}, 296(23):1021--1024.

\bibitem[Baraud, 2000]{MR1777129}
Baraud, Y. (2000).
\newblock Model selection for regression on a fixed design.
\newblock {\em Probab. Theory Related Fields}, 117(4):467--493.

\bibitem[Baraud and Birg{\'e}, 2009]{MR2449129}
Baraud, Y. and Birg{\'e}, L. (2009).
\newblock Estimating the intensity of a random measure by histogram type
  estimators.
\newblock {\em Probab. Theory Related Fields}, 143(1-2):239--284.

\bibitem[Baraud et~al., 2009]{BaGiHu2009}
Baraud, Y., Giraud, C., and Huet, S. (2009).
\newblock Gaussian model selection with an unknown variance.
\newblock {\em Ann. Statist.}, 37(2):630--672.

\bibitem[Barron et~al., 1999]{MR1679028}
Barron, A., Birg{\'e}, L., and Massart, P. (1999).
\newblock Risk bounds for model selection via penalization.
\newblock {\em Probab. Theory Related Fields}, 113(3):301--413.

\bibitem[Barron and Cover, 1991]{MR1111806}
Barron, A.~R. and Cover, T.~M. (1991).
\newblock Minimum complexity density estimation.
\newblock {\em IEEE Trans. Inform. Theory}, 37(4):1034--1054.

\bibitem[Birg{\'e}, 1983]{MR722129}
Birg{\'e}, L. (1983).
\newblock Approximation dans les espaces m\'etriques et th\'eorie de
  l'estimation.
\newblock {\em Z. Wahrsch. Verw. Gebiete}, 65(2):181--237.

\bibitem[Birg{\'e}, 1984a]{MR762855}
Birg{\'e}, L. (1984a).
\newblock Stabilit\'e et instabilit\'e du risque minimax pour des variables
  ind\'ependantes \'equidistribu\'ees.
\newblock {\em Ann. Inst. H. Poincar\'e Probab. Statist.}, 20(3):201--223.

\bibitem[Birg{\'e}, 1984b]{MR764150}
Birg{\'e}, L. (1984b).
\newblock Sur un th\'eor\`eme de minimax et son application aux tests.
\newblock {\em Probab. Math. Statist.}, 3(2):259--282.

\bibitem[Birg{\'e}, 2006]{MR2219712}
Birg{\'e}, L. (2006).
\newblock Model selection via testing: an alternative to (penalized) maximum
  likelihood estimators.
\newblock {\em Ann. Inst. H. Poincar\'e Probab. Statist.}, 42(3):273--325.

\bibitem[Birg{\'e}, 2007]{Birge-Poisson}
Birg{\'e}, L. (2007).
\newblock Model selection for {P}oisson process.
\newblock In {\em Asymptotics: particles, processes and inverse problems,
  Festschrift for Piet Groeneboom}, number~55, pages 32--64. E.~Cator,
  G.~Jongbloed, C.~Kraaikamp, R.~Lopuha\"a and J.~Wellner, eds. IMS Lecture
  Notes -- Monograph Series.

\bibitem[Birg{\'e}, 2008]{Birge-L2}
Birg{\'e}, L. (2008).
\newblock Model selection for density estimation with {L}2-loss.
\newblock Technical report, arXiv:0808.1416.

\bibitem[Birg{\'e} and Massart, 2000]{MR1848840}
Birg{\'e}, L. and Massart, P. (2000).
\newblock An adaptive compression algorithm in {B}esov spaces.
\newblock {\em Constr. Approx.}, 16(1):1--36.

\bibitem[Birg{\'e} and Massart, 2001]{MR1848946}
Birg{\'e}, L. and Massart, P. (2001).
\newblock Gaussian model selection.
\newblock {\em J. Eur. Math. Soc. (JEMS)}, 3(3):203--268.

\bibitem[Blanchard et~al., 2004]{MR2177922}
Blanchard, G., Sch{\"a}fer, C., and Rozenholc, Y. (2004).
\newblock Oracle bounds and exact algorithm for dyadic classification trees.
\newblock In {\em Learning theory}, volume 3120 of {\em Lecture Notes in
  Comput. Sci.}, pages 378--392. Springer, Berlin.

\bibitem[Borovkov, 1998]{MR1712750}
Borovkov, A.~A. (1998).
\newblock {\em Mathematical statistics}.
\newblock Gordon and Breach Science Publishers, Amsterdam.
\newblock Translated from the Russian by A. Moullagaliev and revised by the
  author.

\bibitem[Bunea et~al., 2007]{MR2351101}
Bunea, F., Tsybakov, A.~B., and Wegkamp, M.~H. (2007).
\newblock Aggregation for {G}aussian regression.
\newblock {\em Ann. Statist.}, 35(4):1674--1697.

\bibitem[Candes and Tao, 2007]{MR2382644}
Candes, E. and Tao, T. (2007).
\newblock The {D}antzig selector: statistical estimation when {$p$} is much
  larger than {$n$}.
\newblock {\em Ann. Statist.}, 35(6):2313--2351.

\bibitem[Castellan, 2000a]{Castellan00}
Castellan, G. (2000a).
\newblock Density estimation via exponential model selection.
\newblock Technical report, 00.25 Universit{\'e} Paris XI, Orsay.

\bibitem[Castellan, 2000b]{Castellan1}
Castellan, G. (2000b).
\newblock S\'election d'histogrammes \`a l'aide d'un crit\`ere de type akaike.
\newblock {\em C.R.A.S.}, 330:729--732.

\bibitem[Catoni, 2004]{Catoni04}
Catoni, O. (2004).
\newblock Statistical learning theory and stochastic optimization.
\newblock In {\em Lecture notes from the 31st Summer School on Probability
  Theory held in Saint-Flour, July 8--25, 2001}. Springer-Verlag, Berlin.

\bibitem[DeVore and Lorentz, 1993]{DeVore}
DeVore, R. and Lorentz, G. (1993).
\newblock {\em Constructive approximation}.
\newblock Springer-Verlag.

\bibitem[Donoho, 1997]{MR1474073}
Donoho, D.~L. (1997).
\newblock C{ART} and best-ortho-basis: a connection.
\newblock {\em Ann. Statist.}, 25(5):1870--1911.

\bibitem[Efron et~al., 2004]{MR2060166}
Efron, B., Hastie, T., Johnstone, I., and Tibshirani, R. (2004).
\newblock Least angle regression.
\newblock {\em Ann. Statist.}, 32(2):407--499.
\newblock With discussion, and a rejoinder by the authors.

\bibitem[Giraud, 2009]{CGiraud09}
Giraud, C. (2009).
\newblock Mixing least-square estimators when the variance is unknown.
\newblock {\em Bernoulli, to appear}.

\bibitem[Juditsky and Nemirovski, 2000]{MR1792783}
Juditsky, A. and Nemirovski, A. (2000).
\newblock Functional aggregation for nonparametric regression.
\newblock {\em Ann. Statist.}, 28(3):681--712.

\bibitem[Kolaczyk and Nowak, 2004]{MR2060167}
Kolaczyk, E.~D. and Nowak, R.~D. (2004).
\newblock Multiscale likelihood analysis and complexity penalized estimation.
\newblock {\em Ann. Statist.}, 32(2):500--527.

\bibitem[Le~Cam, 1973]{MR0334381}
Le~Cam, L. (1973).
\newblock Convergence of estimates under dimensionality restrictions.
\newblock {\em Ann. Statist.}, 1:38--53.

\bibitem[Le~Cam, 1975]{MR0395005}
Le~Cam, L. (1975).
\newblock On local and global properties in the theory of asymptotic normality
  of experiments.
\newblock In {\em Stochastic processes and related topics ({P}roc. {S}ummer
  {R}es. {I}nst. {S}tatist. {I}nference for {S}tochastic {P}rocesses, {I}ndiana
  {U}niv., {B}loomington, {I}nd., 1974, {V}ol. 1; dedicated to {J}erzy
  {N}eyman)}, pages 13--54. Academic Press, New York.

\bibitem[Leung and Barron, 2006]{MR2242356}
Leung, G. and Barron, A.~R. (2006).
\newblock Information theory and mixing least-squares regressions.
\newblock {\em IEEE Trans. Inform. Theory}, 52(8):3396--3410.

\bibitem[Lugosi and Nobel, 1996]{MR1394983}
Lugosi, G. and Nobel, A. (1996).
\newblock Consistency of data-driven histogram methods for density estimation
  and classification.
\newblock {\em Ann. Statist.}, 24(2):687--706.

\bibitem[Massart, 2007]{MR2319879}
Massart, P. (2007).
\newblock {\em Concentration inequalities and model selection}, volume 1896 of
  {\em Lecture Notes in Mathematics}.
\newblock Springer, Berlin.
\newblock Lectures from the 33rd Summer School on Probability Theory held in
  Saint-Flour, July 6--23, 2003. With a foreword by Jean Picard.

\bibitem[Nemirovski, 2000]{MR1775640}
Nemirovski, A. (2000).
\newblock Topics in non-parametric statistics.
\newblock In {\em Lectures on probability theory and statistics
  ({S}aint-{F}lour, 1998)}, volume 1738 of {\em Lecture Notes in Math.}, pages
  85--277. Springer, Berlin.

\bibitem[Reynaud-Bouret, 2003]{MR1981635}
Reynaud-Bouret, P. (2003).
\newblock Adaptive estimation of the intensity of inhomogeneous {P}oisson
  processes via concentration inequalities.
\newblock {\em Probab. Theory Related Fields}, 126(1):103--153.

\bibitem[Tibshirani, 1996]{MR1379242}
Tibshirani, R. (1996).
\newblock Regression shrinkage and selection via the lasso.
\newblock {\em J. Roy. Statist. Soc. Ser. B}, 58(1):267--288.

\bibitem[Tsybakov, 2003]{tsy03}
Tsybakov, A.~B. (2003).
\newblock Optimal rates of aggregation.
\newblock In {\em Proceedings of the 16th {A}nnual {C}onference on {L}earning
  {T}heory ({COLT}) and {7}th {A}nnual {W}orkshop on {K}ernel {M}achines},
  pages 303--313. {L}ecture {N}otes in {A}rtificial {I}ntelligence 2777,
  Springer-Verlag, Berlin.

\bibitem[Tsybakov, 2004]{MR2013911}
Tsybakov, A.~B. (2004).
\newblock {\em Introduction \`a l'estimation non-param\'etrique}, volume~41 of
  {\em Math\'ematiques \& Applications (Berlin) [Mathematics \& Applications]}.
\newblock Springer-Verlag, Berlin.

\bibitem[Wegkamp, 2003]{Wegkamp03}
Wegkamp, M. (2003).
\newblock Model selection in nonparametric regression.
\newblock {\em Ann. Statist.}, 31:252--273.

\bibitem[Yang, 1999]{Yang99}
Yang, Y. (1999).
\newblock Model selection for nonparametric regression.
\newblock {\em Statist. Sinica}, 9:475--499.

\bibitem[Yang, 2000a]{MR1790617}
Yang, Y. (2000a).
\newblock Combining different procedures for adaptive regression.
\newblock {\em J. Multivariate Anal.}, 74(1):135--161.

\bibitem[Yang, 2000b]{MR1762904}
Yang, Y. (2000b).
\newblock Mixing strategies for density estimation.
\newblock {\em Ann. Statist.}, 28(1):75--87.

\bibitem[Yang, 2001]{MR1946426}
Yang, Y. (2001).
\newblock Adaptive regression by mixing.
\newblock {\em J. Amer. Statist. Assoc.}, 96(454):574--588.

\bibitem[Yang and Barron, 1999]{MR1742500}
Yang, Y. and Barron, A. (1999).
\newblock Information-theoretic determination of minimax rates of convergence.
\newblock {\em Ann. Statist.}, 27(5):1564--1599.

\end{thebibliography}
\bibliographystyle{apalike}

\end{document}